\documentclass[a4paper,11pt]{article}
\pagestyle{headings}
\usepackage[normalem]{ulem}
\normalem


\usepackage{amsmath,amsthm,amssymb,enumerate}
\usepackage[T1]{fontenc}
\usepackage{gensymb}
\usepackage[square,sort&compress,comma,numbers]{natbib}
\usepackage[sc]{mathpazo}
\usepackage[mathscr]{euscript}
\usepackage{multirow} 
\usepackage{newtxtext,newtxmath}
\usepackage{subfig}
\usepackage{graphicx}
\usepackage{epstopdf}
\usepackage{cancel}
\usepackage[margin=3cm]{geometry}
\usepackage{tikz}
\usepackage{caption}
\usetikzlibrary{shapes,calc}
\usepackage{verbatim}
\usepackage{mathrsfs}
\usepackage{accents}
\usepackage[utf8]{inputenc}
\usepackage{lipsum}


\usetikzlibrary{decorations.pathmorphing}
\usetikzlibrary{decorations.pathreplacing}
\usetikzlibrary{positioning}
\usetikzlibrary{shapes}
\usetikzlibrary{arrows}
\usetikzlibrary{patterns}
\usetikzlibrary{fadings}
\usetikzlibrary{plotmarks}
\usetikzlibrary{calc}
\usetikzlibrary{intersections}
\usepackage{paralist}
%
\usepackage{bbm}
\usepackage{latexsym}           
\usepackage{enumerate}
\usepackage{enumitem}

\setlist{noitemsep, topsep=0.8ex, partopsep=0pt
, leftmargin=3em}
\setlist[1]{labelindent=\parindent}

\newlist{axioms}{enumerate}{1}
\setlist[axioms]{font=\bfseries}

\newlist{alphenum}{enumerate}{1}
\setlist[alphenum]{label=\textbf{(\alph*)}, leftmargin=4em}

\newlist{alphienum}{enumerate}{1}
\setlist[alphienum]{label=\textit{(\alph*)}}

\newlist{romanenum}{enumerate}{1}
\setlist[romanenum]{label=\textit{(\roman*)}}
\newtheorem*{Proof of}{\it Proof of Theorem \ref{th:existence:contslevel}}

\newlist{romaninenum}{enumerate*}{1}
\setlist[romaninenum]{label=\textit{(\roman*)}}
\usepackage[vlined]{algorithm2e}
\SetKwIF{If}{ElseIf}{Else}{if}{}{else if}{else}{endif}
\SetKwFor{For}{for}{}{endfor}
\usepackage[noabbrev, capitalise]{cleveref}
\usepackage{tabu}
\tabulinesep=0.5ex
\crefname{equation}{\unskip}{\unskip}
\creflabelformat{equation}{#2(#1)#3}
\usepackage[vlined]{algorithm2e}

\SetFuncSty{textsc}
\SetKwFunction{frun}{Run}
\SetKwHangingKw{arun}{\frun}
\SetKwFunction{fset}{Set}
\SetKwHangingKw{aset}{\fset}
\SetKwFunction{ffind}{Find}
\SetKwHangingKw{afind}{\ffind}
\SetKwFunction{fselect}{Select}
\SetKwHangingKw{aselect}{\fselect}
\SetKwFunction{fcompute}{Compute}
\SetKwHangingKw{acompute}{\fcompute}
\SetKwFunction{fsolve}{Solve}
\SetKwHangingKw{asolve}{\fsolve}
\SetKwFunction{fupdate}{Update}
\SetKwHangingKw{aupdate}{\fupdate}
\SetKwFunction{festimate}{Estimate}
\SetKwHangingKw{aestimate}{\festimate}
\SetKwFunction{fmark}{Mark}
\SetKwHangingKw{amark}{\fmark}
\SetKwFunction{fbreak}{Break}
\SetKwHangingKw{abreak}{\fbreak}
\SetKwFunction{frefine}{Refine}
\SetKwHangingKw{arefine}{\frefine}
\SetKwFunction{compute}{Compute}
\SetKwFunction{set}{Set}

{\algorithm}%
{\endalgorithm}

%





\hfuzz1pc 


\newtheorem{thm}{Theorem}[section]

\newtheorem{lem}[thm]{Lemma}

\theoremstyle{definition}

\theoremstyle{remark}
\newtheorem{rem}{Remark}[section]

\numberwithin{equation}{section}


\newcommand{\cE}{\mathcal E}

\newcommand{\cV}{\mathcal V}
\newcommand{\cT}{\mathcal T}

 %
 %


\newcommand{\vket}{von K\'{a}rm\'{a}n equations}
\newcommand{\vk}{von K\'{a}rm\'{a}n}

\newcommand{\fl}{\;\text{ for all }}

\newcommand{\half}{\frac{1}{2}}
\newcommand{\trinl}{\ensuremath{|\!|\!|}}
\newcommand{\trinr}{\ensuremath{|\!|\!|}}

\newcommand{\ds}{{\rm\,ds}}




\newcommand{\M}{\text{M}}
\newcommand{\pw}{\text{pw}}


\newcommand{\T}{\mathcal{T}}





\def \R{{{\Bbb R}}}

\allowdisplaybreaks

\def\R{\mathbb{R}}

\def\O{\Omega}

\def\cV{\mathcal{V}}

\def\cT{\mathcal{T}}
\def\cE{{\mathcal{E}}}

\setlist[itemize,2]{label={$\star$}}

\title{{Morley Finite Element Method for the \\von K\'{a}rm\'{a}n Obstacle Problem}}

\author{
	C. Carstensen\footnote{Department of Mathematics, HU Berlin, 10099 Berlin, Germany. Distinguished Visiting Professor, Department of Mathematics, IIT Bombay, Powai, Mumbai 400076, India. (cc@math.hu-berlin.de),}, 
S. Gaddam, N. Nataraj, A. K. Pani{\footnote{Department of Mathematics, IIT Bombay, Powai, Mumbai 400076, India. ({gsharat,neela,akp}@math.iitb.ac.in),}} \, and D. Shylaja\footnote{IITB-Monash Research Academy, IIT Bombay, Powai, Mumbai 400076, India. (devikas@math.iitb.ac.in).}
}

\date{\today}
\begin{document}
\maketitle
\begin{abstract}
This paper focusses on the \vket\, for the moderately large deformation of a very thin plate with the convex obstacle constraint leading to a coupled system of semilinear fourth-order obstacle problem and motivates its nonconforming Morley finite element approximation. The first part establishes the well-posedness of the \vk\, obstacle problem and also discusses the uniqueness of the solution under an {\em a priori} and an {\em a posteriori} smallness condition on the data. The second part of the article discusses the regularity result of Frehse from 1971 and combines it with the regularity of the solution on a polygonal domain. The third part of the article shows an {\em a priori} error estimate for optimal convergence rates for the Morley finite element approximation to the von K\'{a}rm\'{a}n obstacle problem for small data. The article concludes with numerical results that illustrates the requirement of smallness assumption on the data for optimal convergence rate.
\end{abstract}

\section{Introduction}
{\bf Short history of related work.} The \textbf{\vket}\,\cite{ciarlet1997mathematical} model the bending of very thin elastic plates through a system of fourth-order semi-linear elliptic equations; cf. \cite{berger1968karman, ciarlet1997mathematical, knightly1967existence} and references therein for the existence of solutions, regularity, and bifurcation phenomena. The papers \cite{brezzi1978finite, miyoshi1976mixed, quarteroni1979hybrid, reinhart1982numerical,mallik2016nonconforming, brenner2017c, carstensen2018priori,carstensen2017nonconforming} study the approximation and error bounds for regular solutions to \vket\, using conforming, mixed, hybrid, Morley, $C^0$ interior penalty and discontinuous Galerkin finite element methods (FEMs). 


\medskip
The \textbf{obstacle problem} is a prototypical example for a variational inequality and arises in contact mechanics, option pricing, and fluid flow problems. The location of the free boundary is not known {\em a priori} and forms a part of the solution procedure. For the theoretical and numerical aspects of variational inequalities, see \cite{glowinski2008lectures, kinderlehrer1980introduction}. A unified convergence analysis for the fourth-order linear two-sided obstacle problem of clamped Kirchhoff plates in \cite{brenner2012finite,brenner2013morley,brenner2012quadratic} studies $C^1$ FEMs, $C^0$ interior penalty methods, and classical nonconforming FEMs on convex domains and, analyse the $C^0$ interior penalty and the Morley FEM on polygonal domains. 

\medskip
The \textbf{obstacle problem for \vket}\, with a nonlinearity together with a free boundary offers additional difficulties. The obstacle problem in \cite{muradova2007unilateral,yau1992obstacle,miersemann1992stability} concerns a different plate model with continuation, spectral, and complementarity methods, while the papers \cite{ohtake1980analysisI, ohtake1980analysisII} study conforming penalty FEM.


\medskip
The present paper is the first on the fourth-order semilinear obstacle problem of a (very thin) \vk\, plate. The article derives existence, uniqueness (under smallness assumption on data) and regularity results of the \vk\, obstacle problem. 
Nonconforming FEMs appear to be more attractive than the classical $C^1$ conforming FEMs, so this article suggests the Morley FEM to approximate the \vk\, obstacle problem and derives an optimal order {\em a priori} error estimate with the best approximation plus a linear perturbation. 


\medskip
\noindent
{\bf Problem Formulation.} Given an obstacle $\chi \in H^2(\O)$ with $\max\chi(\partial \Omega):=\max_{x\in\partial \Omega}\chi(x) <0$, define the non-empty, closed, and convex subset  
$$K:=\{\varphi \in H^2_0(\O): \varphi \ge \chi \mbox{ a.e. in } \Omega\}$$
of $H^2_0(\O)$ in a bounded polygonal domain $\O\subset\R^2.$ The Hessian $D^2$ and \vk\, bracket $[\varphi_1, \varphi_2]:=\varphi_{1xx}\varphi_{2yy}+\varphi_{1yy}\varphi_{2xx}-2\varphi_{1xy}\varphi_{2xy}$ with partial derivatives $(\bullet)_{xy}:=\partial^2 (\bullet)/\partial x\partial y$ etc. Define for $ \varphi_1, \varphi_2, \varphi_3 \in H^2_0(\O)$ the weak forms
\begin{align}\label{defnaandb}
a(\varphi_1,\varphi_2):=(D^2\varphi_1,D^2\varphi_2)_{L^2(\O)} \,{\rm{ and }}\,\,
b(\varphi_1,\varphi_2,\varphi_3):=-\frac{1}{2}([\varphi_1,\varphi_2],\varphi_3)_{L^2(\O)}
\end{align} 
with the $L^2(\O)$ inner product $(\bullet,\bullet)_{L^2(\O)}$. It is well established \citep[Corollary 2.3]{brenner2017c} and follows from symmetry of the \vk\, bracket $[\bullet,\bullet]$ that $b:H^2_0(\O)^3\rightarrow\R$ is symmetric with respect to all the three arguments. The weak formulation of the \vk\, obstacle problem seeks $(u,v) \in K \times H^2_0(\O)$ such that
\begin{subequations}\label{wform}
	\begin{align}
	& a(u,u-\varphi_1)+ 2b(u,v,u-\varphi_1)\le (f,u-\varphi_1)_{L^2(\O)}   \; \; \fl \varphi_1\in K\label{wforma},\\
	& a(v,\varphi_2)-b(u,u,\varphi_2)   =0       \; \;     \fl \varphi_2 \in H^2_0(\O)\label{wformb}.
	\end{align}
\end{subequations}

\medskip
\noindent
{\bf Results and overview.} A smallness assumption on the data is derived in Section 2 to show that \eqref{wform} is well-posed. The regularity results of Section 3 establish that any solution $(u,v)$ to \eqref{wform} satisfies $u,v\in H^2_0(\O)\cap H^{2+\alpha}(\O)\cap C^2(\O)$ for the index $1/2<\alpha\le 1$ with $\alpha=\min\{\alpha',1\}$ and the index $\alpha'$ of elliptic regularity \cite{blum1980boundary} of the biharmonic operator in a polygonal domain $\O$. Section 4 introduces the Morley FEM and discusses the well-posedness of the discrete problem with an {\em a priori} and an {\em a posteriori} smallness condition on the data for global uniqueness. Section 5 derives {\em a priori} energy norm estimates of optimal order $\alpha$ for the Morley FEM under the smallness assumption on the data that guarantees global uniqueness of the minimizer on the continuous level. The article concludes with numerical results that illustrates the requirement of smallness assumption on the data for optimal convergence rate.


\medskip
\noindent
{\bf Notation.} Standard notation on Lebesgue and Sobolev spaces and their norms apply throughout the paper. For $s>0$ and $1 \le p \le \infty$, let $|\bullet|_{s}$ and $\|\bullet\|_{s}$ (resp. $|\bullet|_{s,p}$ and $\|\bullet\|_{s,p}$ ) denote the semi-norm and norm on $H^{s}(\Omega)$ (resp. $W^{s,p} (\Omega)$); $\|\bullet\|_{-s}$ denotes the norm in $H^{-s}(\Omega)$. The standard $L^2$ inner product and norm are denoted by $(\bullet,\bullet)_{L^2(\Omega)}$ and $\|\bullet\|_{L^2(\Omega)}$. The triple norm $\trinl\bullet\trinr :=\|D^2 \bullet \|_{L^2(\Omega)}$ is the energy norm defined by the Hessian and $\trinl \bullet \trinr_{\pw}:= \|D^2_{\pw} \bullet \|_{L^2(\Omega)}$ is its piecewise version with the piecewise Hessian $D^2_{\pw}$, $[\bullet,\bullet]_{\rm pw}$ denotes the piecewise version of the \vk\, bracket $[\bullet,\bullet]$ with respect to an underlying (non-displayed) triangulation. $H^{-2}(\O)$ is the dual space of the Hilbert space $(H^2_0(\O),\trinl \bullet \trinr)$. The elliptic regularity index $1/2<\alpha\le 1$ is determined by the interior angles of the domain $\O$ \cite{blum1980boundary} and is the same throughout this paper. The notation $A \lesssim B$ abbreviates $A\leq CB$ for some positive generic constant $C$ which depends on $\trinl u\trinr, \trinl v\trinr, \| u\|_{2+\alpha}, \| v\|_{2+\alpha}, \|f\|_{L^2(\Omega)}$; $A\approx B$ abbreviates $A\lesssim B\lesssim A$.
\newpage
\section{Well-posedness}\label{sec.wellposed}
This section establishes the well-posedness of the problem \eqref{wform}. 
The existence of a solution to \eqref{wform} follows with the direct method in the calculus of variations. The subsequent bound applies often in this paper and is based on Sobolev embedding. Let $C_{\rm S}$ denote the Sobolev constant in the Sobolev embedding $H^2_0(\O)\hookrightarrow C(\overline{\Omega})$ and let $C_{\rm F}$ denote the Friedrichs constant with \begin{align}\label{ctsembedding}
\|v\|_{L^{\infty}(\Omega)}\le C_{\rm S}\|v\|_{H^2_0(\O)}\quad \text{and}\quad\|v\|_{L^{2}(\Omega)}\le C_{\rm F}\trinl v\trinr \text{  for all  } v\in H^2_0(\O).
\end{align}
\begin{lem}[Bound for $b(\bullet,\bullet,\bullet)$ \cite{brezzi1978finite}]\label{Bound b}\noindent The trilinear form $b(\bullet,\bullet,\bullet)$ from \eqref{defnaandb} satisfies, for all $ \varphi_1, \varphi_2, \varphi_3 \in H^2_0(\O)$, that $b(\varphi_1,\varphi_2,\varphi_3)\le \trinl\varphi_1 \trinr\trinl\varphi_2 \trinr\| \varphi_3\|_{L^\infty(\O)}\le C_{\rm S} \trinl\varphi_1 \trinr\trinl\varphi_2 \trinr\trinl \varphi_3\trinr.$
\end{lem}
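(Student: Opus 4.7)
The plan is to combine the definition of $b$ in \eqref{defnaandb} with Hölder's inequality in the pairing $(L^1(\Omega),L^\infty(\Omega))$, to estimate the resulting $L^1$ norm of the von Kármán bracket by Cauchy--Schwarz, and then to invoke the Sobolev embedding \eqref{ctsembedding} for the second inequality.

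First, I would unfold the definition in \eqref{defnaandb} and separate $\varphi_3$ in $L^\infty$ by Hölder, giving $|b(\varphi_1,\varphi_2,\varphi_3)| = \frac{1}{2}\bigl|\int_\Omega [\varphi_1,\varphi_2]\,\varphi_3\,dx\bigr| \le \frac{1}{2}\,\|[\varphi_1,\varphi_2]\|_{L^1(\Omega)}\,\|\varphi_3\|_{L^\infty(\Omega)}$.

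Next, expanding $[\varphi_1,\varphi_2]=\varphi_{1xx}\varphi_{2yy}+\varphi_{1yy}\varphi_{2xx}-2\varphi_{1xy}\varphi_{2xy}$, I would apply the triangle inequality and then Cauchy--Schwarz in $L^2(\Omega)$ to each of the three product terms, and finally a discrete Cauchy--Schwarz in $\R^3$ with the weights $(1,1,\sqrt{2})$ on the triples $(\|\varphi_{1xx}\|_{L^2},\|\varphi_{1yy}\|_{L^2},\sqrt{2}\,\|\varphi_{1xy}\|_{L^2})$ and $(\|\varphi_{2yy}\|_{L^2},\|\varphi_{2xx}\|_{L^2},\sqrt{2}\,\|\varphi_{2xy}\|_{L^2})$, whose Euclidean lengths both equal $\|D^2\varphi_i\|_{L^2(\Omega)} = \trinl\varphi_i\trinr$ for $i=1,2$. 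This yields $\|[\varphi_1,\varphi_2]\|_{L^1(\Omega)}\le \trinl\varphi_1\trinr\,\trinl\varphi_2\trinr$, and combined with the preceding display it proves the first claimed inequality (in fact with an extra factor $\tfrac{1}{2}$ that is absorbed in $\le$).

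For the second inequality I would simply invoke the Sobolev embedding constant from \eqref{ctsembedding}, $\|\varphi_3\|_{L^\infty(\Omega)}\le C_{\rm S}\trinl\varphi_3\trinr$, and multiply through the previous bound. No step here is a genuine obstacle; the only detail that needs care is the choice of the weights $(1,1,\sqrt{2})$ in the discrete Cauchy--Schwarz, arranged so that each of the two factors collapses exactly to the Hessian norm $\|D^2\varphi_i\|_{L^2(\Omega)}$ instead of to a strictly larger norm (e.g., $\sqrt{3}\,\max_{jk}\|\varphi_{i,jk}\|_{L^2}$).
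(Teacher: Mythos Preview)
Your argument is correct. The paper does not give its own proof of this lemma but merely cites \cite{brezzi1978finite}, so there is nothing to compare against; your H\"older--Cauchy--Schwarz route is precisely the standard argument, and the weighted discrete Cauchy--Schwarz with weights $(1,1,\sqrt 2)$ is exactly the right device to recover the Frobenius norm $\trinl\varphi_i\trinr=\|D^2\varphi_i\|_{L^2(\Omega)}$ on each factor. As you observe, the computation in fact yields the sharper bound $|b(\varphi_1,\varphi_2,\varphi_3)|\le \tfrac12\trinl\varphi_1\trinr\trinl\varphi_2\trinr\|\varphi_3\|_{L^\infty(\Omega)}$, which of course implies the statement as written.
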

\noindent{For all $\xi\in H^2_0(\O),$ Lemma \ref{Bound b} implies $b(\xi,\xi,\bullet)\in H^{-2}(\O)$. 
Define  $G:H^2_0(\O)\rightarrow H^2_0(\O)$ by 
\begin{align}\label{defnG}
a(G(\varphi),\psi)=b(\varphi,\varphi,\psi)\text{  for all  } \varphi,\psi\in H^2_0(\O).
\end{align} 
This means $G(\xi)$ is the Riesz representation of the linear bounded functional $b(\xi,\xi,\bullet)$ in the Hilbert space $(H^2_0(\O),a(\bullet,\bullet))$. 
Consider the minimizer $u$ of the functional $j(\xi)$ for $\xi\in K$ and 
\begin{align}
j(\xi):= \half\trinl \xi\trinr^2\: +\half \trinl G(\xi)\trinr^2\: -(f,\xi)_{L^2(\O)}. \label{minprb}
\end{align}

\noindent\\
The equivalence of \eqref{wform} with \eqref{minprb}, for $K=H^2_0(\O)$, is established in \cite[Theorem 5.8.3]{ciarlet1997mathematical}. Analogous arguments also establish the equivalence, for any non-empty, closed, and convex subset $K$ of $H^2_0(\O)$, so the proof is omitted. This implies  that, to prove the existence of a solution to \eqref{wform}, it is sufficient to prove the existence of a minimizer to \eqref{minprb}.

\begin{thm}[\rm Existence]\label{th:existence:contslevel}
Given $(f,\chi)\in L^2(\O)\times H^2(\O)$ with $\max \chi(\partial\Omega)<0$, there exists a minimizer of $j(\bullet)$ in $K$
; each minimizer $u$ and $v:=G(u)$ solves \eqref{wform}.
\end{thm}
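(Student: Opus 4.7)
The plan is to apply the direct method of the calculus of variations to $j$ on $K$, then invoke the equivalence between the minimization problem \eqref{minprb} and the variational inequality \eqref{wform} already recalled (a routine adaptation of \cite[Theorem~5.8.3]{ciarlet1997mathematical} to the convex constraint set $K$).

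First, since $\tfrac12\trinl G(\xi)\trinr^2\ge 0$, Cauchy--Schwarz and the Friedrichs inequality in \eqref{ctsembedding} yield
\[
j(\xi)\ge \tfrac12\trinl\xi\trinr^2 - C_{\rm F}\|f\|_{L^2(\O)}\trinl\xi\trinr,
\]
so $j$ is coercive and bounded below on the (non-empty) convex set $K$. Any minimizing sequence $(\xi_n)\subset K$ is therefore bounded in $H^2_0(\O)$, and reflexivity produces a subsequence (not relabeled) with $\xi_n\rightharpoonup u$ in $H^2_0(\O)$; since $K$ is convex and norm-closed, $u\in K$.

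The decisive step is to show weak-to-weak continuity of $G$, equivalently $b(\xi_n,\xi_n,\psi)\to b(u,u,\psi)$ for every fixed $\psi\in H^2_0(\O)$. Writing $\eta_n:=\xi_n-u\rightharpoonup 0$, I split $b(\xi_n,\xi_n,\psi)-b(u,u,\psi)=2b(u,\eta_n,\psi)+b(\eta_n,\eta_n,\psi)$. Lemma~\ref{Bound b} exhibits $\eta\mapsto b(u,\eta,\psi)$ as a bounded linear functional on $H^2_0(\O)$, so the first term vanishes as $n\to\infty$. For the quadratic term, the distributional identity $\partial_i\cof(D^2\eta_n)_{ij}=0$ together with $\psi|_{\partial\O}=0$ permits the integration by parts
\[
\int_\O[\eta_n,\eta_n]\psi\,dx=-\int_\O\cof(D^2\eta_n)\nabla\eta_n\cdot\nabla\psi\,dx,
\]
whose modulus is controlled by $\|D^2\eta_n\|_{L^2(\O)}\|\nabla\eta_n\|_{L^4(\O)}\|\nabla\psi\|_{L^4(\O)}$. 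The second factor tends to zero by the compact embedding $H^2_0(\O)\hookrightarrow W^{1,4}(\O)$ available in two dimensions, while the first and third factors are bounded. Hence $a(G(\xi_n),\psi)\to a(G(u),\psi)$, so $G(\xi_n)\rightharpoonup G(u)$ in $H^2_0(\O)$.

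Both squared-norm contributions to $j$ are then weakly lower semicontinuous, and $(f,\xi_n)_{L^2(\O)}\to(f,u)_{L^2(\O)}$ by weak convergence, so $j(u)\le\liminf_n j(\xi_n)=\inf_K j$ and $u$ minimizes $j$ on $K$. The equivalence mentioned at the outset then yields that $(u,v)$ with $v:=G(u)$ solves \eqref{wform}. The principal obstacle is precisely this weak continuity of $G$: the trilinear form $b$ is only continuous in the full $H^2_0$-topology, so a naive weak passage in $b(\xi_n,\xi_n,\psi)$ fails; the integration-by-parts rewriting that trades one second derivative for two first derivatives is what permits the compact Sobolev embedding $H^2_0\hookrightarrow W^{1,4}$ to close the argument.
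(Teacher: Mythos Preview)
Your proof is correct and follows the same overall direct-method architecture as the paper, but the key step---weak sequential continuity of $G$---is handled differently. The paper exploits the full symmetry of $b$ to rewrite $a(G(u_{n_k}),\varphi)=b(u_{n_k},\varphi,u_{n_k})$ for a \emph{smooth} test function $\varphi\in\mathcal D(\O)$; then $[u_{n_k},\varphi]\rightharpoonup[u,\varphi]$ weakly in $L^2(\O)$ (the second derivatives of $\varphi$ are fixed) while $u_{n_k}\to u$ strongly in $L^2(\O)$ by the compact embedding $H^2_0(\O)\hookrightarrow L^2(\O)$, and the weak--strong pairing passes to the limit; a density argument then upgrades to all of $H^2_0(\O)$. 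You instead split off the linear part and treat the quadratic remainder $b(\eta_n,\eta_n,\psi)$ via the cofactor (null-Lagrangian) identity, trading one Hessian factor for a gradient and invoking the compact embedding $H^2_0(\O)\hookrightarrow W^{1,4}(\O)$. Your route works directly for every $\psi\in H^2_0(\O)$ without a density step and makes the compactness mechanism very transparent; the paper's route is slightly more elementary in that it only needs the $L^2$ compact embedding and the trilinear symmetry already recorded, at the cost of the extra density argument.
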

\begin{proof}
Given $\xi \in K$, the definition of $j(\bullet)$ in \eqref{minprb} and the Cauchy-Schwarz inequality lead to
$$\trinl \xi\trinr^2+ \trinl G(\xi)\trinr^2-2\|f\|_{-2}\trinl \xi\trinr \le 2j(\xi).
$$
This implies the lower bound
$$-\infty<-\|f\|^2_{-2}=\min_{t\ge 0}\big(t^2-2t\|f\|_{-2}\big)\le 2j(\xi) \fl\xi \in K.$$
Consequently, there exists a sequence $(u_n)_{n \in \mathbb{N}}$ in $K$ such that
$$j(u_n) \rightarrow \beta:=\inf_{\xi \in K}j(\xi)\in \R.$$
The Cauchy-Schwarz and the Young inequalities lead to
$$\trinl u_n\trinr^2+ 2\trinl G(u_n)\trinr^2-4\|f\|_{-2}^2 \le 2\trinl u_n\trinr^2+2\trinl G(u_n)\trinr^2-4\|f\|_{-2}\trinl u_n\trinr \\
\le 4j(u_n).$$
Consequently, $\trinl u_n\trinr^2+ 2\trinl G(u_n)\trinr^2\le 4j(u_n)+4\|f\|_{-2}^2 $. Since $j(u_n)$ is convergent, the sequences $(u_n)_{n \in \mathbb{N}}$ and $(G(u_n))_{n \in \mathbb{N}}$ are bounded in $H^2_0(\O)$. Hence, there exist $u, w \in H^2_0(\O)$ and a weakly convergent subsequence $(u_{n_k})_{k \in \mathbb{N}}$ such that
$$u_{n_k}\rightharpoonup u \text{ and } G(u_{n_k})\rightharpoonup w \text{ weakly in } H^2_0(\O)\text{ as } k\rightarrow\infty.$$
The non-empty closed convex set $K$ of $H^2_0(\O)$ is sequentially weakly closed and so $u \in K$. Since $u_{n_k}$ converges weakly to $u$ in $H^2_0(\O),$ this implies
\begin{equation}\label{j.f}
\int_{\Omega}fu_{n_k} \rightarrow \int_{\Omega}fu \mbox{ as } k \rightarrow \infty. \end{equation}
The compact embedding of $H^2_0(\O)$ in $L^{2}(\O)$ implies $u_{n_k} \rightarrow u$ in $L^{2}(\Omega).$ Further for a given $\varphi \in \mathcal{D}(\O)$, the definition of $G(\bullet)$ in \eqref{defnG}, the symmetry of $b(\bullet,\bullet, \bullet)$ with respect to second and third arguments, and the weak convergence of $u_{n_k}\rightharpoonup u$ in $H^2_0(\O)$ lead to
\begin{align*}
a(G(u_{n_k}),\varphi)=b(u_{n_k},\varphi,u_{n_k}) &\rightarrow b(u,\varphi,u)=a(G(u),\varphi)\text{ as } k\rightarrow\infty.
\end{align*}
Since $\varphi$ is arbitrary in the dense set $\mathcal{D}(\O)$ of $H^2_0(\O)$, this means $G(u_{n_k}) \rightharpoonup G(u)$ weakly in $H^2_0(\O)$ as $k \rightarrow \infty$. 
The sequentially weak lower semi-continuity of the norm $\trinl \bullet\trinr$ shows $j(u) \le \liminf_k j(u_{n_k}).$ This and $\lim_{k \rightarrow \infty}j(u_{n_k})=\beta \le\liminf_k j(u_{n_k})$ prove that $u$ minimizes $j$ in $K.$ By the definition of $G(\bullet)$ in \eqref{defnG}, $(u,G(u))$ solves \eqref{wform}. This concludes the proof.
\end{proof}

\noindent Theorem \ref{thm.contsdep} establishes an {\em a priori} bound and the uniqueness of the solution to \eqref{wform}. Recall the Sobolev (resp. Friedrichs) constant $C_{\rm S}$(resp. $C_{\rm F}$) from \eqref{ctsembedding}. 

	\begin{thm}[{\em a priori} bound and uniqueness] \label{thm.contsdep}
		Given $(f,\chi)\in L^2(\O)\times H^2(\O)$ with $\max \chi(\partial\Omega)<0$, there exists a positive constant $C(\chi)$ that depends only on $\chi$ such that any solution $(u,v)$ to \eqref{wform} satisfies $(a)$-$(b).$\\
		$(a)$ $\frac{1}{2}\trinl u\trinr^2+\trinl v\trinr^2\le N^2(f,u):= 2j(u)+2C^2_{\rm F}\|f\|^2 \le M^2(f,\chi):= C(\chi)+3C^2_{\rm F}\|f\|^2_{L^2(\O)}$.\\
		$(b)$ If $C^2_{\rm S}(\frac{1}{2}\trinl u\trinr^2+\trinl v\trinr^2)< (\sqrt{2}-1)^2$, then $(u,v)$ is the only solution to \eqref{wform}. 
	\end{thm}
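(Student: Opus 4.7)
\emph{Part (a).} Since $v=G(u)$, the definition of $j$ in \eqref{minprb} reads $2j(u)=\trinl u\trinr^{2}+\trinl v\trinr^{2}-2(f,u)_{L^{2}(\Omega)}$. The Cauchy--Schwarz inequality $(f,u)_{L^{2}(\Omega)}\le C_{\rm F}\|f\|_{L^{2}(\Omega)}\trinl u\trinr$ combined with Young's inequality $2C_{\rm F}\|f\|_{L^{2}(\Omega)}\trinl u\trinr\le\tfrac{1}{2}\trinl u\trinr^{2}+2C_{\rm F}^{2}\|f\|_{L^{2}(\Omega)}^{2}$ rearranges to the lower bound $\tfrac{1}{2}\trinl u\trinr^{2}+\trinl v\trinr^{2}\le 2j(u)+2C_{\rm F}^{2}\|f\|_{L^{2}(\Omega)}^{2}=N^{2}(f,u)$. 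For the upper bound, $\max\chi(\partial\Omega)<0$ permits a cutoff/lift construction of some $\tilde\chi\in H^{2}_{0}(\Omega)$ with $\tilde\chi\ge\chi$, so that $\tilde\chi\in K$. The minimality $j(u)\le j(\tilde\chi)$, the consequence $\trinl G(\tilde\chi)\trinr\le C_{\rm S}\trinl\tilde\chi\trinr^{2}$ of Lemma~\ref{Bound b}, and a further Young estimate for $(f,\tilde\chi)_{L^{2}(\Omega)}$ yield $2j(\tilde\chi)\le 2\trinl\tilde\chi\trinr^{2}+C_{\rm S}^{2}\trinl\tilde\chi\trinr^{4}+C_{\rm F}^{2}\|f\|_{L^{2}(\Omega)}^{2}$. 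Defining $C(\chi):=2\trinl\tilde\chi\trinr^{2}+C_{\rm S}^{2}\trinl\tilde\chi\trinr^{4}$ completes the chain $N^{2}(f,u)\le M^{2}(f,\chi)$.

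\emph{Part (b).} Let $(u',v')$ be any other solution and set $e:=u-u'$, $\eta:=v-v'$. Testing \eqref{wforma} for $u$ with $\varphi_{1}=u'\in K$ and \eqref{wforma} for $u'$ with $\varphi_{1}=u\in K$ and summing cancels the load terms and leaves $\trinl e\trinr^{2}\le 2[b(u',v',e)-b(u,v,e)]$. A trilinear expansion with $u'=u-e$, $v'=v-\eta$ rewrites the right-hand side as $-2b(u,\eta,e)-2b(e,v,e)+2b(e,\eta,e)$. Subtracting the two copies of \eqref{wformb} and testing with $\psi=\eta$ gives $\trinl\eta\trinr^{2}=2b(u,e,\eta)-b(e,e,\eta)$. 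The full symmetry of $b$ now forces the key identities $2b(u,\eta,e)=2b(u,e,\eta)=\trinl\eta\trinr^{2}+b(e,e,\eta)$ and $2b(e,\eta,e)=2b(e,e,\eta)$; substitution produces the clean identity
\begin{equation*}
\trinl e\trinr^{2}+\trinl\eta\trinr^{2}\le -2b(e,v,e)+b(e,e,\eta).
\end{equation*}

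Lemma \ref{Bound b} bounds the right-hand side by $2C_{\rm S}\trinl v\trinr\trinl e\trinr^{2}+C_{\rm S}\trinl e\trinr^{2}\trinl\eta\trinr$. A weighted Cauchy--Schwarz inequality with weights calibrated to the coupled norm $\tfrac{1}{2}\trinl u\trinr^{2}+\trinl v\trinr^{2}$ (pairing $(\trinl u\trinr/\sqrt{2},\trinl v\trinr)$ against $(\sqrt{2}\trinl\eta\trinr,\trinl e\trinr)$), combined with the companion estimate $\trinl\eta\trinr\le C_{\rm S}(2\trinl u\trinr+\trinl e\trinr)\trinl e\trinr$ drawn from $\trinl\eta\trinr^{2}=2b(u,e,\eta)-b(e,e,\eta)$, and Young's inequality to absorb the residual cross term $C_{\rm S}\trinl e\trinr^{2}\trinl\eta\trinr$, converts the above identity into $(1-A)(\trinl e\trinr^{2}+\trinl\eta\trinr^{2})\le 0$ with $A<1$ under the hypothesis $C_{\rm S}^{2}(\tfrac{1}{2}\trinl u\trinr^{2}+\trinl v\trinr^{2})<(\sqrt{2}-1)^{2}$. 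This forces $e=0$, and then $\trinl\eta\trinr^{2}\le 0$ forces $\eta=0$, so $(u,v)=(u',v')$. The main technical obstacle is calibrating the weighted Cauchy--Schwarz absorption sharply: the $\tfrac{1}{2}$ weight on $\trinl u\trinr^{2}$ in the smallness constant is dictated precisely by the cancellation $2b(u,\eta,e)\leftrightarrow\trinl\eta\trinr^{2}$ that merged the two second-order estimates into one, and the threshold $(\sqrt{2}-1)^{2}$ emerges as the exact value for which the coefficient $A$ crosses $1$ after this absorption.
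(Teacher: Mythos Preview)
Your Part~(a) is correct and essentially matches the paper: both use that any solution is a minimiser of $j$, combine Young's inequality with the Friedrichs bound for the lower inequality, and construct a cutoff comparison function $\tilde\chi=\psi\chi\in K$ together with $\trinl G(\tilde\chi)\trinr\le C_{\rm S}\trinl\tilde\chi\trinr^{2}$ for the upper inequality.

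Part~(b) has a genuine gap. Your ``clean identity'' $\trinl e\trinr^{2}+\trinl\eta\trinr^{2}\le -2b(e,v,e)+b(e,e,\eta)$ is correct, but once you bound $b(e,e,\eta)$ by Lemma~\ref{Bound b} as $C_{\rm S}\trinl e\trinr^{2}\trinl\eta\trinr$, you are stuck with a term that is \emph{cubic} in the unknowns $(\trinl e\trinr,\trinl\eta\trinr)$. No combination of the companion bound $\trinl\eta\trinr\le C_{\rm S}(2\trinl u\trinr+\trinl e\trinr)\trinl e\trinr$, weighted Cauchy--Schwarz, and Young can reduce this to $(1-A)(\trinl e\trinr^{2}+\trinl\eta\trinr^{2})\le 0$ with a constant $A$ controlled only by $C_{\rm S}^{2}(\tfrac12\trinl u\trinr^{2}+\trinl v\trinr^{2})$: the substitution always reintroduces $\trinl e\trinr^{3}$ or $\trinl e\trinr^{4}$, and the specific Cauchy--Schwarz pairing you write down does not even match the structure of your right-hand side (which contains no $\trinl u\trinr$ factor).

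The fix is to run your elimination in the opposite direction. From $\trinl\eta\trinr^{2}=2b(u,e,\eta)-b(e,e,\eta)$ write $b(e,e,\eta)=2b(u,e,\eta)-\trinl\eta\trinr^{2}$ and substitute \emph{this} into your clean identity. The cubic term disappears and you obtain
\[
\tfrac12\trinl e\trinr^{2}+\trinl\eta\trinr^{2}\le b(u,e,\eta)-b(e,v,e)\le C_{\rm S}\trinl u\trinr\,\trinl e\trinr\,\trinl\eta\trinr+C_{\rm S}\trinl v\trinr\,\trinl e\trinr^{2},
\]
which is exactly the paper's inequality \eqref{boundedelta}. From here a $\lambda$-weighted Young inequality with $\gamma<\lambda<\gamma^{-1}-2$ (which exists precisely when $\gamma:=C_{\rm S}(\tfrac12\trinl u\trinr^{2}+\trinl v\trinr^{2})^{1/2}<\sqrt{2}-1$) forces both coefficients in front of $\trinl e\trinr^{2}$ and $\trinl\eta\trinr^{2}$ to be strictly positive, giving $e=\eta=0$.
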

	\begin{proof}
		Since $u$ is the minimizer of \eqref{minprb}, the Young inequality implies, for any $\varphi\in K$, that
		\begin{align*}
		\trinl u\trinr^2+\trinl G(u)\trinr^2&\le 2j(\varphi)+2(f,u)_{L^2(\O)}\le 2j(\varphi)+2C^2_{\rm F}\|f\|^2_{L^2(\O)}+\frac{1}{2}\trinl u\trinr^2.
		\end{align*}
		This proves for the minimizer $u$ of $j(\bullet)$ that
		\begin{align}
		\frac{1}{2}\trinl u\trinr^2+\trinl G(u)\trinr^2&\le 2j(u)+2C^2_{\rm F}\|f\|^2_{L^2(\O)}:=N^2(f,u)\le N^2(f,\varphi).\label{apostest}
		\end{align}
Since $\max \chi(\partial \Omega) <0$, $\{\chi\ge0\}:=\{x\in\Omega:\chi(x)\ge0\}$ is a compact subset of $\Omega$ and there exists an open set $\Omega_+$ around $\{\chi\ge0\}$ such that $\overline{\Omega_+}$ is a compact subset of $\Omega$. Consider the cut-off function $\psi \in \mathcal{D}(\Omega)$ such that $0\le \psi\le1$, $\psi=1$ in $ \{\chi\ge0\},$ ${\rm{supp}}(\psi)\subset \Omega_+,$ and define $\varphi:=\chi\psi\in H^2_0(\O)$. Then, $\varphi \ge \chi$ in $\Omega$, and so $\varphi \in K$. The construction of $\varphi$ ensures that 

\begin{equation}\label{boundvarphi}
\trinl \varphi\trinr_{}=| \varphi |_{H^2( \Omega_+)}\le C(\psi)\|\chi\|_{H^2( \Omega_+)}.
\end{equation}
This inequality, the definition of $G(\bullet),$ and Lemma \ref{Bound b} lead to $$\trinl G(\varphi)\trinr^2_{}=b(\varphi,\varphi,G(\varphi))\le C_{\rm S}\trinl \varphi\trinr^2_{}\trinl G(\varphi)\trinr_{}\le C_{\rm S}C^2(\psi)\trinl G(\varphi)\trinr_{}\|\chi\|^2_{H^2(\Omega_+)}.$$ Consequently, $\trinl G(\varphi)\trinr_{}\le C_{\rm S}C^2(\psi)\|\chi\|^2_{H^2(\Omega_+)}$. An application of the bounds for $\varphi$ and $G(\varphi)$ in \eqref{apostest} concludes the proof of final estimate of part $(a)$ with $C(\chi):=2C^2(\psi)\|\chi\|^2_{H^2(\Omega_+)}+C_{\rm S}^2C^4(\psi)\|\chi\|^4_{H^2(\Omega_+)}$ and $u$ being the minimizer of $j(\bullet)$ implies $N(f,u)\le N(f,\psi\chi)\le M(f,\chi).$ 
\noindent\\
To prove $(b)$, recall the definition of $G(\bullet)$ from \eqref{defnG} and let $(u_1,G(u_1))$ and $(u_2,G(u_2))$ be two solutions to \eqref{wform}. Set $e=u_1-u_2,\,\delta=G(u_1)-G(u_2)$, and choose $u=u_1$, $\varphi_1=u_2$ (respectively, $u=u_2$, $\varphi_1=u_1$) in \eqref{wforma} and add the resulting inequalities to deduce that
\begin{align}
\frac{1}{2}\trinl e \trinr^2\le -b(u_1,G(u_1),e)+b(u_2,G(u_2),e)= -b(e,e,G(u_1))-b(e,\delta,u_2).\label{est.t1}
\end{align}
Elementary algebra with \eqref{wformb}, the definition of $G(\bullet)$ and symmetry of $b(\bullet,\bullet,\bullet)$ with respect to the three variables show
\begin{align}
\trinl \delta \trinr^2=b(u_1,u_1,\delta)-b(u_2,u_2,\delta)=b(e,\delta,u_1)+b(e,\delta,u_2).\label{est.t2}
\end{align}
The combination of \eqref{est.t1}-\eqref{est.t2}, and Lemma \ref{Bound b} lead to
\begin{align}\label{boundedelta}
\frac{1}{2}\trinl e \trinr^2+\trinl \delta \trinr^2&\le b(e,\delta,u_1)-b(e,e,G(u_1))
\le C_{\rm S}\trinl e \trinr\trinl \delta \trinr\trinl u_1\trinr_{}+C_{\rm S}\trinl e \trinr^2\trinl G(u_1)\trinr_{}.
\end{align}
Suppose $0 <\gamma^2:= C^2_{\rm S}(\frac{1}{2}\trinl u\trinr^2+\trinl v\trinr^2) < (\sqrt{2}-1)^2$ and verify $\gamma < \frac{1}{\gamma} -2$. Hence there exist a real $\lambda$ with $\gamma < \lambda< \frac{1}{\gamma} -2$. The inequality \eqref{boundedelta} and a $\lambda$-weighted Young inequality lead to 
\begin{align}\label{est.uniq}
\frac{1}{2}\trinl e \trinr^2+\trinl \delta \trinr^2&\le \gamma (\sqrt{2}\trinl e \trinr \trinl \delta \trinr+ \trinl e \trinr^2 ) \le \gamma ((1+ \frac{\lambda}{2})\trinl e \trinr^2 + \lambda^{-1}  \trinl \delta \trinr^2 ).
\end{align}
This is equivalent to $(\frac{1}{2} - \gamma (1+\frac{\lambda}{2}))   \trinl e \trinr^2 + (1- \lambda^{-1} \gamma) \trinl \delta \trinr^2 \le 0$. Since each of the two previous factors in the lower bound are positive, this proves $\trinl e \trinr=0=\trinl \delta \trinr$ and it concludes the proof of uniqueness. 
\end{proof}

\begin{rem}[{\em a priori} and {\em a posteriori} criteria for uniqueness]\label{smallnessassumption}
	The {\em a priori} smallness assumption on data $C_{\rm S}M(f,\chi)<\sqrt{2}-1$ implies $C_{\rm S} N(f,u)<\sqrt{2}-1$ and so global uniqueness of the solution to \eqref{wform}. The first condition is {\em a priori}, but given the constant $\sqrt{2}-1$, $f$, and $\chi,$ $M(f,\chi)$ is hard to quantify. The second condition $C_{\rm S} N(f,u)<\sqrt{2}-1$ is {\em a posteriori} in the sense that $u$ can be replaced by some $\varphi\in K$. Once an approximation $u_\M$ of $u$ is known, some $\varphi\in K$  can be postprocessed by $u_\M$ (similar to the construction in \cite[Lemmas 3.3, 3.4]{brenner2012finite}) and then $N(f,u)$ can be bounded from above by $N(f,\varphi)$. If computed upper bound is small than $\sqrt{2}-1,$ this implies uniqueness of $(u,v).$
\end{rem}

\maketitle
\section{Regularity}
The regularity result in \cite{frehse1971differenzierbarkeitsproblem} will be employed for modified obstacles in the biharmonic obstacle problem. Given any obstacle $\widetilde{\chi}\in H^2(\Omega)\cap H^3_{\rm loc}(\Omega)$ with $\max \widetilde{\chi}(\partial \Omega)<0$, define a corresponding non-empty, closed and convex subset $K(\widetilde{\chi}):=\{\varphi\in H^2_0(\Omega): \varphi\ge \widetilde{\chi}\text{ a.e. }\text{in }\Omega \}$ of $H^2_0(\O)$ and notice $K=K(\chi)$ for the original obstacle $\chi$ from \eqref{wform}. Given any such $\widetilde{\chi},$ and $f\in L^2(\O),$ consider the problem that seeks the solution $\phi\in K(\widetilde{\chi})$ to
\begin{align}
a(\phi,\phi-\psi)&\le (f,\phi-\psi)_{L^2(\O)} \quad\text{  for all  }\psi\in K(\widetilde{\chi}). \label{VI}
\end{align}
\begin{thm}[Frehse 1971]\label{thm.frehse}
Let $\Omega$ be an open bounded connected subset of $\R^2$. If $\phi\in K(\widetilde{\chi})$ solves \eqref{VI} for $\widetilde{\chi}\in H^2(\Omega)\cap H^3_{\rm loc}(\Omega)$ with $\max \widetilde{\chi}(\partial \Omega)<0$, then $\phi\in H^2_0(\Omega)\cap H^3_{\rm loc}(\Omega)$.
\end{thm}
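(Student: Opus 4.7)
The plan is to follow Frehse's original penalization and difference-quotient strategy. For $\varepsilon>0,$ introduce the penalized biharmonic equation: find $\phi_\varepsilon\in H^2_0(\O)$ with
\begin{align*}
a(\phi_\varepsilon,\psi)=(f,\psi)_{L^2(\O)}+\varepsilon^{-1}\big((\widetilde{\chi}-\phi_\varepsilon)_+,\psi\big)_{L^2(\O)}\quad\text{for all }\psi\in H^2_0(\O),
\end{align*}
which is the Euler--Lagrange equation of a strictly convex coercive functional on $H^2_0(\O)$ and so admits a unique solution. Testing with $\psi=\phi_\varepsilon-\phi$ (using $\phi\in K(\widetilde{\chi})$ and the monotonicity of $(\bullet)_+$) yields the uniform-in-$\varepsilon$ bound $\trinl\phi_\varepsilon\trinr+\varepsilon^{-1/2}\|(\widetilde{\chi}-\phi_\varepsilon)_+\|_{L^2(\O)}\lesssim 1.$ Weak compactness then identifies a weak limit $\phi_\varepsilon\rightharpoonup\phi$ in $H^2_0(\O)$ with $\phi$ the (unique) solution of \eqref{VI}. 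In particular, $\phi\in H^2_0(\O).$

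For the interior $H^3$ regularity, introduce the shifted variable $w_\varepsilon:=\phi_\varepsilon-\widetilde{\chi}\in H^2(\O);$ since $\widetilde{\chi}\in H^3_{\rm loc}(\O),$ the function $w_\varepsilon$ satisfies in the distributional sense on $\O$
\begin{align*}
\Delta^2 w_\varepsilon-\varepsilon^{-1}(w_\varepsilon)_-=f-\Delta^2\widetilde{\chi}.
\end{align*}
Fix relatively compact $\O_0\Subset\O_1\Subset\O$ and a cutoff $\eta\in\mathcal{D}(\O_1)$ with $\eta\equiv 1$ on $\O_0.$ For a coordinate direction $k$ and small $|h|>0,$ apply the standard difference-quotient argument localized by $\eta,$ testing the equation for $\phi_\varepsilon$ with a compactly supported function built from $\eta$ and $D_h^k w_\varepsilon.$ The monotonicity of $t\mapsto (t)_-$ gives the pointwise inequality $D_h^k(w_\varepsilon)_-\cdot D_h^k w_\varepsilon\le 0,$ so the penalty contribution has a favorable sign and may be dropped from the upper bound. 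The biharmonic part produces the dominant quantity $\|\eta^2 D^2(D_h^k w_\varepsilon)\|_{L^2(\O)}^2$ modulo commutator terms supported on $\{\nabla\eta\neq 0\}$ which are absorbed via Young's inequality together with the $H^2$ bound already established; the right-hand side is controlled by $\|f\|_{L^2(\O)}+\|\widetilde{\chi}\|_{H^3(\O_1)}.$

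The outcome is an estimate $\|D_h^k w_\varepsilon\|_{H^2(\O_0)}\le C$ uniform in both $h$ and $\varepsilon.$ The standard difference-quotient characterization of $H^3$ then gives $w_\varepsilon\in H^3(\O_0)$ with the same bound; weak lower semicontinuity as $\varepsilon\downarrow 0$ transfers the bound to $w:=\phi-\widetilde{\chi},$ and combined with $\widetilde{\chi}\in H^3_{\rm loc}(\O)$ yields $\phi\in H^3_{\rm loc}(\O).$

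The main technical obstacle will be the difference-quotient step. The fourth-order operator interacts with the cutoff through up to four commutators, and the nonlinear penalty $(\bullet)_-$ is only Lipschitz and does not commute with differentiation, so the argument cannot differentiate the penalty term directly but must rely on the pointwise monotonicity inequality. Keeping all commutator contributions controllable using \emph{only} $\widetilde{\chi}\in H^3_{\rm loc}(\O)$ (and not more, which would be unphysical) is the delicate point that makes Frehse's theorem sharp, and the cutoff powers $\eta^4$ must be chosen precisely to absorb the biharmonic commutators without introducing uncontrolled lower-order terms.
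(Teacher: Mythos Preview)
Your proposal outlines Frehse's original penalization/difference-quotient argument, and as a strategy it is correct --- indeed, it is essentially a reconstruction of the proof in \cite{frehse1971differenzierbarkeitsproblem}. The paper, however, does \emph{not} reprove Frehse's result. It observes that Frehse's theorem as stated in \cite{frehse1971differenzierbarkeitsproblem} already gives $\phi\in H^3_{\rm loc}(\Omega)$ under the stronger hypothesis $\widetilde{\chi}\in H^3(\Omega)$ (global, not just local), and then \emph{reduces} the present statement to that citation by a short obstacle-modification trick. Using the strict inequality $\max\widetilde{\chi}(\partial\Omega)<0$, one finds a boundary strip $N(2\epsilon,\partial\Omega)$ in which $\widetilde{\chi}<\delta<\phi$ for some constant $\delta<0$, and blends $\widetilde{\chi}$ with the constant $\delta$ through a partition of unity $\psi_1+\psi_2\equiv 1$ to produce $\widehat{\chi}:=\delta\psi_1+\widetilde{\chi}\psi_2$. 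This $\widehat{\chi}$ is globally $H^3(\Omega)$ (smooth near $\partial\Omega$, and equal to $\widetilde{\chi}\in H^3_{\rm loc}$ in the interior) and satisfies $\widetilde{\chi}\le\widehat{\chi}\le\phi$. Since then $K(\widehat{\chi})\subset K(\widetilde{\chi})$ and $\phi\in K(\widehat{\chi})$, the same $\phi$ also solves \eqref{VI} with obstacle $\widehat{\chi}$, and the original Frehse theorem applies verbatim.

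Your route is self-contained and, once the fourth-order commutator bookkeeping with the cutoff $\eta^4$ is carried out in full, yields the conclusion without any black-box citation; notably, your argument never uses the strictness of $\max\widetilde{\chi}(\partial\Omega)<0$, since interior regularity only sees $\widetilde{\chi}\in H^3_{\rm loc}$. The paper's route is an order of magnitude shorter but trades the analysis for a literature reference and crucially exploits the strict boundary negativity to create the room needed for the obstacle modification --- which is exactly the additional hypothesis (compared to Frehse's $\max\widetilde{\chi}(\partial\Omega)\le 0$) that the paper wishes to highlight.
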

\begin{proof}
Frehse's result \cite[Theorem 1]{frehse1971differenzierbarkeitsproblem} shows $\phi\in H^2_0(\Omega)\cap H^3_{\rm loc}(\Omega)$ even under the much more involved assumption $\widetilde{\chi}\in H^3(\O)$ and $\max \widetilde{\chi}(\partial\Omega)\le 0$. The theorem at hand assures that $\max \widetilde{\chi}(\partial\Omega)<0$ and the proof will establish that Frehse's result can be adapted. The remaining parts of this proof establish that for an appropriate $\widehat{\chi}\in H^3(\O)$ constructed in the sequel, $\phi$ satisfies \eqref{VI} with an obstacle $\widehat{\chi}$. Since $\max \widetilde{\chi}(\partial\Omega)<0$ and $\phi\in H^2_0(\Omega)$, there exist $\epsilon>0$ and $\delta<0$ such that $\widetilde{\chi}<\delta<\phi$ in $\overline{N(2\epsilon,\partial\Omega)},$ where $N(2\epsilon,\partial\Omega):= \{x\in \Omega: {\rm{dist}}(x,\partial\Omega)<2\epsilon\}$. Select cut-off functions $0\leq\psi_1,\psi_2\in C^{\infty}(\overline{\Omega})$ such that $\psi_1+\psi_2\equiv 1 \text{ in } \overline{\Omega}$ and
	\begin{align}
	\psi_1=
	\begin{cases}
	1\quad \text{ in }\,\overline{N(\epsilon,\partial\Omega)},\\
	0\quad \text{ in }\,\Omega\setminus N(2\epsilon,\partial\Omega)\\
	\end{cases}
	\mbox{and}
	\hspace{5mm}
	\psi_2=
	\begin{cases}
	0\quad \text{ in }\,\overline{N(\epsilon,\partial\Omega)},\\
	1\quad \text{ in }\,\Omega\setminus N(2\epsilon,\partial\Omega).\\
	\end{cases}
	\end{align}
	Consider $\widehat{\chi}:=\delta\psi_1+\widetilde{\chi}\psi_2$ and derive the following three inequalities
	$$\widetilde{\chi}(x)<\delta=\widehat{\chi}(x)<\phi(x)\,\text{ for all } x\in \overline{N(\epsilon,\partial\Omega)},$$
	$$\widetilde{\chi}(x)=\widehat{\chi}(x)\le \phi(x)\,\text{ for all } x\in \Omega\setminus N(2\epsilon,\partial\Omega),$$
	$$\widetilde{\chi}(x)<\delta\psi_1(x)+\widetilde{\chi}(x)\psi_2(x)=\widehat{\chi}(x)<\delta<\phi(x)\,\text{ for all } x\in \big(\Omega\setminus\overline{N(\epsilon,\partial\Omega)}\big)\cap N(2\epsilon,\partial\Omega) .$$
	
	\noindent The above three inequalities imply $\widetilde{\chi}\le\widehat{\chi}\le \phi$ in $\overline{\Omega}$ and $\widetilde{\chi}\in H^3\big(\Omega\setminus N(\epsilon,\partial\Omega)\big).$ By construction, $\widehat{\chi}$ is the combination of a $H^3(\Omega)$ and a $C^{\infty}(\overline{\Omega})$ function, and hence, $\widehat{\chi}\in H^3(\Omega)$. Given $\widehat{\chi}$ as an obstacle, the solution $\phi \in K(\widehat{\chi})$ to \eqref{VI} also satisfies 
	\begin{align}
	a(\phi,\phi-\psi)&\le (f,\phi-\psi)_{L^2(\Omega)} \quad\text{  for all  }\psi\in K(\widehat{\chi}). \label{VItilde}
	\end{align}
	Since the obstacle $\widehat{\chi}$ of the problem \eqref{VItilde} belongs to $H^2_0(\Omega)\cap H^3(\Omega)$, \cite[Theorem 1]{frehse1971differenzierbarkeitsproblem} proves $\phi\in H^3_{\rm loc}(\Omega)$. 
\end{proof}
\noindent

\noindent
The final regularity result of the \vk\, obstacle problem relies on the following three lemmas.

\begin{lem}[{\cite[Equation (2.6)]{blum1990mixed}}, {\cite[Theorem 2]{blum1980boundary}}]\label{blum1980boundary}
	Let $\O$ be a bounded polygonal domain in $\R^2$. If $w\in H^2_0(\O)$ solves the biharmonic problem, $a(w,\varphi)=f(\varphi)$ for all $\varphi\in H^2_0(\O)$, with data $f\in H^{-1}(\O)$ $($resp. $L^2(\O))$, then $w\in H^3_{\rm loc}(\O)\cap H^{2+\alpha}(\O)$ $($resp. $H^4_{\rm loc}(\O)).$ If the bounded Lipschitz domain $\O$ has a $C^{2+\gamma}$ boundary for some $0<\gamma<1$ and $f\in L^2(\O)$ $($resp. $H^{-1}(\O))$, then the solution $w$ belongs to $H^4(\O)$ $($resp. $H^3(\O))$.
\end{lem}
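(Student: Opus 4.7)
The statement is a classical elliptic regularity result for the biharmonic operator attributed to Blum and Rannacher. I would not attempt to reprove it from scratch but rather outline the three conceptually distinct ingredients that combine to give the claim, and indicate where each ingredient delivers which part of the conclusion.

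First, for the interior regularity statement ($w \in H^3_{\rm loc}(\O)$ when $f\in H^{-1}$ and $w \in H^4_{\rm loc}(\O)$ when $f\in L^2$), the plan is to apply Nirenberg's difference-quotient method to the equation $a(w,\varphi) = f(\varphi)$. Since $\Delta^2$ is an elliptic operator of order four with constant coefficients, the strategy on any subdomain $\omega \Subset \O$ is to fix a cutoff $\eta\in C_c^\infty(\O)$ with $\eta \equiv 1$ on $\omega$, substitute the admissible test function $\varphi = D_h^{-}(\eta^2 D_h^{+} w)$ (translation in an arbitrary coordinate direction), expand the bilinear form, and absorb top-order commutator terms using Young's inequality. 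Taking $h\to 0$ upgrades $D^2 w$ by one derivative locally, yielding $w\in H^3_{\rm loc}(\O)$; the $L^2$ datum case is identical and gives $H^4_{\rm loc}(\O)$ after a second application.

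Second, for the global regularity $w\in H^{2+\alpha}(\O)$ on a bounded polygonal domain, the plan is to invoke the Kondratiev theory of corner singularities for the biharmonic operator, as in the cited Blum--Rannacher paper. Near each vertex $c_j$ with interior opening angle $\omega_j$, one writes $w$ in local polar coordinates and expands in eigenfunctions of the Mellin-transformed operator on the clamped sector. Each regular solution admits the asymptotic decomposition $w = w_{\rm reg} + \sum_j c_j r^{\lambda_j} \Phi_j(\theta) + \text{lower order}$ with $w_{\rm reg}\in H^4$ near $c_j$ and $\lambda_j$ a root of the characteristic transcendental equation associated with $\Delta^2$ on the sector of angle $\omega_j$. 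The smallest such exponent with positive real part determines the elliptic regularity index; setting $\alpha = \min_j (\mathrm{Re}\,\lambda_j) - 1 \in (1/2,1]$ and combining with interior regularity delivers $w\in H^{2+\alpha}(\O)$.

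Third, for the smooth-boundary statement ($\O$ Lipschitz with a $C^{2+\gamma}$ boundary), the plan is to apply the Agmon--Douglis--Nirenberg shift theorem for the clamped biharmonic boundary value problem: the operator $\Delta^2$ with Dirichlet-type boundary conditions forms an elliptic system satisfying the complementing condition, so $w\in H^2_0(\O)$ together with $\Delta^2 w = f \in L^2(\O)$ (respectively $H^{-1}(\O)$) upgrades $w$ to $H^4(\O)$ (respectively $H^3(\O)$) with continuous dependence on the data. The main obstacle of the whole argument is the polygonal-domain step, since it is not purely soft analysis but depends on an explicit spectral analysis at each corner; once that ingredient is taken from the cited references, the remaining steps are standard.
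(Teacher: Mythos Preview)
Your outline is a reasonable high-level sketch of the classical ingredients behind this regularity result, and nothing in it is wrong. However, you should be aware that the paper does not prove this lemma at all: it is stated with an explicit attribution to \cite{blum1990mixed} and \cite{blum1980boundary} and is used as a black box in the subsequent regularity analysis of the von K\'arm\'an obstacle problem. So there is no ``paper's own proof'' to compare against; the authors simply import the result from the literature, and any self-contained argument such as the one you sketch goes beyond what the paper itself provides.
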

\begin{lem}[{\cite[Theorem 7]{blum1980boundary}}]\label{blum1980boundaryvk}
Let $\O$ be a bounded polygonal domain in $\R^2$. If $(w_1,w_2)\in H^2_0(\O)\times H^2_0(\O)$ is a solution to the \vket, $a(w_1,\varphi_1)+2b(w_1,w_2,\varphi_1)+a(w_2,\varphi_2)-b(w_1,w_1,\varphi_2)=f(\varphi_1)$ for all $(\varphi_1,\varphi_2)\in H^2_0(\O)\times H^2_0(\O)$, with data $f\in H^{-1}(\O)$, then $(w_1,w_2)\in H^{2+\alpha}(\O)\times H^{2+\alpha}(\O)$. 
\end{lem}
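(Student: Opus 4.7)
The plan is to decouple the coupled system into two biharmonic problems, control the regularity of the von K\'{a}rm\'{a}n nonlinearity as an element of $H^{-1}(\Omega)$, and then apply Lemma \ref{blum1980boundary} separately to each equation. Concretely, testing the given weak formulation with $\varphi_1=0$ and then with $\varphi_2=0$ produces the two biharmonic problems
\begin{align*}
a(w_1,\varphi_1) &= -2b(w_1,w_2,\varphi_1)+f(\varphi_1) \quad\text{for all }\varphi_1\in H^2_0(\Omega),\\
a(w_2,\varphi_2) &= b(w_1,w_1,\varphi_2) \quad\text{for all }\varphi_2\in H^2_0(\Omega).
\end{align*}
Lemma \ref{blum1980boundary} guarantees $H^{2+\alpha}(\Omega)$ regularity whenever the right-hand side is a bounded linear functional on $H^1_0(\Omega)$, so the task reduces to verifying that both $b(w_1,w_2,\bullet)$ and $b(w_1,w_1,\bullet)$ extend to elements of $H^{-1}(\Omega)$.

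The crucial observation is the divergence structure of the von K\'{a}rm\'{a}n bracket. A short calculation shows that for smooth functions
\[
[\varphi_1,\varphi_2] \;=\; \frac{\partial^2}{\partial x^2}(\varphi_{1,y}\varphi_{2,y}) + \frac{\partial^2}{\partial y^2}(\varphi_{1,x}\varphi_{2,x}) - \frac{\partial^2}{\partial x\partial y}(\varphi_{1,x}\varphi_{2,y}+\varphi_{1,y}\varphi_{2,x}),
\]
and this identity extends by density to $\varphi_1,\varphi_2\in H^2_0(\Omega)$. In two dimensions, the Sobolev embedding $H^1(\Omega)\hookrightarrow L^q(\Omega)$ for every $q<\infty$ yields that the first derivatives of $w_1,w_2\in H^2_0(\Omega)$ are in any $L^q(\Omega)$, whence their pairwise products lie in $L^q(\Omega)$ for any $q<\infty$. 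Integrating by parts against a test function $\psi\in H^2_0(\Omega)$ transfers one derivative onto $\psi$, and the resulting expression is bounded by $C\|\nabla\psi\|_{L^{q'}(\Omega)}$. Hence $b(w_1,w_2,\bullet)$ and $b(w_1,w_1,\bullet)$ extend to continuous linear functionals on $H^1_0(\Omega)$, i.e.\ they belong to $H^{-1}(\Omega)$.

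With this in hand, Lemma \ref{blum1980boundary} applies directly to the second equation to give $w_2\in H^{2+\alpha}(\Omega)$. Substituting this improved regularity of $w_2$ back into the first biharmonic problem and noting that $f\in H^{-1}(\Omega)$ by hypothesis, Lemma \ref{blum1980boundary} applies once more to give $w_1\in H^{2+\alpha}(\Omega)$. The main obstacle in the whole argument is the $H^{-1}$ bound on the bracket: if one merely used $[w_1,w_2]\in L^1(\Omega)$ from a crude H\"{o}lder estimate, the right-hand side would only land in $H^{-2}(\Omega)$ and no improvement beyond $H^2$ could be extracted. The divergence-structure rewriting together with the two-dimensional Sobolev embedding $H^2\hookrightarrow W^{1,q}$ for all $q<\infty$ is exactly what closes this gap and makes the bootstrap work.
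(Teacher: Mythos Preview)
The paper does not supply its own proof of this lemma---it is quoted as Theorem~7 of Blum--Rannacher---so there is no in-paper argument to compare line by line. Your overall architecture (decouple into two biharmonic problems and invoke Lemma~\ref{blum1980boundary}) is the standard route. The step on which everything hinges, however---that $[w_1,w_2]\in H^{-1}(\Omega)$ for $w_1,w_2\in H^2_0(\Omega)$---does not follow from the argument you give.

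Your divergence identity writes $[w_1,w_2]$ as a sum of \emph{second} derivatives of products of first derivatives. After one integration by parts against $\psi$, each surviving term has the form $\int \partial_j(w_{1,a}\,w_{2,b})\,\partial_i\psi\,dx$, and $\partial_j(w_{1,a}\,w_{2,b})$ is a product of a second derivative (only in $L^2$) with a first derivative (in every $L^q$, $q<\infty$), hence lies merely in $L^p(\Omega)$ for $p<2$. The bound you record, $C\|\nabla\psi\|_{L^{q'}}$ with $q'>2$, is \emph{not} dominated by $\|\psi\|_{H^1_0}$ because $H^1_0\not\hookrightarrow W^{1,q'}$ for $q'>2$ in two dimensions. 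Your argument therefore places $[w_1,w_2]$ only in $\bigcap_{\varepsilon>0}H^{-1-\varepsilon}(\Omega)$, not in $H^{-1}(\Omega)$, and Lemma~\ref{blum1980boundary} cannot yet be invoked.

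The cure is a bootstrap, precisely of the kind the paper carries out in Lemma~\ref{[u,v]belongsH-1}. From $w_1\in H^2_0$ one has $[w_1,w_1]\in L^1(\Omega)\subset H^{-1-\varepsilon}(\Omega)$ via $H^{1+\varepsilon}_0\hookrightarrow L^\infty$, so a fractional shift theorem for the biharmonic operator yields $w_2\in H^{2+\alpha-\varepsilon}(\Omega)$. Since $\alpha>1/2$, choose $\varepsilon$ with $\alpha-\varepsilon>1/2$ so that $w_2\in W^{2,4}(\Omega)$; then $\mathrm{cof}(D^2w_2)\in L^4$, $\nabla w_1\in L^4$, and the cofactor identity $[w_1,w_2]=\mathrm{div}\bigl(\mathrm{cof}(D^2w_2)\nabla w_1\bigr)$ finally produces an $L^2$ vector field under the divergence, whence $[w_1,w_2]\in H^{-1}(\Omega)$. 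Now Lemma~\ref{blum1980boundary} gives $w_1\in H^{2+\alpha}(\Omega)$, and the same reasoning (with $[w_1,w_1]\in H^{-1}$ once $w_1\in W^{2,4}$) closes for $w_2$. In short: your plan is right, but the $H^{-1}$ control of the bracket requires an intermediate regularity gain for one factor and cannot be obtained in a single elementary step from $H^2_0$ data.
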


\noindent The remaining parts of this section return to \eqref{wform} with $f \in L^2(\Omega)$ and a polygonal domain $\Omega$.

\begin{lem}\label{[u,v]belongsH-1}
	If $(u,v)\in K\times H^2_0(\O)$ solves \eqref{wform}, then $[u,v]\in H^{-1}(\Omega)$.
\end{lem}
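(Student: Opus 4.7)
My plan is to reduce the assertion to the classical two-dimensional Wente inequality for Jacobians of $H^1$ functions. With the notation $J(f,g) := f_x g_y - f_y g_x$, a direct expansion yields the algebraic identity
\begin{equation*}
[u,v] = J(u_x, v_y) - J(u_y, v_x),
\end{equation*}
because $J(u_x,v_y) = u_{xx}v_{yy} - u_{xy}v_{xy}$ and $J(u_y,v_x) = u_{xy}v_{xy} - u_{yy}v_{xx}$, whose difference equals $u_{xx}v_{yy} + u_{yy}v_{xx} - 2u_{xy}v_{xy}$. So $[u,v]$ is (up to sign) a sum of two Jacobians of functions built from first partial derivatives of $u$ and $v$.

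The second step exploits the homogeneous boundary data. For $u,v \in H^2_0(\Omega)$ the normal derivative vanishes on $\partial\Omega$ by hypothesis, and the tangential derivative vanishes because $u = v = 0$ along $\partial\Omega$; hence $\nabla u$ and $\nabla v$ vanish in the trace sense, which places each of $u_x, u_y, v_x, v_y$ in $H^1_0(\Omega)$. The lemma therefore reduces to the standalone 2D claim: for $f, g \in H^1_0(\Omega)$, the Jacobian $J(f,g)$ defines a distribution in $H^{-1}(\Omega)$ with
\begin{equation*}
\| J(f,g) \|_{H^{-1}(\Omega)} \lesssim \|\nabla f\|_{L^2(\Omega)} \|\nabla g\|_{L^2(\Omega)}.
\end{equation*}

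The third and decisive step is this Wente-type bound. Testing against $\psi \in C^\infty_c(\Omega)$ and integrating by parts gives the distributional identity $\int_\Omega J(f,g)\psi\,dx = \int_\Omega f\, J(g,\psi)\,dx$, but a naive H\"older chain on the right-hand side falls short because $H^1(\Omega) \not\hookrightarrow L^\infty(\Omega)$ in two dimensions. The required bound rests on the Coifman--Lions--Meyer--Semmes theorem, which places $J(f,g)$ in the Hardy space $\mathcal{H}^1(\mathbb{R}^2)$ after extension by zero, combined with the Fefferman--Stein duality $\mathcal{H}^1(\mathbb{R}^2)^\ast = \mathrm{BMO}(\mathbb{R}^2)$ and the borderline Sobolev embedding $H^1(\mathbb{R}^2) \hookrightarrow \mathrm{BMO}(\mathbb{R}^2)$; together these yield $\mathcal{H}^1(\mathbb{R}^2) \hookrightarrow H^{-1}(\mathbb{R}^2)$, and restriction to $\Omega$ produces the claimed bound. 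This 2D compensated-compactness input is the main obstacle, being a sharp phenomenon not accessible from elementary Sobolev estimates; in this applied setting I would invoke Wente's inequality by citation rather than reprove it, and then apply it to the two Jacobians in the decomposition of $[u,v]$ to conclude $[u,v] \in H^{-1}(\Omega)$.
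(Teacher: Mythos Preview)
Your argument is correct but follows a genuinely different route from the paper. You decompose $[u,v]$ as a sum of Jacobians of first derivatives, observe that $\nabla u,\nabla v\in H^1_0(\Omega)^2$, and invoke the Wente/Coifman--Lions--Meyer--Semmes machinery to place each Jacobian in $H^{-1}(\Omega)$. This proves the stronger assertion that $[u,v]\in H^{-1}(\Omega)$ for \emph{any} $u,v\in H^2_0(\Omega)$, without using at all that $(u,v)$ solves \eqref{wform}.

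The paper instead exploits the problem structure: since $v$ solves \eqref{wformb} with right-hand side $b(u,u,\bullet)$, a shift theorem lifts $v$ from $H^2_0(\Omega)$ to $H^{2+\alpha-\epsilon}(\Omega)$ with $\alpha-\epsilon>1/2$. Then the cofactor identity $([u,v],\varphi)_{L^2(\Omega)}=-\int_\Omega\mathrm{cof}(D^2v)\nabla u\cdot\nabla\varphi\,dx$ and the elementary embeddings $H^2(\Omega)\hookrightarrow W^{1,4}(\Omega)$ and $H^{2+\alpha-\epsilon}(\Omega)\hookrightarrow W^{2,4}(\Omega)$ yield the $H^{-1}$ bound by H\"older alone. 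Your approach is more general and self-contained in the sense that it treats $u$ and $v$ symmetrically and requires no bootstrapping, but it imports a sharp compensated-compactness result; the paper's approach stays within standard elliptic regularity and Sobolev embeddings at the cost of using the equation for $v$.
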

\begin{proof}
	The Sobolev embedding $H^{1+\epsilon}(\O)\hookrightarrow L^{\infty}(\O)$ and $u\in H^2_0(\O)$ imply $[u,u]\in H^{-1-\epsilon}(\Omega)$ for any $\epsilon>0.$ A shift theorem \cite[Theorem 8]{bacuta2002shift} in \eqref{wformb} shows $v\in H^{2+\alpha-\epsilon}(\O)$ for $1/2<\alpha\le 1.$ 
	Given $\alpha,$ choose $\epsilon$ such that $\alpha-\epsilon>1/2$. 
	Then, \cite[Lemma 2.2]{brenner2017c} 
	implies \[([u,v],\varphi)_{L^2(\O)}= -\int_{\O} \mathrm{cof}(D^2v)\nabla u\cdot\nabla \varphi \,dx\le \|\mathrm{cof}(D^2v)\|_{L^4(\O)}\|\nabla u\|_{L^4(\O)}\|\nabla \varphi\|_{L^2(\Omega)}\] for all $\varphi\in H^1_0(\Omega).$ This and the Sobolev embeddings $H^2(\O)\hookrightarrow W^{1,4}(\O)$, $H^{2+\alpha-\epsilon}(\O)\hookrightarrow W^{2,4}(\O)$ conclude the proof.
\end{proof}

\begin{thm}[\rm Regularity for \vk\, obstacle problem]\label{uH2+delta}
	Let $\O$ be a bounded polygonal domain in $\R^2$. If $(u,v)\in K\times H^2_0(\O)$ solves \eqref{wform}, then $u,v\in H^{2+\alpha}(\Omega)\cap H^3_{\rm loc}(\Omega)\cap C^2(\O).$
\end{thm}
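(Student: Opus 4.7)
The plan is to treat $u$ as the solution of a biharmonic obstacle problem and $v$ as the solution of a linear biharmonic equation, each with a right-hand side involving the \vk\, bracket, and to bootstrap their regularity alternately by combining Lemma~\ref{[u,v]belongsH-1} (cofactor representation), Lemma~\ref{blum1980boundary} (polygonal biharmonic regularity), and Theorem~\ref{thm.frehse} (Frehse's obstacle regularity).

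First, I would rewrite the obstacle inequality \eqref{wforma} using the identity $2b(u,v,\bullet)=-([u,v],\bullet)_{L^2(\Omega)}$ in the form
\[
 a(u,u-\varphi_1)\le \langle f+[u,v],\,u-\varphi_1\rangle\quad\text{for all }\varphi_1\in K,
\]
where Lemma~\ref{[u,v]belongsH-1} guarantees $[u,v]\in H^{-1}(\Omega)$, so that the effective source $F:=f+[u,v]$ lies in $H^{-1}(\Omega)$. In parallel, \eqref{wformb} identifies $v$ as the solution of a linear biharmonic equation with source $-\tfrac12[u,u]$, and the first step in the proof of Lemma~\ref{[u,v]belongsH-1} gives $[u,u]\in H^{-1-\epsilon}(\Omega)$ for every $\epsilon>0$. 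Lemma~\ref{blum1980boundary} therefore supplies $v\in H^{2+\alpha-\epsilon}(\Omega)\cap H^3_{\rm loc}(\Omega)$; choosing $\epsilon$ so small that $\alpha-\epsilon>1/2$ and exploiting the two-dimensional embedding $H^{2+\alpha-\epsilon}\hookrightarrow W^{1,\infty}\cap W^{2,4}$, the cofactor identity can later be used to upgrade $[u,u]$ to $H^{-1}(\Omega)$ as soon as a comparable bound for $u$ is available.

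Second, I would extract the regularity of $u$ from the obstacle problem. An extension of Theorem~\ref{thm.frehse} to $H^{-1}$ right-hand sides, together with the cut-off construction $\widehat\chi\in H^3(\Omega)$ from its proof and the polygonal estimate in Lemma~\ref{blum1980boundary}, should deliver $u\in H^{2+\alpha}(\Omega)\cap H^3_{\rm loc}(\Omega)$. Plugging this back into the cofactor form gives $[u,u]\in H^{-1}(\Omega)$, whence Lemma~\ref{blum1980boundary} applied to \eqref{wformb} yields $v\in H^{2+\alpha}(\Omega)\cap H^3_{\rm loc}(\Omega)$.

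For $u,v\in C^2(\Omega)$ I would rely on interior elliptic regularity: with $u,v\in H^{2+\alpha}(\Omega)$ and $\alpha>1/2$, the Sobolev multiplication $H^{1+\alpha}\cdot H^{\alpha}\hookrightarrow L^{p}_{\rm loc}$ for some $p>2$ and the cofactor representation produce $[u,u],[u,v]\in L^2_{\rm loc}(\Omega)$; then the $L^2$-interior part of Lemma~\ref{blum1980boundary} gives $v\in H^4_{\rm loc}(\Omega)\hookrightarrow C^2(\Omega)$, while a localized variant of the argument in the proof of Theorem~\ref{thm.frehse} on the open set $\{u>\chi\}$ (combined with the $H^2$ regularity of $\chi$ on the coincidence set) delivers $u\in C^2(\Omega)$. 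The main obstacle will be the second step, namely adapting Frehse's theorem to an obstacle problem whose source $F$ lies only in $H^{-1}(\Omega)$; I expect this to require a mollification of $[u,v]$ in $L^2(\Omega)$ and passage to the limit, combined with the cut-off construction of $\widehat\chi$ near $\partial\Omega$ that uses $\max\chi(\partial\Omega)<0$, in order to recover simultaneously the local $H^3$ and the global $H^{2+\alpha}$ bounds for $u$.
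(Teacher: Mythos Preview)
Your overall strategy---treat $u$ as a biharmonic obstacle problem with source $f+[u,v]$ and bootstrap---matches the paper, but two concrete steps are handled differently there, and in your version they remain genuine gaps.

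\emph{First}, you correctly flag that Frehse's theorem (Theorem~\ref{thm.frehse}) is stated only for $L^2$ sources, whereas your source $F=f+[u,v]$ is merely in $H^{-1}(\Omega)$. Rather than mollifying $[u,v]$ and passing to a limit (which would require uniform $H^3_{\rm loc}$ bounds through the obstacle problem---delicate, and not covered by the tools in the paper), the paper uses a simple \emph{shift trick}: let $w_1\in H^2_0(\Omega)$ solve $a(w_1,\bullet)=-([u,v],\bullet)$, so that $w_1\in H^3_{\rm loc}(\Omega)\cap H^{2+\alpha}(\Omega)$ by Lemmas~\ref{[u,v]belongsH-1} and~\ref{blum1980boundary}. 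Then $\widetilde w:=u+w_1$ solves the obstacle problem \eqref{VI} with obstacle $\chi+w_1\in H^3_{\rm loc}(\Omega)$ and source $f\in L^2(\Omega)$. Now Theorem~\ref{thm.frehse} applies directly and gives $\widetilde w\in H^3_{\rm loc}(\Omega)$, hence $u\in H^3_{\rm loc}(\Omega)$. This avoids any extension of Frehse's result.

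\emph{Second}, your route from $H^3_{\rm loc}$ to $H^{2+\alpha}(\Omega)$ is vague: Lemma~\ref{blum1980boundary} treats the biharmonic \emph{equation}, not the obstacle problem, so it cannot be applied to $u$ globally. The paper exploits $\max\chi(\partial\Omega)<0$ to pick a cut-off $\xi\in C^\infty(\overline\Omega)$ with $\xi\equiv1$ near $\partial\Omega$ and $\mathrm{supp}\,\xi$ disjoint from the contact set $\mathscr C=\{u=\chi\}$. On $\mathrm{supp}\,\xi$ the constraint is inactive, so $\Delta^2 u=f+[u,v]$ there; after some algebra the pair $(\xi u,\,v-v_1)$ solves a genuine von~K\'arm\'an system on $\Omega$ with right-hand side in $H^{-1}(\Omega)$, and Lemma~\ref{blum1980boundaryvk} yields $\xi u\in H^{2+\alpha}(\Omega)$. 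Since $(1-\xi)u$ is compactly supported and already $H^3_{\rm loc}$, one gets $u\in H^{2+\alpha}(\Omega)$. Your $C^2$ step for $v$ is essentially the paper's; for $u\in C^2(\Omega)$ the paper invokes the $C^2$-regularity result for the \emph{biharmonic} obstacle problem from \cite{brenner2012finite}, which needs the obstacle in $C^2(\overline\Omega)$---again obtained via a cut-off construction as in Theorem~\ref{thm.frehse}.
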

\begin{proof}
	Let $(u,v)$ solve \eqref{wform} and let $w\in K(\chi)$ solve
	\begin{align}
	a(w,w-\varphi)&\le (f+[u,v],w-\varphi)_{L^2(\Omega)} \quad\text{  for all  }\varphi\in K(\chi). \label{regvke1}
	\end{align}
	Let $w_1\in H^2_0(\Omega)$ be the Riesz representation of $-[u,v]$ in the Hilbert space $\big(H^2_0(\O),a(\bullet,\bullet)\big)$, i.e., $w_1$ satisfies $a(w_1,\varphi_1)= -([u,v],\varphi_1)_{L^2(\O)}$ for all $\varphi_1\in H^2_0(\Omega).$ Lemmas \ref{[u,v]belongsH-1} and \ref{blum1980boundary} show that $w_1\in H^3_{\rm loc}(\Omega)\cap H^{2+\alpha}(\Omega)$. Translate the obstacle $\chi$ of \eqref{regvke1} to $\chi+w_1$ and set $\widetilde{w}:=w+w_1$ to obtain
	\begin{align}
	a(\widetilde{w},\widetilde{w}-\widetilde{\varphi})&\le (f,\widetilde{w}-\widetilde{\varphi})_{L^2(\O)} \quad\fl\widetilde{\varphi}\in K(\chi+w_1) \label{transregvke1}.
	\end{align}
	Since the obstacle $\chi+w_1\in H^3_{\rm loc}(\Omega)$, Theorem \ref{thm.frehse} implies $\widetilde{w}\in H^3_{\rm loc}(\Omega)$. Also $w_1\in H^3_{\rm loc}(\Omega)$ implies $w=\widetilde{w}-w_1\in H^3_{\rm loc}(\Omega)$. The solution $u$ to \eqref{wform} also solves \eqref{regvke1}. The uniqueness of the solution in \eqref{VI} implies that $u\in H^3_{\rm loc}(\Omega)$.\\
	Let the contact region $\mathscr{C}:=\{x\in\Omega:u(x)=\chi(x)\}.$ Define a cut-off function $\xi \in C^\infty(\overline \Omega)$ with $\xi\equiv 1$ in $ N(\epsilon,\partial\Omega)$ for some $\epsilon>0$ such that $\overline{ N(2\epsilon,\partial\Omega)}\cap\mathscr{C}=\emptyset$, i.e., ${\rm supp}(\xi)\subset  N(2\epsilon,\partial\Omega)$ keeps a positive distance to $\mathscr{C}$. The strong form of \eqref{wformb} and elementary manipulations show
	\begin{equation}
	\Delta^2 v=-{{\frac{1}{2}}}[u,u]=-\frac{1}{2}[\xi u,\xi u]-[(1-\xi)u,\xi u]-\frac{1}{2}[(1-\xi)u,(1-\xi)u].
	\end{equation}
	Let $v_1\in H^2_0(\Omega)$ solve $\Delta^2v_1=f_1$ for $f_1:=-[(1-\xi)u,\xi u]-\frac{1}{2}[(1-\xi)u,(1-\xi)u]$. Since $u\in H^2_0(\Omega)\cap H^3_{\rm loc}(\Omega),$ $\xi u\in H_0^2(\Omega)$, $(1-\xi)u\in H^3(\Omega)$ and $f_1\in H^{-1}(\O)$. Lemma \ref{blum1980boundary} leads to $v_1\in H^{2+\alpha}(\Omega)$. Also, $\Delta^2(v-v_1)=-{\small\frac{1}{2}}[\xi u,\xi u]$ in $\Omega$. Since ${\rm supp}(\xi)\cap\mathscr{C}=\emptyset,$ \eqref{wforma} implies  $\Delta^2 u=f+[u,v]$ in ${\rm supp}(\xi)$. Since $(1-\xi) u\in H^3(\Omega)$ and $v_1\in H^{2+\alpha}(\Omega),$ it follows from the arguments in Lemma \ref{[u,v]belongsH-1} that $f_2:=f+[\xi u,v_1]+[(1-\xi) u,v]-\Delta^2((1-\xi) u)\in H^{-1}(\Omega)$. This and elementary manipulations lead to
	\begin{align*}\nonumber
	\Delta^2(\xi u)&=\Delta^2 u-\Delta^2((1-\xi) u)
	=f+[\xi u,v]+[(1-\xi) u,v]-\Delta^2((1-\xi) u)\\\nonumber
	&=f+[\xi u,v-v_1]+[\xi u,v_1]+[(1-\xi) u,v]-\Delta^2((1-\xi) u)\\
	&=f_2+[\xi u,v-v_1].\nonumber
	\end{align*}
	In other words, $(\xi u,v-v_1)$ solves the von K\'arm\'an equations for the right-hand side $f_2 \in H^{-1}(\Omega)$ and $\xi u\in H^{2+\alpha}(\Omega)$. 
	Since $\xi u, (1-\xi)u\in H^{2+\alpha}(\Omega),$ it follows $u=\xi u+(1-\xi)u\in H^{2+\alpha}(\Omega)$. Return to the proof of Lemma \ref{[u,v]belongsH-1} with the improved regularity $u\in H^{2+\alpha}(\O)$ to deduce that $[u,u]\in H^{-1}(\O)$. Since $v=G(u)$ solves \eqref{wformb}, this shows $v\in H^{2+\alpha}(\O)\cap H^3_{\rm loc}(\Omega)$.
	\smallskip
	\noindent\\
	The above arguments imply $u,v\in H^{2+\alpha}(\Omega)$, for $\alpha>1/2$, and the Sobolev embedding $ H^{2+\alpha}(\Omega)\hookrightarrow W^{2,4}(\Omega)$ shows $[u,u],[u,v]\in L^2(\Omega)$. By Lemma \ref{blum1980boundary}, the solution to $\Delta^2 v=-\frac{1}{2}[u,u]$ belongs to $H^4_{\rm loc}(\Omega)$. Then, the continuous Sobolev embedding $H^4_{\rm loc}(\Omega)\hookrightarrow C^2(\O)$ implies $v\in C^2(\O)$. Since $u\in H^2_0(\O)$, $\chi\in C^2(\O)$, and $\max \chi(\partial \Omega)<0$, the arguments in the proof of Theorem \ref{thm.frehse} lead to $\widetilde{\chi}\in C^2(\overline{\O})$ such that $\chi\le\widetilde{\chi}\le u$. This shows that $u\in K(\widetilde{\chi})$, and hence with $\widetilde{f}:=f+[u,v]\in L^2(\O)$, $u$ solves
	\begin{align}
	a(u,u-\varphi)&\le (\widetilde{f},u-\varphi)_{L^2(\O)} \quad\fl\varphi\in K(\widetilde{\chi}). \label{tildechi}
	\end{align}
	\cite[Appendix A]{brenner2012finite} establishes the regularity result for the biharmonic obstacle problem \eqref{tildechi}, which implies that the solution $u$ belongs to $C^2(\O)$. This concludes the proof.
\end{proof}
\noindent

\begin{rem}[$C^2(\overline{\Omega})$ \rm regularity]\label{C2regularity}
	If the bounded Lipschitz domain $\O$ has a $C^{2+\gamma}$ boundary for some $0<\gamma<1,$ then any solution $(u,v)$ to \eqref{wform} belongs to $C^2(\overline{\Omega})\times C^2(\overline{\Omega})$. In fact, $[u,u]\in L^2(\O)$, Lemma \ref{blum1980boundary}, and continuous Sobolev embedding $H^4(\O)\hookrightarrow C^2(\overline{\O})$ imply that the solution $v$ to \eqref{wformb} belongs to $C^2(\overline{\O})$. An application of Lemma \ref{blum1980boundary} to the  arguments of \cite[Appendix A]{brenner2012finite} for \eqref{tildechi} conclude that the solution $u$ to \eqref{wforma} belongs to $C^2(\overline{\O})$.%
\end{rem}

\section{Morley finite element approximation}
The first subsection discusses some preliminaries on the Morley FEM and interpolation and enrichment operators. The second subsection derives the existence, uniqueness under a computable smallness assumption and an {\em a priori} bound of the discrete solution.

\subsection{Preliminaries}\label{sec.preliminaries}
\noindent Let $\T$ be an admissible and regular triangulation of the polygonal bounded Lipschitz domain $\Omega$ into triangles in $\mathbb R^2$, let $h_T$ be the diameter of a triangle $T \in \T$ and $h_{\max} :=\max_{T \in \T} h_T$. For any $\epsilon>0$, let $\mathbb{T}(\epsilon)$ denote the set of all triangulations $\cT$ with $h_{\max} <\epsilon$. For a non-negative integer $m$, let ${\mathcal P}_{m}(\mathcal{T})$ denote the space of piecewise polynomials of degree at most $m$. Let $\Pi_0$ denote the $L^2$ projection onto the space ${\mathcal P}_0(\mathcal{T})$ of piecewise constants and let $\mathcal{E}$ and $\mathcal{V}$ be the set of edges and vertices of $\T$, respectively. The set of all internal edges (resp. boundary edges) of $\cE $ is denoted by $\cE (\Omega)$ (resp. $\cE (\partial\Omega)$). Denote the set of internal vertices (resp. vertices on the boundary) of $\cT $ by $ \mathcal V (\Omega)$ (resp.  $ \mathcal V(\partial\Omega)$). The nonconforming Morley finite element space $\M(\T)$  is defined by
\[
\M(\T)=\left \{ \varphi_\M \in {\mathcal P}_2(\T){{\Bigg |}}
\begin{aligned}
& \varphi_\M \text{ is continuous at } \mathcal V (\Omega) \text{ and vanishes at } \mathcal V(\partial\Omega) \\
& \forall E\in \cE(\Omega),\; \int_{E}\left[\frac{\partial \varphi_{\rm M}}{\partial n}\right]_E\ds=0;\;\forall E\in \cE (\partial\Omega),\; \int_{E}\frac{\partial \varphi_{\rm M}}{\partial n}\ds=0
\end{aligned}
\right\}
\] 
where $n$ denotes the unit outward normal to the boundary $\partial \O$ of $\O$ and $\left[\varphi_{\rm M}\right]_E$ is the jump of $\varphi_{\rm M}$ across any interior edge $E$. Let the Morley element space $\M(\T)$ be equipped with the piecewise energy norm $\trinl\bullet\trinr_{\pw}$ defined by $ \trinl\varphi_{\rm M}\trinr_{\pw}^2:=\sum_{T\in \mathcal{T}}\|D_{\rm pw}^2\varphi_{\rm M}\|_{L^2(T)}^2$ for any $\varphi_{\rm M}\in \M(\T)$, where for $j=0,1,2;$ let $D_{\rm pw}^j$ be defined as $D_{\rm pw}^0\varphi_{\rm M}=\varphi_{\rm M}$, $D_{\rm pw}^1\varphi_{\rm M}=\nabla_{\rm pw}\varphi_{\rm M}$, and $D_{\rm pw}^2\bullet$ is the piecewise Hessian. Given the obstacle $\chi\in H^2(\O)$ with $\max\chi(\partial\Omega)<0$, define the discrete analogue \cite{brenner2013morley}
$$K(\chi,\T):=\Big\{\varphi_\M \in \M(\T)\,\big| \,\,\chi(p) \le \varphi_\M(p) \fl p \in \mathcal V\Big\}$$
to $K$. The Morley nonconforming FEM for \eqref{wform} seeks $(u_\M,v_\M) \in K(\chi,\T)\times \M(\T)$ such that
\begin{subequations}\label{Morleywform}
	\begin{align}
	& a_\pw(u_\M,u_\M-\varphi_1)+ 2b_\pw(u_\M,u_\M-\varphi_1, v_\M)\le (f,u_\M-\varphi_1)_{L^2(\O)}   \; \; \fl \varphi_1\in K(\chi,\T)\label{Morleywforma},\\
	& a_\pw(v_\M,\varphi_2)-b_\pw(u_\M,u_\M,\varphi_2)   =0       \; \;     \fl \varphi_2 \in \M(\T)\label{Morleywformb}.
	\end{align}
\end{subequations}
Here and throughout this paper, for all $ \eta_\M,w_\M, \varphi_\M \in \M(\T)$, define 
\begin{align}
&a_\pw(\eta_\M,\varphi_\M):=\int_{\O} D_{\rm pw}^2 \eta_\M:D_{\rm pw}^2\varphi_\M {\,dx}, \nonumber\\
& b_\pw(\eta_\M,w_\M,\varphi_\M):=-\half\int_{\O} [\eta_\M,w_\M]_{\rm pw}\varphi_\M \,dx.\label{defn-apw}
\end{align}
Note that $b_\pw(\bullet,\bullet,\bullet)$ is symmetric with respect to the first two arguments.

\begin{lem}[\rm Morley interpolation \cite{carstensen2014discrete,gallistl2014morley, carstensen2018prove}] \label{Morley_Interpolation} 
The Morley interpolation $I_{\rm M}: H^2_0(\O) \rightarrow {\rm M}(\T)$ is defined, for $\varphi\in H^2_0(\O)$, by (the degrees of freedom for the Morley finite element)
\begin{align*}
& (I_{\rm M} \varphi)(z)=\varphi(z) \text{ for any } z\in\mathcal{V} \text{ and }
\int_E\frac{\partial I_{\rm M} \varphi}{\partial n_E}\ds=\int_E\frac{\partial \varphi}{\partial n_E}\ds \text{ for any  edge } E\in\mathcal{E},
\end{align*}
and satisfies $(a)$-$(c)$ for all $\psi\in H^2(T)$, $T\in\T$, and all $\varphi \in H^2_0(\O) \cap H^{2+\alpha}(\Omega)$.
\begin{itemize}
	\item[$(a)$] (integral mean property of the Hessian) $D^2_{\rm pw} I_{\rm M} =\Pi_0 D_{\rm pw}^2$ in $H^2_0(\O)$,
	\item[$(b)$] (approximation and stability)
	$$\|h_T^{-2}(1-I_{\rm M})\psi\|_{L^2(T)}+\|h_T^{-1}D_{\rm pw}(1-I_{\rm M})\psi\|_{L^2(T)} \lesssim
	\|D_{\rm pw}^2 (1-I_{\rm M})\psi\|_{L^2(T)},$$
	\item[$(c)$] $\|D_{\rm pw}^2 (1-I_{\rm M})\varphi\|_{L^2(\Omega)}\lesssim h_{\max}^{\alpha} \| \varphi\|_{{2+\alpha}}$.
	
\end{itemize}  
\end{lem}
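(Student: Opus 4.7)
The plan is to establish the three items in sequence, with (a) providing the algebraic identity that drives both (c) and the local estimates of (b). Since the statement is standard and the cited references already contain complete arguments, the sketch emphasises only the mechanism.

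For (a), I would fix a triangle $T\in\T$ and any constant symmetric matrix $P$ on $T$, then argue that $\int_T D^2(I_{\rm M}\varphi-\varphi):P\,dx=0$. Two integrations by parts convert this volume term into boundary contributions: since $P$ is constant, $\int_T D^2\psi:P\,dx=\int_{\partial T}(Pn)\cdot\nabla\psi\,ds$, and decomposing $\nabla\psi=(\partial_n\psi)n+(\partial_t\psi)t$ together with tangential integration on each edge $E\subset\partial T$ reduces the expression to a linear combination of $\int_E\partial_n\psi\,ds$ and the vertex values $\psi(z)$. Both vanish for $\psi:=(I_{\rm M}\varphi-\varphi)$ by the defining Morley degrees of freedom. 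Since $D^2_{\rm pw}I_{\rm M}\varphi\in \mathcal{P}_0(\T)$, this identifies it as the $L^2$ projection of $D^2_{\rm pw}\varphi$ onto $\mathcal{P}_0(\T)$.

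For (b), I would work on a reference triangle $\widehat T$ and exploit the unisolvence of the six Morley degrees of freedom on $\mathcal{P}_2(\widehat T)$: the local Morley interpolation is a bounded projection and preserves quadratic polynomials. A standard Bramble--Hilbert argument on $\widehat T$ combined with an affine scaling to $T$ yields the equivalence of $\|D^2_{\rm pw}(1-I_{\rm M})\psi\|_{L^2(T)}$ with the respective scaled $L^2$ and $H^1$ norms of $(1-I_{\rm M})\psi$ up to a constant depending only on shape regularity. The scaling-invariant form of these estimates is precisely the inequality claimed in (b).

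For (c), I would combine (a) with the approximation properties of the $L^2$ projection onto piecewise constants. By (a), $D^2_{\rm pw}(1-I_{\rm M})\varphi=(1-\Pi_0)D^2\varphi$, and for $\varphi\in H^{2+\alpha}(\Omega)$ the matrix field $D^2\varphi$ lies in $H^\alpha(\Omega)$. The standard piecewise-constant approximation bound $\|(1-\Pi_0)w\|_{L^2(T)}\lesssim h_T^\alpha |w|_{H^\alpha(T)}$ summed over $T\in\T$ delivers $\|D^2_{\rm pw}(1-I_{\rm M})\varphi\|_{L^2(\Omega)}\lesssim h_{\max}^\alpha\|D^2\varphi\|_{H^\alpha(\Omega)}\lesssim h_{\max}^\alpha\|\varphi\|_{2+\alpha}$. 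The only delicate point is (a): once the boundary-term computation is carried out carefully, the rest is routine scaling and fractional-order approximation.
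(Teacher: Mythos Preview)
The paper does not supply its own proof of this lemma; it merely cites \cite{carstensen2014discrete,gallistl2014morley,carstensen2018prove} and states the result. Your sketch is a correct rendition of the standard argument found in those references: the integration-by-parts computation for (a) is exactly the usual one, the Bramble--Hilbert/scaling route for (b) is the textbook mechanism, and deriving (c) from (a) via $D^2_{\rm pw}(1-I_{\rm M})\varphi=(1-\Pi_0)D^2\varphi$ together with the fractional-order estimate for $\Pi_0$ is precisely how the cited works proceed.
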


\begin{lem}[\rm Enrichment/Conforming Companion \label{hctenrich} \cite{gallistl2014morley, carstensen2018prove}] There exists a linear operator $E_{\rm M}:{\rm M}(\T)\to H^2_0(\O)$ such that any $\varphi_{\rm M} \in {\rm M}(\T)$ satisfies $(a)$-$(d)$ with a universal constant $\Lambda$ that depends on the shape-regularity of $\T$ but not on the mesh-size $h_\T\in {\mathcal P}_2(\T)$.\\
	$(a)$ $I_{\rm M} E_{\rm M} \varphi_{\rm M}= \varphi_{\rm M}$, \qquad $(b)$ $\Pi_0(\varphi_{\rm M} - E_{\rm M}  \varphi_{\rm M}) =0$,\qquad $(c)$ $\Pi_0D_{\rm pw}^2(\varphi_{\rm M} - E_{\rm M}  \varphi_{\rm M}) =0$, \\
	$(d)$ $\sum_{j=0}^2\| h_\T^{j-2}D_{\rm pw}^j(\varphi_{\rm M}-E_{\rm M}\varphi_{\rm M})\|_{L^2(\Omega)} \le \Lambda \min_{\varphi \in H^2_0(\O)}\| D_{\rm pw}^2(\varphi_{\rm M}-\varphi)\|_{L^2(\Omega)}.$
\end{lem}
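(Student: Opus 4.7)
The plan is to construct $E_{\rm M}$ in two stages: a primary $C^1$-conforming lift that already matches the Morley degrees of freedom, followed by a local bubble correction that enforces the $\Pi_0$-orthogonality conditions.

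\emph{Stage~1 (primary lift).} First I would introduce an intermediate operator $J:{\rm M}(\T)\to H^2_0(\O)$ mapping into the Hsieh--Clough--Tocher (or an equivalent) $C^1$ macro-element space. Its nodal values are taken to equal $\varphi_{\rm M}(z)$ at every vertex $z\in\cV$, the nodal gradients are obtained by averaging $\nabla_{\rm pw}\varphi_{\rm M}$ over the triangles incident to $z$, and the edge data are chosen so that $\int_E \partial_n (J\varphi_{\rm M})\,ds=\int_E \partial_n \varphi_{\rm M}\,ds$ for every $E\in\cE$. By construction $I_{\rm M} J\varphi_{\rm M}=\varphi_{\rm M}$, so property (a) already holds for $J$. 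A reference-element scaling argument then yields the local stability
\[
\textstyle\sum_{j=0}^2 \| h_\T^{j-2} D_{\rm pw}^j(\varphi_{\rm M}-J\varphi_{\rm M})\|_{L^2(T)} \lesssim \|D_{\rm pw}^2 \varphi_{\rm M}\|_{L^2(\omega_T)}
\]
on a local patch $\omega_T$ of $T$.

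\emph{Stage~2 (bubble correction).} On each triangle $T$ I would then select a four-dimensional subspace $\fB_T\subset H^2_0(T)$ of $C^1$ bubble functions whose elements vanish at the vertices of $T$ and have vanishing edge integrals of the normal derivative (so that adding them cannot spoil property (a)). Four linear equations per triangle---one for $\Pi_0(\varphi_{\rm M}-E_{\rm M}\varphi_{\rm M})\vert_T=0$ and three for the independent entries of the symmetric tensor $\Pi_0 D_{\rm pw}^2(\varphi_{\rm M}-E_{\rm M}\varphi_{\rm M})\vert_T=0$---determine the bubble coefficients uniquely, unisolvence being checked by a finite-dimensional polynomial argument on the reference triangle. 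Adding the resulting bubble field to $J\varphi_{\rm M}$ defines $E_{\rm M}\varphi_{\rm M}$; it preserves (a) and enforces (b) and (c) by construction.

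\emph{Stage~3 (stability, the main obstacle).} The hard part is the stability bound (d). A reference-element estimate bounds the bubble coefficients by the Stage~1 residual and $\|\Pi_0 D_{\rm pw}^2\varphi_{\rm M}\|_{L^2(T)}$, so combining with Stage~1 first yields the crude estimate
\[
\textstyle\sum_{j=0}^2 \| h_\T^{j-2} D_{\rm pw}^j(\varphi_{\rm M}-E_{\rm M}\varphi_{\rm M})\|_{L^2(\Omega)} \lesssim \trinl\varphi_{\rm M}\trinr_{\rm pw}.
\]
To upgrade the right-hand side to the best-approximation form, I exploit linearity of $E_{\rm M}$: for any $\varphi\in H^2_0(\O)$, apply the crude bound to the Morley function $\varphi_{\rm M}-I_{\rm M}\varphi\in{\rm M}(\T)$ and combine with the identity $\varphi_{\rm M}-E_{\rm M}\varphi_{\rm M}=(\varphi_{\rm M}-I_{\rm M}\varphi)-E_{\rm M}(\varphi_{\rm M}-I_{\rm M}\varphi)+(I_{\rm M}\varphi-E_{\rm M} I_{\rm M}\varphi)$, where the last difference is estimated through Lemma~\ref{Morley_Interpolation}(a)--(b) and the stability of Stage~1 applied to $I_{\rm M}\varphi$. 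A triangle inequality absorbs all $\varphi$-dependent remainders into $\|D_{\rm pw}^2(\varphi_{\rm M}-\varphi)\|_{L^2(\Omega)}$; taking the infimum over $\varphi\in H^2_0(\O)$ closes the proof.
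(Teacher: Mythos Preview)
The paper does not prove this lemma; it is quoted from \cite{gallistl2014morley, carstensen2018prove}. Your two-stage construction (HCT lift plus bubble correction) follows the architecture of those references, but the proposal contains two concrete gaps.

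\medskip
\noindent\textbf{Stage 2: unisolvence fails.} Any bubble $b\in H^2_0(T)$ satisfies $\int_T\partial_{ij}b\,dx=\int_{\partial T}(\partial_j b)\,n_i\,ds=0$ by integration by parts, so the map $b\mapsto\Pi_0 D^2 b|_T$ is identically zero on $H^2_0(T)$ and no four-dimensional bubble space can resolve the three Hessian conditions. The correct observation is that (c) is a \emph{consequence} of (a): since $E_{\rm M}\varphi_{\rm M}\in H^2_0(\O)$, Lemma~\ref{Morley_Interpolation}(a) gives $\Pi_0 D^2 E_{\rm M}\varphi_{\rm M}=D^2_{\rm pw}I_{\rm M}E_{\rm M}\varphi_{\rm M}=D^2_{\rm pw}\varphi_{\rm M}$. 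Hence only the single scalar condition for (b) needs correction, and one $H^2_0(T)$ bubble with nonzero mean per triangle suffices.

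\medskip
\noindent\textbf{Stage 3: the upgrade does not close.} In your decomposition the term $I_{\rm M}\varphi-E_{\rm M}I_{\rm M}\varphi$ depends only on $\varphi$, not on $\varphi_{\rm M}$, so it cannot be absorbed into $\trinl\varphi_{\rm M}-\varphi\trinr_{\rm pw}$ from the crude estimate $\lesssim\trinl I_{\rm M}\varphi\trinr_{\rm pw}$; bounding it by $\trinl I_{\rm M}\varphi-\varphi\trinr_{\rm pw}$ would already be the assertion (d) for $\varphi_{\rm M}=I_{\rm M}\varphi$, which is circular. The route in the cited references avoids this detour by sharpening the Stage~1 local estimate itself: the averaging defects that drive $\varphi_{\rm M}-J\varphi_{\rm M}$ on $T$ are controlled by the jumps of $\varphi_{\rm M}$ and $\nabla_{\rm pw}\varphi_{\rm M}$ across the edges of the patch $\omega_T$, and since any $\varphi\in H^2_0(\O)$ has zero jumps these coincide with the jumps of $\varphi_{\rm M}-\varphi$, which trace inequalities bound by $\trinl\varphi_{\rm M}-\varphi\trinr_{{\rm pw},\omega_T}$. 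This yields (d) directly.
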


\begin{rem}\label{remInterEnt}
Lemmas \ref{Morley_Interpolation} and \ref{hctenrich} lead for all $\varphi_\M,\,w_\M, \,\psi_\M \in \M(\T)$ and $\psi \in H^2_0(\O)$ to 
$$a_{\rm{pw}}(\varphi_{\rm M}, E_{\rm{M}} \psi_{\rm{M}}-\psi_{\rm{M}})=a_{\rm{pw}}(\varphi_{\rm M}, E_{\rm{M}}I_{\rm{M}} \psi-\psi)=b_{\rm{pw}}(\varphi_{\rm M},w_{\rm M}, E_{\rm{M}} \psi_{\rm{M}}-\psi_{\rm{M}})=0.$$
\end{rem}
\begin{lem} [\rm Bounds for $a_{\rm{pw}}(\bullet,\bullet)$ {\cite[Lemmas 4.2, 4.3]{brenner2013morley}}]\label{ah.bound} Any $\varphi \in H^{2+\alpha}(\Omega)$, $\psi \in H^2_0(\O)\cap H^{2+\alpha}(\Omega)$, $\psi_{\rm M}, \varphi_{\rm M} \in {\rm M}(\T)$ satisfy $(a)$-$(c).$
\begin{itemize}
	\item[$(a)$] $a_{\rm{pw}}(\varphi,E_{\rm M}\psi_{\rm M}-\psi_{\rm M})\lesssim h_{\max}^\alpha\|\varphi \|_{2+\alpha}\| \psi_{\rm M} \|_{\rm pw},$
	\item[$(b)$]$a_{\rm{pw}}(\varphi,I_{\rm M} \psi-\psi)\lesssim h_{\max}^{2\alpha}\|\varphi \|_{2+\alpha}\| \psi \|_{2+\alpha},$
	\item[$(c)$] 
	the scalar product $a_{\rm pw}(\bullet,\bullet)$ is elliptic in the sense that
	$a_{\rm{pw}}(\varphi_{\rm{M}},\varphi_{\rm{M}}) =  \trinl\varphi_{\rm{M}}\trinr^{2}_{\rm pw}$.
\end{itemize}
\end{lem}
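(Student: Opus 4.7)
The plan is to handle the three parts separately; part (c) is immediate from the definitions, and both (a) and (b) rest on the same trick of inserting the piecewise $L^2$-projection $\Pi_0$ of the Hessian and exploiting orthogonality. For (c) I would simply evaluate $a_{\rm pw}(\bullet,\bullet)$ from \eqref{defn-apw} on the diagonal to read off
\[
a_{\rm pw}(\varphi_{\rm M},\varphi_{\rm M})=\int_\Omega D^2_{\rm pw}\varphi_{\rm M}:D^2_{\rm pw}\varphi_{\rm M}\,dx=\|D^2_{\rm pw}\varphi_{\rm M}\|_{L^2(\Omega)}^2=\trinl\varphi_{\rm M}\trinr_{\rm pw}^2,
\]
which is the claim.

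For (b) I would invoke the integral-mean property of Lemma \ref{Morley_Interpolation}(a), that is $D^2_{\rm pw} I_{\rm M}\psi=\Pi_0 D^2\psi$, to rewrite
\[
a_{\rm pw}(\varphi,I_{\rm M}\psi-\psi)=\int_\Omega D^2\varphi:(\Pi_0 D^2\psi-D^2\psi)\,dx.
\]
Because $(1-\Pi_0)D^2\psi$ is $L^2$-orthogonal to the piecewise constants, I may freely replace $D^2\varphi$ by $(1-\Pi_0)D^2\varphi$ in the first factor. A Cauchy-Schwarz inequality together with the triangle-wise approximation estimate $\|(1-\Pi_0)D^2\phi\|_{L^2(T)}\lesssim h_T^\alpha\|\phi\|_{H^{2+\alpha}(T)}$, applied to both $\varphi$ and $\psi$, then produces two $h_{\max}^\alpha$ factors and gives the stated $h_{\max}^{2\alpha}$ bound.

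For (a) I would proceed in the same spirit but with Lemma \ref{hctenrich}(c), namely $\Pi_0 D^2_{\rm pw}(E_{\rm M}\psi_{\rm M}-\psi_{\rm M})=0$, used this time to insert $\Pi_0$ on the opposite side:
\[
a_{\rm pw}(\varphi,E_{\rm M}\psi_{\rm M}-\psi_{\rm M})=\int_\Omega (1-\Pi_0)D^2\varphi:D^2_{\rm pw}(E_{\rm M}\psi_{\rm M}-\psi_{\rm M})\,dx.
\]
Cauchy-Schwarz and the same $L^2$-approximation bound control the first factor by $h_{\max}^\alpha\|\varphi\|_{2+\alpha}$; Lemma \ref{hctenrich}(d), specialized to the test function $\varphi=0\in H^2_0(\Omega)$, controls the second factor by $\Lambda\,\|\psi_{\rm M}\|_{\rm pw}$, so the asserted estimate follows.

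No serious obstacle is expected, as each item is a standard verification. The only point that requires a little care is that the function $\varphi\in H^{2+\alpha}(\Omega)$ in parts (a) and (b) is not assumed to vanish on $\partial\Omega$; this causes no problem because the projection estimate $\|(1-\Pi_0)D^2\varphi\|_{L^2(T)}\lesssim h_T^\alpha\|\varphi\|_{H^{2+\alpha}(T)}$ is purely local on each triangle and does not require any boundary condition on $\varphi$.
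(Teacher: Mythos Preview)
The paper does not give its own proof of this lemma; it is stated with a citation to \cite[Lemmas~4.2, 4.3]{brenner2013morley} and used as a black box. Your argument is correct and is essentially the standard one: the orthogonality tricks via $\Pi_0$ (using Lemma~\ref{Morley_Interpolation}(a) for part~(b) and Lemma~\ref{hctenrich}(c) for part~(a)), the local fractional approximation estimate $\|(1-\Pi_0)D^2\phi\|_{L^2(T)}\lesssim h_T^\alpha\|\phi\|_{H^{2+\alpha}(T)}$, and the stability bound from Lemma~\ref{hctenrich}(d) with test function $0$ are exactly the ingredients one expects, and part~(c) is indeed immediate from the definition.
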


\noindent Recall from \eqref{ctsembedding}, the Sobolev (resp. Friedrichs) constant $C_{\rm S}$ in the Sobolev embedding $H^2_0(\O)\hookrightarrow C(\overline{\Omega})$ (resp. {$C_{\rm F}$ in  $H^2_0(\O)\hookrightarrow L^2(\O)$}). Recall the index $\alpha'$ of elliptic regularity. 
	
\begin{thm}[discrete Sobolev and Friedrichs inequalities]\label{thm.discretesobolev} For $0<\alpha'<1$, set $\beta=\alpha'$ and for $1\le\alpha'$ and any $0<\epsilon<1$, set $\beta=1-\epsilon$. Then there exist positive constants $C(\beta)$, and $C(\alpha')$ such that $C_{\rm dS}:=C_{\rm S}+C(\beta)h_{\max}^{\beta}$ and $C_{\rm dF}:=C_{\rm F}+C(\alpha')h_{\max}^{\alpha'}$ satisfy for any $v+v_{\rm M} \in H^2_0(\O)+{\rm M}(\T)$
	$$(a)\,\| v+v_{\rm M} \|_{L^\infty(\O)} \le C_{\rm dS}\trinl v+v_{\rm M} \trinr_{\rm pw}\quad\text{and}\quad(b)\,\| v+v_{\rm M} \|_{L^2(\O)} \le C_{\rm dF}\trinl v+v_{\rm M} \trinr_{\rm pw}.$$
\end{thm}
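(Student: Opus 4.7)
The plan is to decompose via the conforming companion $E_{\rm M}$ from Lemma~\ref{hctenrich}: write
\[
v + v_{\rm M} = (v + E_{\rm M} v_{\rm M}) + (v_{\rm M} - E_{\rm M} v_{\rm M}),
\]
so that the first summand lies in $H^2_0(\Omega)$ while the second is a piecewise polynomial remainder quantitatively controlled by Lemma~\ref{hctenrich}(d). Applying the continuous Sobolev (resp.\ Friedrichs) embedding \eqref{ctsembedding} to the conforming summand produces the $C_{\rm S}$ (resp.\ $C_{\rm F}$) leading term, and the remainder supplies the $h_{\max}^{\beta}$ (resp.\ $h_{\max}^{\alpha'}$) perturbation.

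For the remainder the key move is to pick the specific test function $\varphi := -v \in H^2_0(\Omega)$ in the minimum in Lemma~\ref{hctenrich}(d) to convert the abstract best-approximation estimate into the concrete hierarchy
\[
\sum_{j=0}^{2}\|h_{\T}^{j-2}D^{j}_{\rm pw}(v_{\rm M}-E_{\rm M}v_{\rm M})\|_{L^{2}(\Omega)} \le \Lambda\trinl v+v_{\rm M}\trinr_{\rm pw}.
\]
The $j=0$ slot gives $\|v_{\rm M}-E_{\rm M}v_{\rm M}\|_{L^{2}(\Omega)} \le \Lambda h_{\max}^{2}\trinl v+v_{\rm M}\trinr_{\rm pw}$, which absorbs into $C(\alpha')h_{\max}^{\alpha'}$ and settles (b). For (a), combine the same hierarchy with the elementwise polynomial inverse inequality $\|p\|_{L^{\infty}(T)}\lesssim h_{T}^{-1}\|p\|_{L^{2}(T)}$ and shape regularity to obtain $\|v_{\rm M}-E_{\rm M}v_{\rm M}\|_{L^{\infty}(\Omega)} \lesssim h_{\max}\trinl v+v_{\rm M}\trinr_{\rm pw}$.

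The hard part will be to keep the leading constant \emph{exactly} $C_{\rm S}$ (resp.\ $C_{\rm F}$), rather than the inflated $(1+\Lambda)C_{\rm S}$ delivered by the naive triangle inequality $\trinl v+E_{\rm M}v_{\rm M}\trinr \le \trinl v+v_{\rm M}\trinr_{\rm pw}+\trinl v_{\rm M}-E_{\rm M}v_{\rm M}\trinr_{\rm pw}$. My plan is to expand $\trinl v+E_{\rm M}v_{\rm M}\trinr^{2}$ directly and exploit the integral-mean property $\Pi_{0}D^{2}_{\rm pw}(v_{\rm M}-E_{\rm M}v_{\rm M})=0$ from Lemma~\ref{hctenrich}(c): this annihilates the piecewise-constant contribution of the cross term, reducing $a_{\rm pw}(v+v_{\rm M},v_{\rm M}-E_{\rm M}v_{\rm M})$ to an elementwise oscillation of $D^{2}_{\rm pw}(v+v_{\rm M})$ against $D^{2}_{\rm pw}(v_{\rm M}-E_{\rm M}v_{\rm M})$ which can be bounded by $h_{\max}^{\beta}\trinl v+v_{\rm M}\trinr_{\rm pw}^{2}$ via an elementwise approximation estimate, and therefore absorbed into the perturbative term.

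Finally, the dichotomy $\beta=\alpha'$ for $\alpha'<1$ against $\beta=1-\epsilon$ for $\alpha'\ge1$ in (a) reflects the classical failure of the endpoint embedding $H^{1}(\Omega)\hookrightarrow L^{\infty}(\Omega)$ in two space dimensions: the inverse estimate for the remainder is naturally of first order in $h_{\max}$, but balancing it against the available regularity by Gagliardo--Nirenberg interpolation between $H^{2}$ and $L^{2}$ costs an arbitrarily small $\epsilon$ once elliptic regularity saturates at one. In (b) no such endpoint issue intervenes, yielding the clean exponent $\alpha'$.
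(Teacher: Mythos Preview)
Your decomposition $v+v_{\rm M}=(v+E_{\rm M}v_{\rm M})+(v_{\rm M}-E_{\rm M}v_{\rm M})$ and the treatment of the remainder via Lemma~\ref{hctenrich}(d) and an inverse estimate are fine (the paper uses the same inverse-estimate step for that piece). The genuine gap is in your plan for the sharp leading constant. Expanding
\[
\trinl v+E_{\rm M}v_{\rm M}\trinr^2=\trinl v+v_{\rm M}\trinr_{\rm pw}^2
+2a_{\rm pw}(v+v_{\rm M},E_{\rm M}v_{\rm M}-v_{\rm M})
+\trinl E_{\rm M}v_{\rm M}-v_{\rm M}\trinr_{\rm pw}^2
\]
and using $\Pi_0 D^2_{\rm pw}(v_{\rm M}-E_{\rm M}v_{\rm M})=0$ together with the fact that $D^2_{\rm pw}v_{\rm M}$ is piecewise constant reduces the cross term to $\int_\Omega (D^2v-\Pi_0D^2v):D^2_{\rm pw}(E_{\rm M}v_{\rm M}-v_{\rm M})\,dx$. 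But $v$ is merely in $H^2_0(\Omega)$, so $\|(1-\Pi_0)D^2v\|_{L^2(\Omega)}$ carries no power of $h_{\max}$; your claimed bound $h_{\max}^{\beta}\trinl v+v_{\rm M}\trinr_{\rm pw}^2$ would need $v\in H^{2+\beta}$, which is not part of the hypothesis. Moreover the quadratic term $\trinl E_{\rm M}v_{\rm M}-v_{\rm M}\trinr_{\rm pw}^2$ is controlled only by $\Lambda^2\trinl v+v_{\rm M}\trinr_{\rm pw}^2$ and is in general of the same order as the leading term (take $v=0$ and a genuinely nonconforming $v_{\rm M}$). So the expansion still leaves you with a constant of the form $(1+\Lambda)C_{\rm S}$, not $C_{\rm S}$.

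The paper obtains the sharp leading constant by a duality argument rather than by comparing $\trinl v+E_{\rm M}v_{\rm M}\trinr$ to $\trinl v+v_{\rm M}\trinr_{\rm pw}$. One writes $\|v+v_{\rm M}\|_{L^\infty(\Omega)}$ as a supremum over $\varphi\in L^1(\Omega)$ with $\|\varphi\|_{L^1(\Omega)}=1$, solves the biharmonic dual problem $a(z,\cdot)=\langle\varphi,\cdot\rangle$, and introduces the $a_{\rm pw}$-orthogonal projection $w\in H^2_0(\Omega)$ of $v+v_{\rm M}$, which automatically satisfies $\trinl w\trinr\le\trinl v+v_{\rm M}\trinr_{\rm pw}$. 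The term $(w,\varphi)_{L^2(\Omega)}\le C_{\rm S}\trinl w\trinr\le C_{\rm S}\trinl v+v_{\rm M}\trinr_{\rm pw}$ delivers the exact constant $C_{\rm S}$; the remainder $(w-v-v_{\rm M},\varphi)_{L^2(\Omega)}$ picks up the factor $h_{\max}^{\beta}$ from the elliptic regularity $z\in H^{2+\beta}(\Omega)$ of the dual solution (via the shift theorem with data in $H^{-(1+\epsilon)}(\Omega)$, whence the dichotomy on $\beta$). The crucial $h_{\max}^{\beta}$ therefore comes from the \emph{dual} regularity, not from any smoothness of the primal $v$, which is where your argument breaks down.
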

\begin{proof}
	The point of the theorem is to get sharp estimates of $C_{\rm dS}$ and $C_{\rm dF}$, otherwise this result is a direct consequence of e.g. \cite[Lemma 4.7]{carstensen2017nonconforming}.
	\medskip
	\noindent\\
	$(a)$ The piecewise uniformly continuous function $ v+v_{\rm M} $ has a maximum norm that is the supremum of all integrals $\int_{\O}  (v+v_{\rm M}) \varphi \,dx$ for $\varphi \in L^1(\O)$ with $\| \varphi \|_{L^1(\O)} = 1.$ Given $\varphi \in L^1(\O)$ with $\| \varphi \|_{L^1(\O)}= 1$, let $z \in H^2_0(\O)$ solve \begin{equation}\label{weak.ds}
	a(z,\bullet)=\langle\varphi,\bullet\rangle_{L^1(\O)},
	\end{equation}
	where the duality $\langle\bullet,\bullet\rangle_{L^1(\O)}$ extends the $L^2$ scalar product. For any $0<\epsilon<1,$ the embedding $H^{1+\epsilon}(\O) \hookrightarrow L^\infty(\O)$ is continuous. This implies $\langle\varphi,\bullet\rangle_{L^1(\O)} \in H^{-(1+\epsilon)}(\O)$. For $0<\alpha'<1$, choose $0<\epsilon<1$ such that $0<\alpha'<1-\epsilon$, set $\beta=\alpha'.$ For $1\le\alpha'$ and any $0<\epsilon<1$, set $\beta=1-\epsilon$. The shift theorem \cite[Theorem 8]{bacuta2002shift} in elliptic regularity shows $z \in H^{2+\beta}(\O)$. With bound $C(\beta,\O)$ of the embedding $H^{1+\epsilon}(\O) \hookrightarrow L^\infty(\O)$ and since $\| \varphi \|_{L^1(\O)} = 1$,
	\begin{align}\label{sup.ds}
	\| z\|_{2+\beta} \lesssim \| \langle\varphi,\bullet\rangle_{L^1(\O)} \|_{H^{-(1+\epsilon)}}=\sup_{0 \neq\psi \in H^{1+\epsilon}_0(\O)}\frac{(\varphi,\psi)_{L^2(\O)}}{\| \psi \|_{L^{\infty}(\O)}}\frac{\| \psi \|_{L^{\infty}(\O)}}{\| \psi \|_{1+\epsilon}} \le C(\beta,\O).
	\end{align}
	Given $v \in H^2_0(\O)$ and $v_{\rm M} \in \M(\cT)$, let $w \in H^2_0(\O)$ solve $a_\pw(w,\bullet)=a_\pw(v+v_{\rm M},\bullet) \in H^{-2}(\O).$ Set $\delta:=w-v-v_{\rm M}$ and recall $a_\pw(\delta,z)=0.$ This, $\| \varphi \|_{L^1(\O)} = 1$, and the Sobolev constant $C_{\rm S}$ lead to
	\begin{align*}
	(v+v_{\rm M}, \varphi)_{{L^2(\O)}} =(w,&\varphi)_{L^2(\O)} -(\delta,\varphi)_{L^2(\O)} \le \|w\|_{L^\infty(\O)}-(\delta,\varphi)_{L^2(\O)}\nonumber\\
	& \le C_{\rm S} \trinl w \trinr-(\delta,\varphi)_{L^2(\O)}\le C_{\rm S} \trinl v+v_{\rm M} \trinr_\pw-(\delta,\varphi)_{L^2(\O)}.
	\end{align*}
	Since $w-v-E_{\rm M} v_{\rm M} \in H^2_0(\O)$, \eqref{weak.ds}, H\"older inequality, $a_\pw(\delta,z)=0$, Lemma \ref{hctenrich}.$c$-$d$, inverse estimate, Lemma \ref{Morley_Interpolation}.$c$, and \eqref{sup.ds} read
	\begin{align*}
	(\delta,\varphi)_{L^2(\O)}
	&\le a(z,w-v-E_{\rm M} v_{\rm M} ) + \| \varphi \|_{L^1(\O)} \|E_{\rm M}v_{\rm M}-v_{\rm M} \|_{L^\infty(\O)}\\
	&\le a_\pw(z-I_\M z,v_{\rm M}-E_{\rm M}v_{\rm M})+  C_{\rm inv}\|h_\cT^{-1} (v_{\rm M}-E_{\rm M}v_{\rm M}) \|_{L^2(\O)}\\
	&\lesssim \trinl v+v_{\rm M} \trinr_\pw (h_{\max}^{\beta}\| z\|_{2+\beta}+ C_{\rm inv}\Lambda h_{\max})\lesssim h_{\max}^{\beta}\trinl v+v_{\rm M} \trinr_\pw.
	\end{align*}
	The combination of the last and second-last displayed inequalities conclude the proof of $(a)$ with the constant $C_{\rm S}+C(\beta)h_{\max}^{\beta}$.
	
	\smallskip
	\noindent
	$(b)$ Given any $\varphi \in L^2(\O)$ with $\| \varphi \|_{L^2(\O)}= 1$, let $z \in H^2_0(\O)$ solve $a(z,\bullet)=(\varphi,\bullet)_{L^2(\O)} \in L^{2}(\O).$ Then, $z \in H^{2+\alpha'}(\O)$ \cite[Theorem 2]{blum1980boundary}. Note that
	\begin{align*}
	\|v+v_{\rm M}\|_{L^2(\O)} =\sup_{\substack{\varphi \in L^2(\O)\\\| \varphi\|_{L^2(\O)}=1}}{(v+v_{\rm M},\varphi)_{L^2(\O)}}.
	\end{align*}
	Analogous arguments of part $(a)$ apply with $\|w\|_{L^2(\O)} \le C_{\rm F} \trinl w\trinr$ and replace $L^{\infty}(\O)$ and $\beta$ by $L^{2}(\O)$ and $\min\{\alpha',4\}$, respectively. This concludes the proof of $(b)$ with the constant $C_{\rm F}+C(\alpha')h_{\max}^{\alpha'}$.
\end{proof}

\begin{lem}[\rm Bounds for $b_{\rm{pw}}(\bullet,\bullet,\bullet)$ {\cite[Lemma 2.6]{ carstensen2019adaptive}}]\label{Bound bpw}
Any $\eta,\varphi,\phi \in H^2_0(\O)+{\rm M}(\T)$ satisfy\\\\          \medskip\qquad\qquad\qquad\qquad\qquad $b_{\rm{pw}}(\eta,\varphi,\phi) \le C_{\rm dS} \trinl\eta \trinr_{\rm{pw}}\trinl\varphi\trinr_{\rm{pw}}\trinl\phi\trinr_{\rm{pw}}.$
\end{lem}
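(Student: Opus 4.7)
The plan is to reduce the estimate to three ingredients: a pointwise Cauchy--Schwarz on the von K\'arm\'an bracket, H\"older's inequality in the $L^1$--$L^\infty$ pairing, and the discrete Sobolev embedding from Theorem \ref{thm.discretesobolev}. Note that because $H^2_0(\O)+{\rm M}(\T) \subset H^2(\O,\cT)$, the piecewise Hessian $D^2_{\rm pw}$ and the piecewise bracket $[\bullet,\bullet]_{\rm pw}$ are well defined on the generic sum, so $b_{\rm pw}(\eta,\varphi,\phi)$ is meaningful and admits the representation in \eqref{defn-apw}.

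First I would establish the pointwise bound $|[\eta,\varphi]_{\rm pw}(x)| \le |D^2_{\rm pw}\eta(x)|\,|D^2_{\rm pw}\varphi(x)|$ on almost every $x \in T$ for each $T \in \T$, with $|\bullet|$ the Frobenius norm. This is recognized by writing
\[
[\eta,\varphi]_{\rm pw} = \eta_{xx}\varphi_{yy} + \eta_{yy}\varphi_{xx} - 2\eta_{xy}\varphi_{xy} = a \cdot b
\]
as a Euclidean dot product of the vectors $a = (\eta_{xx},\eta_{yy},\sqrt{2}\,\eta_{xy})^\top$ and $b = (\varphi_{yy},\varphi_{xx},-\sqrt{2}\,\varphi_{xy})^\top$, whose Euclidean norms respectively equal $|D^2_{\rm pw}\eta(x)|$ and $|D^2_{\rm pw}\varphi(x)|$, followed by the Cauchy--Schwarz inequality in $\mathbb{R}^3$.

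Next, applying the definition of $b_{\rm pw}$ in \eqref{defn-apw} together with H\"older's inequality with exponents $1$ and $\infty$ gives
\[
|b_{\rm pw}(\eta,\varphi,\phi)| \le \tfrac{1}{2}\|[\eta,\varphi]_{\rm pw}\|_{L^1(\O)}\,\|\phi\|_{L^\infty(\O)},
\]
and a second application of Cauchy--Schwarz (now in $L^2(\O)$, element by element) upgrades the pointwise bracket bound to $\|[\eta,\varphi]_{\rm pw}\|_{L^1(\O)} \le \trinl\eta\trinr_{\rm pw}\trinl\varphi\trinr_{\rm pw}$.

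Finally I would invoke the discrete Sobolev inequality, Theorem \ref{thm.discretesobolev}(a), to control $\|\phi\|_{L^\infty(\O)} \le C_{\rm dS}\trinl\phi\trinr_{\rm pw}$ for $\phi \in H^2_0(\O)+{\rm M}(\T)$. Combining the three estimates delivers the claimed bound (absorbing the harmless factor $1/2$ as already done in the continuous analogue of Lemma \ref{Bound b}). There is no real obstacle: the pointwise Cauchy--Schwarz identification of the bracket as a dot product is the only step requiring a brief verification, and everything else is a direct application of results established earlier in this section.
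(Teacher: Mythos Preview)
Your argument is correct. The paper does not supply its own proof of this lemma; it simply cites \cite[Lemma~2.6]{carstensen2019adaptive}. Your three-step route---the pointwise Cauchy--Schwarz identification $|[\eta,\varphi]_{\rm pw}|\le |D^2_{\rm pw}\eta|\,|D^2_{\rm pw}\varphi|$ via the dot-product representation, the $L^1$--$L^\infty$ H\"older inequality, and the discrete Sobolev embedding of Theorem~\ref{thm.discretesobolev}(a)---is exactly the standard proof and is what the cited reference does as well. The factor $1/2$ you observe is genuine (your bound is in fact sharper by that factor), but as you note it is harmless for the stated inequality and is already suppressed in the continuous Lemma~\ref{Bound b}.
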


\subsection{Existence, uniqueness, and {\em a priori} bound of the discrete solution}
This section establishes the well-posedness of the discrete problem \eqref{Morleywform}. The discrete analogue $G_\M: \M(\T) \rightarrow \M(\T)$ of $G$ in \eqref{defnG} is characterised by $G_\M(\phi_\M)\in \M(\T)$ and
$$ a_\pw(G_\M(\varphi_\M),\psi_\M)=b_\pw(\varphi_\M,\varphi_\M,\psi_\M)         \; \;     \fl \varphi_\M,\psi_\M\in \M(\T).$$
This gives rise to the energy functional $j_\pw(\bullet):K(\chi,\T) \rightarrow \R$ defined for all $\xi_\M \in K(\chi,\T)$ by
$$j_\pw(\xi_\M):=\half \trinl\xi_\M\trinr^2_{\rm pw}+\half \trinl G_\M(\xi_\M)\trinr^2_{\rm pw}-(f,\xi_\M)_{L^2(\O)}.$$

\begin{thm}[Existence, {\em a priori} and uniqueness condition]\label{apostexistctsuniqdisc}
Given $(f,\chi)\in L^2(\O)\times H^2(\O)$ with $\max \chi(\partial\Omega)<0$, there exists a minimizer $u_{\rm M} \in K_{\rm M}$ of $j_{\rm pw}(\bullet)$ in $K_{\rm M};$ each minimizer $u_{\rm M}$ and $v_{\rm M}:=G_{\rm M}(u_{\rm M})$ solves \eqref{Morleywform}. There is a positive constant $C_{\rm d}(\chi)$ that depends only on $\chi$ such that any solution $(u_{\rm M},v_{\rm M})$ to \eqref{wform} satisfies $(a)$-$(b).$\\
$(a)$ $\frac{1}{2}\trinl u_{\rm M}\trinr_{\rm pw}^2+\trinl v_{\rm M} \trinr_{\rm pw}^2
\le N_{\rm d}^2(f,u_{\rm M}):=2j_{\rm pw}(u_{\rm M})+2C^2_{\rm dF}\|f\|_{L^2(\O)}^2$

$\qquad\qquad\qquad\qquad\, \le M_{\rm d}^2(f,\chi):=C_{\rm d}(\chi)+3C^2_{\rm dF}\|f\|^2_{L^2(\O)}.$\\
$(b)$ If $C^2_{\rm dS}(\frac{1}{2}\trinl u_{\rm M}\trinr^2+\trinl v_{\rm M}\trinr^2)< (\sqrt{2}-1)^2,$ then $(u_{\rm M},v_{\rm M})$ is the only solution to \eqref{wform}. 
\end{thm}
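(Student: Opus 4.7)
The plan is to mirror the continuous proofs of Theorems \ref{th:existence:contslevel} and \ref{thm.contsdep} at the discrete level, replacing $H^2_0(\O),K,a,b,G,C_{\rm S},C_{\rm F}$ by their discrete counterparts $\M(\T),K(\chi,\T),a_\pw,b_\pw,G_\M,C_{\rm dS},C_{\rm dF}$. First I would establish that $j_\pw$ is bounded from below on $K(\chi,\T)$: the identity defining $G_\M$ together with Lemma \ref{ah.bound}.$c$ gives $2j_\pw(\xi_\M)\ge \trinl\xi_\M\trinr_\pw^2+\trinl G_\M(\xi_\M)\trinr_\pw^2-2\|f\|_{L^2(\O)}\|\xi_\M\|_{L^2(\O)}$, and the discrete Friedrichs inequality of Theorem \ref{thm.discretesobolev}.$b$ yields $2j_\pw(\xi_\M)\ge -C_{\rm dF}^2\|f\|_{L^2(\O)}^2$. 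Since $\M(\T)$ is finite-dimensional, $K(\chi,\T)$ is closed and convex, and $j_\pw$ is continuous and coercive, any minimizing sequence admits a convergent subsequence and the limit $u_\M\in K(\chi,\T)$ minimizes $j_\pw$. The equivalence of this constrained minimization with the variational inequality \eqref{Morleywform} follows by computing the Gateaux derivative of $j_\pw$ along admissible directions $u_\M-\varphi_1\in K(\chi,\T)-u_\M$ and using the symmetry of $b_\pw$ in its first two arguments together with the defining identity for $G_\M$; this is the direct transcription of \cite[Theorem 5.8.3]{ciarlet1997mathematical} and I would omit the details.

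For the a priori bound in $(a)$, the Young inequality applied to $-(f,u_\M)_{L^2(\O)}\le C_{\rm dF}^2\|f\|_{L^2(\O)}^2+\tfrac14\trinl u_\M\trinr_\pw^2$ in the definition of $j_\pw(u_\M)$ reproduces
\[
\tfrac12\trinl u_\M\trinr_\pw^2+\trinl G_\M(u_\M)\trinr_\pw^2\le 2j_\pw(\varphi_\M)+2C_{\rm dF}^2\|f\|_{L^2(\O)}^2
\]
for every admissible competitor $\varphi_\M\in K(\chi,\T)$. The key constructive step is to exhibit one such $\varphi_\M$ depending only on $\chi$. I would take $\varphi:=\chi\psi\in H^2_0(\O)$ from the continuous proof of Theorem \ref{thm.contsdep} (so that $\varphi\ge \chi$ on $\overline\Omega$) and set $\varphi_\M:=I_\M\varphi$. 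Since $\varphi(p)\ge\chi(p)$ at every vertex $p\in\cV$ (using $0\le\psi\le1$, $\psi\equiv1$ on $\{\chi\ge0\}$, and a case split on the sign of $\chi(p)$), one obtains $\varphi_\M\in K(\chi,\T)$. The Morley interpolation stability of Lemma \ref{Morley_Interpolation}.$a$ gives $\trinl\varphi_\M\trinr_\pw\le\trinl\varphi\trinr$; combined with Lemma \ref{Bound bpw} and the defining identity for $G_\M(\varphi_\M)$ this yields $\trinl G_\M(\varphi_\M)\trinr_\pw\le C_{\rm dS}\trinl\varphi_\M\trinr_\pw^2$, which plugged into the displayed inequality and into the definition of $j_\pw(\varphi_\M)$ gives $N_{\rm d}^2(f,u_\M)\le M_{\rm d}^2(f,\chi)$ with
\[
C_{\rm d}(\chi):=2C^2(\psi)\|\chi\|_{H^2(\Omega_+)}^2+C_{\rm dS}^2C^4(\psi)\|\chi\|_{H^2(\Omega_+)}^4.
\]

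For the uniqueness in $(b)$, I would copy the continuous proof verbatim: given two solutions $(u_{\rm M},v_{\rm M})$ and $(\tilde u_{\rm M},\tilde v_{\rm M})$ of \eqref{Morleywform}, set $e_\M=u_\M-\tilde u_\M$, $\delta_\M=G_\M(u_\M)-G_\M(\tilde u_\M)$, choose $\varphi_1=\tilde u_\M$ in \eqref{Morleywforma} for $u_\M$ and $\varphi_1=u_\M$ in \eqref{Morleywforma} for $\tilde u_\M$ (both admissible since $u_\M,\tilde u_\M\in K(\chi,\T)$), add the two inequalities, and then use \eqref{Morleywformb} together with the symmetry of $b_\pw$ in the first two arguments exactly as for \eqref{est.t1}--\eqref{est.t2}. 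Lemma \ref{Bound bpw} with $C_{\rm dS}$ in place of $C_{\rm S}$, plus the weighted Young inequality with the same parameter $\lambda\in(\gamma,\gamma^{-1}-2)$ with $\gamma^2:=C_{\rm dS}^2(\tfrac12\trinl u_\M\trinr_\pw^2+\trinl v_\M\trinr_\pw^2)$, produces the analogue of \eqref{est.uniq} with strictly positive coefficients and forces $\trinl e_\M\trinr_\pw=\trinl\delta_\M\trinr_\pw=0$.

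I expect the main obstacle to be the construction of an admissible discrete competitor $\varphi_\M\in K(\chi,\T)$ with a bound independent of $\T$: the vertex-value constraint $\varphi_\M(p)\ge\chi(p)$ is weaker than the pointwise constraint defining $K$, so the choice $\varphi_\M=I_\M(\chi\psi)$ must be verified directly at the degrees of freedom rather than inherited from $\varphi\in K$; once this verification is carried out, every remaining step is the discrete transcription of the continuous argument and the constants are governed by the sharp bounds $C_{\rm dS},C_{\rm dF}$ supplied by Theorem \ref{thm.discretesobolev}.
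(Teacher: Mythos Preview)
Your proposal is correct and follows the same route as the paper: existence via coercivity of $j_\pw$ on the closed convex set $K(\chi,\T)$, the a~priori bound via the Young inequality and the competitor $\varphi_\M=I_\M(\chi\psi)$ with the very same constant $C_{\rm d}(\chi)=2C^2(\psi)\|\chi\|_{H^2(\Omega_+)}^2+C_{\rm dS}^2C^4(\psi)\|\chi\|_{H^2(\Omega_+)}^4$, and uniqueness by transcribing \eqref{est.t1}--\eqref{est.uniq} with $C_{\rm dS}$ in place of $C_{\rm S}$.

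Two minor remarks. First, your existence argument via finite-dimensionality (continuity plus coercivity on a closed set in $\mathbb R^N$) is in fact simpler than the weak-convergence machinery of Theorem~\ref{th:existence:contslevel}; the paper only says ``analogous arguments'' and does not spell this out. Second, the verification $I_\M\varphi\in K(\chi,\T)$ that you flag as the ``main obstacle'' is immediate and does not require a sign case split: since $\varphi=\chi\psi\in K$ and both $\varphi,\chi\in H^2(\Omega)\hookrightarrow C(\overline\Omega)$, the a.e.\ inequality $\varphi\ge\chi$ holds pointwise, and the Morley interpolation preserves vertex values, so $(I_\M\varphi)(p)=\varphi(p)\ge\chi(p)$ for every $p\in\mathcal V$.
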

\begin{proof}
	Analogous arguments as in Theorem \ref{th:existence:contslevel} show the existence of a minimizer $u_\M$ of $j_\pw(\bullet)$ in $K(\chi,\T)$ and $v_\M:=G_\M(u_\M)$ defines a solution $(u_\M,v_\M)$ to \eqref{Morleywform}. Since $u_\M$ is the global minimizer of $j_\pw(\bullet)$, the Young inequality, and a rearrangement of terms imply for any $\varphi_\M \in K(\chi,\T)$ that
	\begin{align*}
	&\trinl u_\M\trinr_\pw^2+\trinl v_\M\trinr_\pw^2
	\le 2j_\pw(\varphi_\M)+2(f,u_\M)_{L^2(\O)}\le 2j_\pw(\varphi_\M)+2C^2_{\rm dF}\|f\|_{L^2(\O)}^2+\frac{1}{2}\trinl u_\M\trinr^2.
	\end{align*}
 This implies
	\begin{align}\label{apostdiscaprioribound}
	&\frac{1}{2}\trinl u_\M\trinr_\pw^2+\trinl v_\M\trinr_\pw^2
	\le 2j_\pw(\varphi_\M)+2C^2_{\rm dF}\|f\|_{L^2(\O)}^2=:N_{\rm d}^2(f,\varphi_\M).
	\end{align}
	Given $\psi$ and $\varphi:=\chi\psi$ from the proof of Theorem \ref{thm.contsdep}, $\varphi_\M:=I_{\rm M}(\varphi)\in K(\chi,\T).$ The properties of Morley interpolation Lemma \ref{Morley_Interpolation}.$a$, definition of $G_\M(\bullet)$, the bounds of $b_\pw(\bullet,\bullet,\bullet)$ and Lemma \ref{Bound bpw} lead to
	\begin{align*}
	\trinl I_{\rm M}(\varphi)\trinr_\pw\le \trinl  \varphi\trinr\,\,\text{ and }\,\trinl G_\M(I_{\rm M}(\varphi))\trinr_\pw\le C_{\rm dS}\trinl \varphi\trinr^2.
	\end{align*} 
	The combination of above inequalities, the bound of $\trinl\varphi\trinr$ from \eqref{boundvarphi} conclude $(a)$ with $C_d(\chi):=2C^2(\psi)\|\chi\|^2_{H^2(\Omega_+)}+C_{\rm dS}^2C^4(\psi)\|\chi\|^4_{H^2(\Omega_+)}$ and $M_{\rm d}^2(f,\chi):=C_{\rm d}(\chi)+3C^2_{\rm dF}\|f\|^2_{L^2(\O)}$. The {\em a priori} bound in the equation \eqref{apostdiscaprioribound} and $u_{\rm M}$ being the minimizer of $j_\pw(\bullet)$ imply $ N_{\rm d}(f,u_{\rm M})\le N_{\rm d}(f,I_{\rm M}(\varphi))\le M_{\rm d}(f,\chi)$. 
	\medskip
	
	\noindent
	Let  $(u_\M^j,v_\M^j) \in K(\chi,\T)\times \M(\T)$ solve \eqref{Morleywform} for  $j=1,2$ and define $e:=u_\M^1-u_\M^2$ and $\delta:=v_\M^1-v_\M^2$. The test functions $u_\M^2$ (resp. $u_\M^1$) in \eqref{Morleywforma} and $\delta$ in \eqref{Morleywformb}, and a simplification lead to \eqref{est.t1} and \eqref{est.t2} with $\trinl\bullet\trinr,b(\bullet,\bullet,\bullet),u_1,u_2,v_1,v_2$ replaced by $\trinl\bullet\trinr_{\rm pw},b_{\rm pw}(\bullet,\bullet,\bullet),u_\M^1,u_\M^2,v_\M^1,v_\M^2.$ With this substitution, the algebra in \eqref{est.t1}-\eqref{est.uniq} holds verbatim with the further substitution of $C_{\rm S}$ and $M(f,\chi)$ by $C_{\rm dS}$ and $M_{\rm d}(f,\chi).$ Further details are omitted to conclude $e=0=\delta$ and this proves uniqueness.
\end{proof}
\begin{rem}[{\em a priori} and {\em a posteriori} criteria for discrete uniqueness]\label{discsmallnessassumption}
	The {\em a priori} smallness assumption on data $C_{\rm dS}M_{\rm d}(f,\chi)<\sqrt{2}-1$ implies the {\em a posteriori} smallness assumption $C_{\rm dS} N_{\rm d}(f,u_\M)<\sqrt{2}-1$ and so global uniqueness of the solution to \eqref{Morleywform}. 
\end{rem}

\section{{\em A priori} error analysis}
This section establishes an {\em a priori} error estimates of Morley FEM for the \vk\, obstacle problem with small data. 
\subsection{Main result}
Recall $M(f,\chi)$ and $M_{\rm d}(f,\chi)$ from Theorems \ref{thm.contsdep} and \ref{apostexistctsuniqdisc}, the Sobolev and Friedrichs (resp. its discrete versions) constants $C_{\rm S}$ and $C_{\rm F}$ (resp. $C_{\rm dS}$ and $C_{\rm dF}$) from \eqref{ctsembedding} and Theorem \ref{thm.discretesobolev}, and $\beta$, $\alpha$, $C(\beta)$, and $C(\alpha)$ from Theorem \ref{thm.discretesobolev}. The following theorem establishes for small data an {\em a priori} energy norm error estimates that is quasi-optimal plus linear convergence.
\begin{thm}[\rm{Energy norm estimates}] \label{thm.err}
	For a given $f\in L^2(\O),\chi\in C^2(\O)$ with $\max \chi(\partial\Omega)<0$ and $C_{\rm S}M(f,\chi)<\sqrt{2}-1$, there exists a unique solution $(u,v)\in (C^2(\O)\cap H^{2+\alpha}(\O))\times (C^2(\overline{\O})\cap H^{2+\alpha}(\O))$ to \eqref{wform} and positive $\epsilon, C$ such that for any $\cT \in \mathbb{T}(\epsilon)$ with maximal mesh size $h_{\max}$ the solution $(u_{\rm M},v_{\rm M}) \in K_{\rm M}\times \M(\T)$ to \eqref{Morleywform} is unique and satisfies 
	$$\trinl u-u_{\rm M}\trinr_{\rm pw}+ \trinl v-v_{\rm M} \trinr_{\rm pw} \le C\big( \trinl u-I_{\rm M} u\trinr_{\rm pw}+\trinl v-I_{\rm M} v\trinr_{\rm pw}+ h_{\max}\big).$$
\end{thm}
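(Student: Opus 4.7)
The plan has three movements: establish uniqueness of the continuous and discrete problems under the given smallness hypothesis, reduce the error estimate to a bound on the Morley-interpolation remainders $e_{\rm M}:=I_{\rm M} u-u_{\rm M}$ and $\delta_{\rm M}:=I_{\rm M} v-v_{\rm M}$, and exploit the equality \eqref{wformb} together with a corrected enrichment of $u_{\rm M}$ to handle the variational inequality \eqref{wforma}. First, uniqueness and regularity of $(u,v)$ follow from Theorems \ref{thm.contsdep} and \ref{uH2+delta} under the hypothesis $C_{\rm S}M(f,\chi)<\sqrt{2}-1$. For the discrete problem, Theorem \ref{thm.discretesobolev} yields $C_{\rm dS}=C_{\rm S}+\mathcal{O}(h_{\max}^{\beta})$ and an analogous expansion for $M_{\rm d}(f,\chi)$, so for all $\cT\in\mathbb{T}(\epsilon)$ with $\epsilon$ sufficiently small $C_{\rm dS}M_{\rm d}(f,\chi)<\sqrt{2}-1$ holds, whence Theorem \ref{apostexistctsuniqdisc}(b) and Remark \ref{discsmallnessassumption} deliver the unique discrete minimizer $(u_{\rm M},v_{\rm M})$. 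Because $u(p)\ge\chi(p)$ at every vertex $p\in\mathcal V$ and $I_{\rm M}$ preserves vertex values, $I_{\rm M} u\in K(\chi,\T)$; the triangle inequality and Lemma \ref{Morley_Interpolation}(c) then reduce the claim to an estimate of $\trinl e_{\rm M}\trinr_{\rm pw}+\trinl\delta_{\rm M}\trinr_{\rm pw}$.

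The cheaper estimate, on $\delta_{\rm M}$, uses the integral-mean property in Lemma \ref{Morley_Interpolation}(a) to deduce $a_{\rm pw}(I_{\rm M} v-v,\delta_{\rm M})=0$ and hence $\trinl\delta_{\rm M}\trinr_{\rm pw}^{2}=a_{\rm pw}(v,\delta_{\rm M})-b_{\rm pw}(u_{\rm M},u_{\rm M},\delta_{\rm M})$. Testing \eqref{wformb} with $\varphi_2=E_{\rm M}\delta_{\rm M}\in H^2_0(\O)$ rewrites this as the sum of the trilinear difference $b(u,u,\delta_{\rm M})-b_{\rm pw}(u_{\rm M},u_{\rm M},\delta_{\rm M})$, the Morley-consistency remainder $-a_{\rm pw}(v,E_{\rm M}\delta_{\rm M}-\delta_{\rm M})$, and the term $b(u,u,E_{\rm M}\delta_{\rm M}-\delta_{\rm M})$. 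The first is handled by Lemma \ref{Bound bpw} and the splitting $b(u,u,\bullet)-b_{\rm pw}(u_{\rm M},u_{\rm M},\bullet)=b_{\rm pw}(u-u_{\rm M},u,\bullet)+b_{\rm pw}(u_{\rm M},u-u_{\rm M},\bullet)$, the second by Lemma \ref{ah.bound}(a), and the third by the $L^2$-orthogonality $\Pi_0(E_{\rm M}\delta_{\rm M}-\delta_{\rm M})=0$ from Lemma \ref{hctenrich}(b) combined with the regularity $[u,u]\in L^2(\O)$ inherited from Theorem \ref{uH2+delta}. The resulting bound is $\trinl\delta_{\rm M}\trinr_{\rm pw}\lesssim h_{\max}^{\alpha}+h_{\max}+\trinl u-u_{\rm M}\trinr_{\rm pw}$.

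For $\trinl e_{\rm M}\trinr_{\rm pw}$ I would test \eqref{Morleywforma} with $\varphi_1=I_{\rm M} u$ to obtain $-a_{\rm pw}(u_{\rm M},e_{\rm M})-2b_{\rm pw}(u_{\rm M},e_{\rm M},v_{\rm M})\le -(f,e_{\rm M})_{L^2(\O)}$ and, for \eqref{wforma}, construct an admissible lift $\widehat{u}_{\rm M}:=E_{\rm M} u_{\rm M}+\theta_{\rm M}\in K$ with $\trinl\theta_{\rm M}\trinr\lesssim h_{\max}$ following the positivity-preserving correction of \cite[Lemmas 3.3, 3.4]{brenner2012finite} and \cite{brenner2013morley}; here the hypotheses $\chi\in C^2(\overline\O)$ and $\max\chi(\partial\O)<0$ are essential to keep the contact region a positive distance from $\partial\O$. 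Testing \eqref{wforma} with $\varphi_1=\widehat{u}_{\rm M}$, adding the two inequalities, expanding $u-\widehat{u}_{\rm M}=(u-I_{\rm M} u)+e_{\rm M}+(u_{\rm M}-E_{\rm M} u_{\rm M})-\theta_{\rm M}$, and invoking Remark \ref{remInterEnt} to annihilate the contributions $a_{\rm pw}(\bullet,E_{\rm M} e_{\rm M}-e_{\rm M})$ and $b_{\rm pw}(\bullet,\bullet,E_{\rm M} e_{\rm M}-e_{\rm M})$ produces
$$\trinl e_{\rm M}\trinr_{\rm pw}^{2}\lesssim\bigl(h_{\max}^{\alpha}+h_{\max}\bigr)\trinl e_{\rm M}\trinr_{\rm pw}+C_{\rm dS}\bigl(\trinl u\trinr+\trinl v\trinr+\trinl v_{\rm M}\trinr_{\rm pw}\bigr)\bigl(\trinl e_{\rm M}\trinr_{\rm pw}+\trinl\delta_{\rm M}\trinr_{\rm pw}\bigr)\trinl e_{\rm M}\trinr_{\rm pw}.$$
Under the smallness assumption, Theorems \ref{thm.contsdep}(a) and \ref{apostexistctsuniqdisc}(a) bound $\trinl u\trinr,\trinl v\trinr,\trinl u_{\rm M}\trinr_{\rm pw},\trinl v_{\rm M}\trinr_{\rm pw}$ strictly below $C_{\rm S}^{-1}(\sqrt{2}-1)$, and a $\lambda$-weighted Young inequality mirroring the uniqueness argument in Theorem \ref{thm.contsdep}(b) absorbs the coupled nonlinear contribution into the left-hand side. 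Combining with the $\delta_{\rm M}$ estimate, applying the triangle inequality, and using Lemma \ref{Morley_Interpolation}(c) for $u-I_{\rm M} u$ and $v-I_{\rm M} v$ yields the claim.

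The hard part will be the construction of the corrected lift $\widehat{u}_{\rm M}\in K$ with $\trinl\theta_{\rm M}\trinr\lesssim h_{\max}$: the nodal constraint $\varphi_{\rm M}(p)\ge\chi(p)$ does not guarantee $E_{\rm M} u_{\rm M}\ge\chi$ pointwise on $\O$, and the required localized $C^2$-correction of $E_{\rm M} u_{\rm M}$ near the free boundary is precisely what produces the additive linear term $h_{\max}$ on the right-hand side of the estimate.
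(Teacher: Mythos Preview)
Your outline diverges from the paper at the central technical device, and that divergence is a genuine gap. The paper does \emph{not} lift $E_{\rm M}u_{\rm M}$ into $K$. Instead it inserts the auxiliary biharmonic obstacle problem \eqref{wformA} on the relaxed set $K_A=\{\xi\in H^2_0(\Omega):\xi(p)\ge\chi(p)\ \forall p\in\mathcal V\}$ with \emph{vertex-only} constraints, and splits the variational-inequality work across three problems: \eqref{wforma} is tested with $\widehat u_A\in K$, \eqref{wformA} is tested with $E_{\rm M}u_{\rm M}$, and \eqref{Morleywforma} is tested with $I_{\rm M}u$. The point is that $E_{\rm M}u_{\rm M}\in K_A$ is \emph{automatic} from Lemma~\ref{hctenrich}(a), so no correction is needed there; the passage from $K_A$ back to $K$ is encoded entirely in Lemma~\ref{Erru_Tandu_A}, which supplies $u_A$ and its lift $\widehat u_A\in K$ with $\trinl u-u_A\trinr\lesssim h_{\max}$ and $\trinl\widehat u_A-u_A\trinr\lesssim h_{\max}^2$. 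These are the terms that generate the additive $h_{\max}$ in the final estimate.

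Your proposed direct correction $\widehat u_{\rm M}=E_{\rm M}u_{\rm M}+\theta_{\rm M}\in K$ with $\trinl\theta_{\rm M}\trinr\lesssim h_{\max}$ is not what \cite[Lemmas~3.3--3.4]{brenner2012finite} deliver: those lemmas construct $\widehat u_A$ from the auxiliary solution $u_A$, and the smallness of the correction rests on the interior $C^2$ regularity of $u_A$ (from elliptic regularity of \eqref{wformA}), which controls the pointwise violation of the obstacle between vertices by $O(h_{\max}^2)$. The enrichment $E_{\rm M}u_{\rm M}$ has no such a~priori pointwise second-derivative control---you only know $\trinl E_{\rm M}u_{\rm M}\trinr$---so the size of $(\chi-E_{\rm M}u_{\rm M})_+$ away from the vertices cannot be bounded without already knowing the error estimate you are proving. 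A secondary issue: the paper keeps $e=u-u_{\rm M}$ and $\delta=v-v_{\rm M}$ coupled (Steps~2--6) so that the trilinear remainder surfaces exactly as $b_{\rm pw}(e,u,\delta)-b_{\rm pw}(e,e,v)$ in Step~5, the form absorbable under $C_{\rm S}M(f,\chi)<\sqrt2-1$ via the $\gamma$-weighted Young inequality of Theorem~\ref{thm.contsdep}(b). Your decoupled route produces a coefficient of order $C_{\rm dS}(\trinl u\trinr+\trinl v\trinr+\trinl v_{\rm M}\trinr_{\rm pw})\le C_{\rm dS}(\sqrt2+2)M(f,\chi)$ in front of $\trinl e_{\rm M}\trinr_{\rm pw}^2$, and $(\sqrt2+2)(\sqrt2-1)=\sqrt2>1$, so the absorption does not close under the stated hypothesis.
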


\subsection{{\em A priori} error analysis of a shifted biharmonic obstacle problem}
Let $(u,v)$ be a solution to \eqref{wform} with the regularity $u,v\in C^2(\O)\cap H^{2+\alpha}(\O)\cap H^2_0(\O)$ from Theorem \ref{uH2+delta}. The Sobolev embedding $H^{2+\alpha}(\O)\hookrightarrow W^{2,4}(\O)$ leads to $\widetilde{f}:=f+[u,v]\in L^2(\O)$.  The transformed problem seeks $u_L \in K $ such that
\begin{align}
& a(u_L,u_L-\varphi)\le (\widetilde{f},u_L-\varphi)_{L^2(\O)} \; \; \fl \varphi\in K\label{wformT}.
\end{align}
Equivalently,  $u_L$ is a minimizer in $K$ for the energy functional $J_T: H^2_0(\O) \rightarrow \R$, defined by  $J_T(\xi):=\frac{1}{2}a(\xi,\xi)- (\widetilde{f},\xi)_{L^2(\O)}$ for all $H^2_0(\O)$,
\begin{align}
& J_T(u_L)=\min_{\xi \in K} J_T(\xi)\label{energyT}.
\end{align}
\noindent\\
By construction, the solution $u$ to \eqref{wforma} also solves \eqref{wformT}, then uniqueness implies $u_L=u.$ 
\medskip

\noindent
Recall the auxiliary problem from \cite{brenner2012finite} which is a continuous problem with discrete obstacle constraints. Let $K_A:=\{\xi \in H^2_0(\Omega): \forall p\in\cV,\,\,\,\xi(p)\ge\chi(p)\}$ for the set $\mathcal{V}$ of all vertices in the triangulation $\T$. The biharmonic obstacle problem problem with discrete constraints seeks $u_A \in K_A$ such that
\begin{align}
& a(u_A,u_A-\varphi)\le (\widetilde{f},u_A-\varphi)_{L^2(\O)} \; \; \fl \varphi\in K_A\label{wformA}.
\end{align}
Equivalently, $u_A$ is a minimizer for energy functional $J_T:H^2_0(\Omega)\rightarrow\R$ over  set $K_A$,
\begin{align}
& J_T(u_A)=\min_{\xi \in K_A} J_T(\xi)  \; \fl \xi\in K_A.
\end{align}
\noindent
The solution $u_L=u$ to the biharmonic problem \eqref{wformT} and the solution $u_A$ to the corresponding auxiliary problem \eqref{wformA} satisfy the following result. 
\begin{lem}[\rm{Convergence rates} \cite{brenner2012finite}]\label{Erru_Tandu_A} 
Let $\chi\in C^2(\O)\cap C^0(\overline{\O})$ with $\max \chi(\partial\Omega)<0,$ and let $u\in C^2(\O)\cap H^{2+\alpha}(\O)$ solve \eqref{wformT}. Then, there exist an $\epsilon>0$ and $\widehat{u}_A\in K$ such that $\trinl u-u_A\trinr\lesssim h_{\max}\| u\|_{2+\alpha}$ and $\trinl \widehat{u}_A-u_A\trinr\lesssim h_{\max}^2\| u\|_{2+\alpha}$ for any triangulation $\T\in \mathbb{T}(\epsilon)$ with maximal mesh size $h_{\max}$ and the solution $u_A\in K_A$ to \eqref{wformA}.
\end{lem}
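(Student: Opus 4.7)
My plan is a Falk-type analysis of the two variational inequalities \eqref{wformT} and \eqref{wformA}, combined with an explicit construction of a surrogate $\widehat{u}_A \in K$ for $u_A$. The crucial one-sided inclusion $K \subset K_A$ allows the direct choice $\psi = u$ in \eqref{wformA}; since the reverse inclusion fails, probing \eqref{wformT} with something close to $u_A$ requires the corrected version $\widehat{u}_A \in K$. Testing \eqref{wformT} with $\varphi = \widehat{u}_A$ and \eqref{wformA} with $\psi = u$, adding the two inequalities and reorganizing via the identity $a(u, u - \widehat{u}_A) + a(u_A, u_A - u) = \trinl u - u_A \trinr^2 + a(u, u_A - \widehat{u}_A)$ yields
\[
\trinl u - u_A \trinr^2 \leq (\widetilde{f}, u_A - \widehat{u}_A)_{L^2(\Omega)} - a(u, u_A - \widehat{u}_A).
\]
Cauchy--Schwarz together with the Friedrichs inequality bound the right-hand side by $C(\widetilde{f}, u)\,\trinl u_A - \widehat{u}_A \trinr$, so the second assertion $\trinl \widehat{u}_A - u_A \trinr \lesssim h_{\max}^2 \|u\|_{2+\alpha}$ immediately implies the first $\trinl u - u_A \trinr \lesssim h_{\max}\|u\|_{2+\alpha}$ after taking a square root.

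\textbf{Construction of $\widehat{u}_A$.} Since $\max \chi(\partial \Omega) < 0$ and $u \in C^2(\Omega)\cap H^2_0(\Omega)$, the contact set $\{u = \chi\}$ is compactly contained in $\Omega$; a first rough bound $u_A \to u$ shows that, for $h_{\max}$ small, the violation set $\{u_A < \chi\}$ also stays in a fixed compact subset of $\Omega$. Moreover $u_A(p) \geq \chi(p)$ at every vertex $p$ and both $\chi$ and $u_A$ are locally $C^2$, so a nodal Taylor expansion on any offending element $T$ bounds the pointwise deficit by $(\chi - u_A)_+ \lesssim h_T^2$. The plan is then to define $\widehat{u}_A := u_A + w$, where $w$ is a sum of element-wise $H^2_0(T)$-bubble functions whose shape absorbs the local deficit so that $\widehat{u}_A \geq \chi$ everywhere. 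The resulting energy bound $\trinl w \trinr \lesssim h_{\max}^2 \|u\|_{2+\alpha}$ follows from the refined construction in \cite{brenner2012finite}, which is directly transplantable to the present shifted biharmonic obstacle problem.

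\textbf{Main obstacle.} The principal difficulty is producing $\widehat{u}_A$ with the \emph{sharp} $h_{\max}^2$ rate in the energy norm. A naive pointwise lift such as $u_A + (\chi - u_A)_+$ is only Lipschitz and fails to lie in $H^2_0(\Omega)$, while a generic $H^2_0(T)$-bubble of amplitude $O(h_T^2)$ only produces $O(h_T)$ in the energy norm after local scaling. Achieving the optimal rate requires choosing the bubble correction so that its derivatives on $T$, not merely its $L^\infty$-amplitude, reflect the quadratic vanishing of the deficit at the vertices, exploiting the $C^2$-regularity of $\chi$ and $u$. This bookkeeping is the technical core of \cite{brenner2012finite}; the only change required here is to notice that the shifted right-hand side $\widetilde{f} = f + [u,v]$ still lies in $L^2(\Omega)$ thanks to the $H^{2+\alpha}$ regularity of $u, v$ secured in Theorem \ref{uH2+delta}, so the same construction and energy estimate apply verbatim.
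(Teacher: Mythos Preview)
The paper gives no proof of this lemma; it is quoted directly from \cite{brenner2012finite}. Your sketch correctly captures the approach of that reference: the Falk-type identity obtained by testing \eqref{wformT} with $\widehat{u}_A\in K$ and \eqref{wformA} with $u\in K\subset K_A$, reducing the first estimate to the second, followed by the construction of $\widehat{u}_A$ as $u_A$ plus a smooth $O(h_{\max}^2)$ lift supported near the contact region.

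One imprecision worth flagging: $u_A$ is in general \emph{not} $C^2$ at the active vertices (the Lagrange multipliers contribute Dirac masses to $\Delta^2 u_A$), so the pointwise deficit bound $(\chi-u_A)_+\lesssim h_T^2$ cannot be obtained from a Taylor expansion of $u_A$ itself. The argument in \cite{brenner2012finite} instead exploits the $C^2$ regularity of $u-\chi$ (nonnegative, hence with vanishing gradient on the contact set) together with a preliminary convergence $u_A\to u$. You rightly identify this as the technical core and defer to the reference; the construction transplants verbatim to the present shifted problem since $\widetilde f=f+[u,v]\in L^2(\Omega)$ by Theorem~\ref{uH2+delta}.
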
	
\subsection{Proof of the main result}
\begin{proof} 
{\bf Step 1 of the proof} involves a choice of a bound for the discretization parameter for which the uniqueness of solutions to \eqref{wform}, \eqref{Morleywform} and the smallness assumption of Remark \ref{smallnessassumption} hold. Define $\mu:=C_{\rm S} M(f,\chi)<\sqrt{2}-1$ and its discrete analogue $\mu_d:=(C_{\rm S}+C(\beta)h_{\max}^{\beta}) M_{\rm d}(f,\chi) \text{ and } \mu_e:=(C_{\rm S}+C(\beta)h_{\max}^{\beta}) M(f,\chi).$ From Theorem \ref{thm.discretesobolev} it is clear that 
\begin{align}\label{muetomu}
\mu_e-\mu=C(\beta)h_{\max}^{\beta}M(f,\chi)\rightarrow 0 \text{ as } h_{\rm max}\rightarrow 0.
\end{align} This and $\mu:=C_{\rm S} M(f,\chi)<\sqrt{2}-1$ imply that there exists a positive $\epsilon_1$ for which $\mu_e:=C_{\rm dS} M(f,\chi)<\sqrt{2}-1$ for all $h_{\rm max}<\epsilon_1.$

\medskip
\noindent
Theorems \ref{thm.contsdep}, \ref{apostexistctsuniqdisc}, and \ref{thm.discretesobolev} imply
$$M^2_{\rm d}(f,\chi)-M^2(f,\chi)=C(\beta)h_{\max}^{\beta}(2C_{\rm S}+C(\beta)h_{\max}^{\beta})C^4(\psi)\|\chi\|^4_{H^2(\Omega_+)}+3C(\alpha)h_{\max}^{\alpha}\|f\|_{L^2(\O)}$$ 
and $M^2_{\rm d}(f,\chi)-M^2(f,\chi)\rightarrow 0$ as $h_{\rm max} \rightarrow 0$. This and \eqref{muetomu} imply 
$$\mu_d^2-\mu^2=(\mu^2_d-\mu^2_e)+(\mu^2_e-\mu^2)=(C_{\rm S}+C(\beta)h_{\max}^{\beta})^2(M_{\rm d}^2(f,\chi)-M^2(f,\chi))+(\mu^2_e-\mu^2)
\rightarrow 0$$
as $h_{\rm max}\rightarrow 0$. Hence, there exists a positive $\epsilon_2$ such that $\mu_d:=(C_{\rm S}+C(\beta)h_{\max}^{\beta}) M_{\rm d}(f,\chi)<\sqrt{2}-1$ for all $h_{\rm max}<\epsilon_2$.
Finally, $\mu,\mu_d,\mu_e<\sqrt{2}-1$ for any triangulation $\T\in \mathbb{T}(\epsilon :=\min\{\epsilon_1,\epsilon_2\})$. 

\medskip\noindent
The later steps of the proof focuses on the error estimates for triangulations $\T\in \mathbb{T}(\epsilon)$. Set $e:=u-u_{\rm M}$, $\delta:=v-v_{\rm M}$, and the best approximation error 
$\text{RHS}:= \trinl u-I_{\rm M} u\trinr_{\rm pw} + \trinl v-I_{\rm M}v\trinr_{\rm pw}+h_{\rm max}$.\\
{\bf Step 2 of the proof} utilizes elementary algebra to identify two critical terms 
\[T_1:= a(u,E_{\rm M} I_{\rm M} e )+  2a(v,E_{\rm M} I_{\rm M} \delta) \mbox{ and }T_2:= -(a_{\rm pw}( u_{\rm M}, I_{\rm M}e)+2a_{\rm pw}( v_{\rm M}, I_{\rm M}\delta)).\]
The definition of $a_\pw(\bullet,\bullet)$ with elementary algebra shows
\begin{align*}
\trinl e \trinr^2_{\rm pw} & = a_{\rm pw} (e, u-I_{\rm M} u ) + a_{\rm pw} (u, (1-E_{\rm M})I_{\rm M} e )
+ a(u,E_{\rm M} I_{\rm M} e ) -a_{\rm pw}( u_{\rm M}, I_{\rm M}e).
\end{align*}
Lemma \ref{Morley_Interpolation}.$a$ implies $ a_{\rm pw} (e, u-I_{\rm M} u ) = \trinl u-I_\M u\trinr_{\rm pw}^2$. The boundedness of $a_\pw(\cdot,\cdot)$, and Lemma \ref{hctenrich}.$c$-$d$ lead to
\begin{align*}
a_\pw(u,(1-E_{\rm M})I_{\rm M}e) & = a_\pw(u-I_{\rm M}u, (1-E_{\rm M})I_{\rm M}e) \le \Lambda
\trinl u-I_{\rm M}u \trinr_{\rm pw} \trinl e \trinr_{\rm pw}
\end{align*}
with $\trinl (1-E_{\rm M})I_{\rm M} e \trinr_{\rm pw} \lesssim \trinl I_{\rm M} e\trinr_{\rm pw} \le 
\trinl e \trinr_{\rm pw}.$ A combination of the previous estimates leads to
\begin{align*}
\trinl e \trinr^2_{\rm pw} & \le \trinl  u-I_{\rm M} u \trinr_\pw^2+ \Lambda
\trinl u-I_{\rm M}u \trinr_{\rm pw} \trinl e \trinr_{\rm pw}+ a(u,E_{\rm M} I_{\rm M} e ) -a_{\rm pw}( u_{\rm M}, I_{\rm M}e).
\end{align*}
The analogous result with $(u,u_{\rm M},e)$ replaced by $(v,v_{\rm M}, \delta)$ reads
\begin{align*}
\trinl \delta \trinr^2_{\rm pw} &  \le \trinl  v-I_{\rm M} v \trinr_\pw^2+ \Lambda
\trinl v-I_{\rm M}v \trinr_{\rm pw} \trinl \delta\trinr_{\rm pw}+ a(v,E_{\rm M} I_{\rm M} \delta ) -a_{\rm pw}( v_{\rm M}, I_{\rm M}\delta).
\end{align*}
A weighted sum of those two estimates plus the Cauchy-Schwarz inequality shows
\begin{align}\label{estt1t2}
\trinl e \trinr^2_{\rm pw} + 2 \trinl \delta \trinr^2_{\rm pw} & \le 2 \; \text{RHS}^2 + \Lambda (\trinl u-I_{\rm M}u \trinr_{\rm pw} \trinl e\trinr_{\rm pw} + 2  \trinl v-I_{\rm M}v \trinr_{\rm pw} \trinl \delta \trinr_{\rm pw}  ) +T_1+T_2.
\end{align}

\noindent
{\bf Step 3 of the proof} employs three variational inequalities: The test function $\widehat{u}_A \in K$ from Lemma \ref{Erru_Tandu_A} leads in \eqref{wforma} to \[a(u,u-\widehat{u}_A)+2b(u,v,u-\widehat{u}_A)\le (f,u-\widehat{u}_A)_{L^2(\O)}.\]
The test function $\varphi=E_\M u_\M \in K_A$ and the definition of $b(\bullet,\bullet,\bullet)$ lead in \eqref{wformA} to 
\[a(u_A,u_A-E_\M u_\M)\le (f,u_A-E_\M u_\M)_{L^2(\O)}-2b(u,v,u_A-E_\M u_\M).\] 
\noindent
The test functions $\varphi_1=I_{\rm M} u\in K_{\rm M}$ and $\varphi_2=I_\M \delta\in V_\M$ lead in \eqref{Morleywform} to
\[ a_\pw(u_\M,u_\M-I_{\rm M} u)+ 2b_\pw(u_\M,u_\M-I_{\rm M} u, v_\M)\le (f,u_\M-I_{\rm M} u)_{L^2(\O)},\]
\[a_\pw(v_\M,I_\M \delta)-b_\pw(u_\M,u_\M,I_\M \delta)=0.\]
\noindent
The sum of preceeding four displayed estimates lead to one inequality with many terms. 
An elementary, but lengthy algebra leads to an estimate for $(T_1+T_2)/2$ and the crucial terms $b_\pw(e,e,v)-b_\pw(e,u,\delta).$ The following list of identities are employed in the calculation:
$a_\pw(I_{\rm M}u,E_{\rm M}I_{\rm M} u-u)=b_\pw(u_{\rm M},u_{\rm M},v_{\rm M}-E_{\rm M}v_{\rm M})=0$ from  Remark \ref{remInterEnt}; $b_\pw(u_\M, I_\M u-u,\Pi_0v)=0$ from  Lemma \ref{Morley_Interpolation}.$a$ where, $\Pi_0 v\in {\mathcal P}_0(\mathcal{T})$ is the piecewise constant $L^2$ projection of $v$; $b(u_\M,u_\M,E_\M v_\M-v_\M)=b_\pw(I_\M u,I_\M u,v_\M-E_\M v_\M)=0$ from  Lemma \ref{hctenrich}.$b$; $b_\pw(I_\M u,u_\M -E_\M u_\M,\Pi_0 v)=0$ from Lemma \ref{hctenrich}.$c$; and the symmetry $b(u,u,v)=b(u,v,u)$. The resulting inequality is equivalent to
\begin{align}
&\frac{T_1+T_2}{2}+b_\pw(e,e,v)-b_\pw(e,u,\delta)\nonumber\\
&\quad \le \half a_\pw(u-I_{\rm M}u,E_{\rm M}I_{\rm M} u-u)+a_\pw(v-I_{\rm M}v,E_{\rm M}I_{\rm M} v-v)\nonumber\\
&\qquad +\half a(u,\widehat{u}_A-u_A)+a(u-u_A,u_A-E_\M u_\M)-b(u,v,u_A-\widehat{u}_A)\nonumber\\
&\qquad + b_\pw(u-I_\M u,(1 -E_\M) u_\M,v)+b_\pw(I_\M u,(1 -E_\M) u_\M,v-\Pi_0v) \nonumber\\
&\qquad +b_\pw(u-I_\M u,u,(1 -E_\M) v_\M)+b_\pw(I_\M u,u-I_\M u,(1 -E_\M) v_\M)\nonumber\\
&\qquad +b_\pw(u_\M, I_\M u-u,v_\M-v)+b_\pw(u_\M, I_\M u-u,v-\Pi_0v)\nonumber\\
&\qquad +b_\pw(u_\M,u_\M,v-I_\M v)+\half (f,u-\widehat{u}_A+u_A-E_\M u_\M-I_\M e)_{L^2(\O)}\nonumber\\
&\qquad =:T_3+\dots+T_{15}.\label{t1t2}
\end{align}

\noindent {\bf Step 4 of the proof} estimates of the terms $T_3,\dots,T_{15}$ on the right-hand side of \eqref{t1t2} and establishes the bound $C(h_{\max}^{2}+\trinl u-I_\M u\trinr_{\rm pw}^2+\trinl v-I_\M v\trinr_{\rm pw}^2+(\trinl e\trinr_\pw +\trinl \delta\trinr_\pw )\text{RHS})$ with a constant $C\approx 1$ that depends on $\trinl u\trinr, \trinl v\trinr, \| u\|_{2+\alpha}, \| v\|_{2+\alpha}, \|f\|_{L^2(\Omega)}, \Lambda$ and is independent of $h_{\max}$. Elementary algebra lead to first equality in 
$$T_3=a_\pw(u-I_{\rm M}u,(E_{\rm M}-1)I_{\rm M} u)-\trinl u-I_\M u\trinr_{\rm pw}^2\le (\Lambda-1)\trinl u-I_\M u\trinr_{\rm pw}^2,$$
with the Cauchy-Schwarz inequality and Lemma \ref{hctenrich}.$d$ in the final step. The analysis of $v$ replaced by $u$ as in the estimate of $T_3$ reads 
$$T_4=a_\pw(v-I_{\rm M}v,E_{\rm M}I_{\rm M} v-v)\le (1+\Lambda)\trinl v-I_\M v\trinr_{\rm pw}^2.$$ 

\noindent Lemma \ref{Erru_Tandu_A} and the Cauchy-Schwarz inequality show $$T_5+T_7=\half a(u,\widehat{u}_A-u_A)-b(u,v,u_A-\widehat{u}_A)\lesssim h^2_{\max}\| u\|_{2+\alpha}(\trinl u\trinr+\|[u,v]\|_{L^2(\O)}).$$
The Cauchy-Schwarz inequality, a triangle inequality, Lemma \ref{Erru_Tandu_A}, and Lemma \ref{hctenrich}.$d$ lead to
\begin{align*}
T_6=a(u-u_A,u_A-E_\M u_\M)&\le \trinl u-u_A\trinr\trinl u_A-E_\M u_\M\trinr
\lesssim h_{\max}\| u\|_{2+\alpha}\big(h_{\max}\| u\|_{2+\alpha}+(1+\Lambda)\trinl e\trinr_{\rm pw}\big).
\end{align*}
The boundedness of $b_\pw(\cdot,\cdot,\cdot)$, Lemma \ref{hctenrich}.$d$, a triangle inequality, and $\|v-\Pi_0 v\|_{L^{\infty}(\O)}\lesssim h_{\max}\| v\|_{1,\infty}\lesssim h_{\max}\| v\|_{2+\alpha} $ lead to 
\begin{align*}
T_8+T_9&=b_\pw(u-I_\M u,(1 -E_\M)u_\M,v)+b_\pw(I_\M u,(1 -E_\M) u_\M,v-\Pi_0v) \nonumber\\
&\lesssim 
\trinl e\trinr_\pw\big(\trinl u-I_\M u\trinr_\pw \trinl v\trinr+h_{\max}\trinl u\trinr\| v\|_{2+\alpha}\big). 
\end{align*}	
Lemma \ref{hctenrich}.$d$ and Lemma \ref{Bound bpw} imply
\begin{align*}
T_{10}+T_{11}&=b_\pw(u-I_\M u,u,(1 -E_\M) v_\M)+b_\pw(I_\M u,u-I_\M u,(1 -E_\M) v_\M)\nonumber\\
&\lesssim 
\trinl u \trinr\trinl  u-I_\M u\trinr_{\pw} \trinl\delta \trinr_\pw.
\end{align*}
Lemma \ref{Bound bpw} and a piecewise Poincar\'e inequality show
\begin{align*}
T_{12}+T_{13}&=b_\pw(u_\M, I_\M u-u,v_\M-v)+b_\pw(u_\M, I_\M u-u,v-\Pi_0v)\nonumber\\
&\lesssim 
\trinl u_\M \trinr_\pw \trinl  u-I_\M u\trinr_{\pw}\big(\trinl \delta \trinl_\pw +h_{\max} \| v\|_{2+\alpha}\big).
\end{align*}
Remark \ref{remInterEnt}, Lemma \ref{hctenrich}.$a$, a generalised H\"older inequality, interpolation estimate \cite[(6.1.5)]{ciarlet1978finite}, and Lemma \ref{hctenrich}.$d$ lead to
\begin{align*}
T_{14}&=b_\pw(u_\M,u_\M,v-E_\M I_\M v)=b_\pw(u_\M,u_\M,(v-E_\M I_\M v)-I_\M(v-E_\M I_\M v)) \\
&\le \trinl u_\M \trinr_\pw^2\| (v-E_\M I_\M v)-I_\M(v-E_\M I_\M v)\|_{L^\infty(\O)}\\
&\lesssim  h_{\max}\trinl u_\M \trinr_\pw^2\trinl v-E_\M I_\M v \trinr_\pw \lesssim  h_{\max}\trinl u_\M \trinr_\pw^2\trinl v- I_\M v \trinr_\pw.
\end{align*}
The Cauchy-Schwarz inequality, Lemma \ref{Erru_Tandu_A}, Lemma \ref{Morley_Interpolation}.$b$-$c$, a triangle inequality and Lemma \ref{hctenrich}.$d$ imply
\begin{align*}
T_{15}=&(f,u-\widehat{u}_A+u_A-E_\M u_\M-I_\M e)_{L^2(\O)}=(f,u_A-\widehat{u}_A)_{L^2(\O)}\nonumber\\
&+(f,(1-I_\M)(u-E_\M u_\M))_{L^2(\O)}\lesssim  h_{\max}^2\|f\|_{L^2(\O)}\big(\| u\|_{2+\alpha}+\trinl e \trinr_{\rm pw}\big).
\end{align*}
{\bf Step 5 of the proof} estimates the $b(\bullet,\bullet,\bullet)$ terms on the left-hand side of \eqref{t1t2}. For any $\cT\in \mathbb{T}(\epsilon)$, it is clear from Step 1 that $0 <\mu_e:= C_{\rm dS}M(f, \chi) < \sqrt{2}-1$. It is elementary to see that this implies $\mu_e < \frac{1}{\mu_e} -2$. Hence there exist a real $\gamma$ with $\mu_e < \gamma< \frac{1}{\mu_e} -2$. Lemma \ref{Bound bpw}, Young inequality, and {\em a priori} bound from Theorem \ref{thm.contsdep} imply
\begin{align*}
&b_\pw(e,u,\delta)-b_\pw(e,e,v)\le C_{\rm dS}\trinl u \trinr \trinl e \trinr_\pw \trinl \delta \trinr_\pw+C_{\rm S} \trinl e \trinr_\pw^2\trinl v \trinr\nonumber\\
&\qquad \le \mu_e \big(\sqrt{2}\trinl e \trinr_\pw \trinl \delta \trinr_\pw+ \trinl e \trinr_\pw^2 \big) \le \mu_e \big((1+ \frac{\gamma}{2})\trinl e \trinr_\pw^2 + \gamma^{-1}  \trinl \delta \trinr_\pw^2 \big).
\end{align*}
{\bf Step 6 of the proof} combines all the estimates $T_3,\dots,T_{15}$ of \eqref{t1t2} with \eqref{estt1t2}. Set positive constants $\tau_1:=C(1-\mu_e (\gamma+2))$ and $\tau_2:= 2C(1-\mu_e \gamma^{-1})$ and deduce
\begin{align*}
\tau_1\trinl e \trinr_\pw^2+ \tau_2\trinl \delta \trinr_\pw^2&\le  h_{\max}^{2}+\trinl u-I_\M u\trinr_{\rm pw}^2+\trinl v-I_\M v\trinr_{\rm pw}^2+(\trinl e\trinr_\pw +\trinl \delta\trinr_\pw )\text{RHS},
\end{align*}
with some universal constant $C\approx 1$ from the estimates of $T_3,\dots,T_{15}$. The Young inequality for the last term on the right-hand side of the above estimate imply the assertion
\begin{align*}
\tau_1\trinl e \trinr_\pw^2+ \tau_2\trinl \delta \trinr_\pw^2&\le 2(h_{\max}^{2}+\trinl u-I_\M u\trinr_{\rm pw}^2+\trinl v-I_\M v\trinr_{\rm pw}^2)+\frac{\tau_1+\tau_2}{\tau_1\tau_2} {\text{RHS}}^2.
\end{align*}
This concludes the proof.
\end{proof} 

\section{Implementation Procedure and Numerical Results}\label{sec.numericalresults}
The first subsection is devoted to the implementation procedure to solve the discrete problem \eqref{Morleywform}. Subsections \ref{eg.square} and \ref{eg.Lshaped} deal with the results of the numerical experiments and is followed by a subsection on conclusions.
\subsection{Implementation procedure}\label{sec.implementation}
\noindent  
The solution $(u_\M,v_\M)$ to \eqref{Morleywform} is computed using a combination of Newtons' method \cite{mallik2016conforming} in an inner loop and primal dual active set strategy \cite{hintermuller2002primaldual} in an outer loop. The initial value $u_\M^{init}$ for $u_\M$ in the iterative scheme is the discrete solution to the biharmonic obstacle problem: seek $u_\M^{init} \in K(\chi,\T)$ such that
\begin{equation}
	a_\pw(u_\M^{init},u_\M^{init}-\varphi_\M)\le (f,u_\M^{init}-\varphi_\M)_{L^2(\O)}   \; \; \fl \varphi_\M\in K(\chi,\T)\label{MorleybiharmonicOP}
\end{equation}
with $K(\chi,\T)$ from Subsection \ref{sec.preliminaries}. Since \eqref{MorleybiharmonicOP} is \eqref{Morleywforma} without the trilinear term, $u_\M^{init}$ is computed with the same algorithm below without the inner loop for the nonlinearity. This is shown in Figure \ref{flowchart} and the general case is described in the sequel. 
\medskip

\noindent
Recall $\M(\T)$, $\mathcal V$ and $\mathcal E$ from Subsection \ref{sec.preliminaries}. Let $p \in \mathcal V(\O)$ and $(\varphi_1,\dots, \varphi_N)$ be the node- and edge-oriented basis functions in $\M(\T)$, $N:=|\mathcal V(\O)|+|\mathcal E(\O)|$; see \cite{carstensen2014discrete} for details and basic algorithms for the Morley FEM. Let $u_\M=\sum_{j=1}^{N}{{\alpha}}_j\varphi_j$ and $v_\M=\sum_{j=1}^{N}{{\beta}}_j\varphi_j$ with ${\boldsymbol{\alpha}}=(\alpha_1,\dots,\alpha_N)$ and ${\boldsymbol{\beta}}=(\beta_1,\dots,\beta_N).$

\smallskip

\noindent $ \textit{ Primal dual active set strategy}. $ 
\begin{itemize}
	\item Choose initial values $(u_\M^0,v_\M^0)=(u_\M^{init},0)$. 
	\item In the $m$th step of the primal dual active set algorithm, find the active $Ac^m$ and inactive $In^m$ sets defined by
	\begin{subequations}\label{active.inactive}
		\begin{align}
			Ac^{m}=\{p \in \mathcal V(\O): {\boldsymbol{\lambda}}^{m-1}(p)+\chi(p)-u_\M^{m-1}(p)&\le 0\},\\
			In^{m}=\{p \in \mathcal V(\O): {\boldsymbol{\lambda}}^{m-1}(p)+\chi(p)-u_\M^{m-1}(p)&>0\}.
		\end{align}
	\end{subequations} 
	Since the degrees of freedom also involve the midpoints of the interior edges, let $I^m:=In^m \cup \mathcal{E}(\O)$ be the union of $In^m$ and the midpoints of interior edges.
	
	\smallskip
	\noindent $(a) \textit{ Non-linear system}.$ 
	\begin{itemize}
		\item The matrix formulation corresponding to \eqref{Morleywform} can be expressed as block matrices in term of active and inactive sets and load vector $\bf{F}$ on the right-hand side.
		\item Impose $u_\M^m(Ac^m)=\chi(Ac^m)={\boldsymbol{\alpha}}^m(Ac^m)$ and ${\boldsymbol{\lambda}}^m(I^m)=0$. From here on, superscript $m$ is omitted and $({\boldsymbol{\alpha}}(I^m),{\boldsymbol{\lambda}}(Ac^m),{\boldsymbol{\beta}}(Ac^m),{\boldsymbol{\beta}}(I^m))$ is replaced by $({\boldsymbol{\alpha}}_2, {\boldsymbol{\lambda}}_1,{\boldsymbol{\beta}}_1,{\boldsymbol{\beta}}_2)$ for notational convenience. 
		\item After substitution of the known values ${\boldsymbol{\alpha}}(Ac)$ and ${\boldsymbol{\lambda}}(I)=0$, the discrete problem reduces to a smaller non-linear system of equations ${\bf{G}}({\boldsymbol{\alpha}}_2, {\boldsymbol{\lambda}}_1,{\boldsymbol{\beta}}_1,{\boldsymbol{\beta}}_2)={\bf{0}}$. 

	\end{itemize} 
	
	\smallskip 
	\noindent $(b) \textit{ Newtons iteration with initial guess } {\bf{ S}}^0=({\boldsymbol{\alpha}}_2^0, {\boldsymbol{\lambda}}_1^0,\boldsymbol{0},\boldsymbol{0}).$ 
	\begin{itemize}
		\item For ${\bf{ S}}^{n}:=({{\boldsymbol{\alpha}}}_2^{n},{\boldsymbol{\lambda}}_1^{n}, {\boldsymbol{\beta}}_1^{n},
		{\boldsymbol{\beta}}_2^{n})$,
		do ${\bf{S}}^{{n}+1} = {\bf{S}}^{n} - {\bf{\Delta}\bf { S}}^{n}$ for the solution ${\bf{\Delta} \bf{S}}^{n}$ of the linear system of equations ${\bf{J_{\bf{G}}}}({\bf{S}}^{n}){\bf{\Delta}S}^{n}={\bf{G}}({\bf{S}}^{n})$ with $\bf{J_G}$ is the Jacobian matrix of ${{\bf{G}}}$ until $\|{\bf{\Delta} {S}}^{n}\|_{l^2(\R^{2N})}$ is less than a given tolerance.

	\end{itemize}
	\item Update $m=m+1$. This primal-dual active strategy iteration procedure terminates when $Ac^m=Ac^{m-1}$ and $I^m=I^{m-1}$. 
\end{itemize}

\medskip \noindent The flowchart below (see, Figure \ref{flowchart}) demonstrates the combined primal-dual active set and Newton algorithms for $\T_0,\T_1,\dots$.
\noindent
\tikzstyle{inputoutput1} = [trapezium, draw, fill=white!20, 
text width=10em, trapezium left angle=60, trapezium right angle=120,text centered, rounded corners, minimum height=1em]
\tikzstyle{inputoutput} = [trapezium, draw, fill=white!20, 
text width=13em, trapezium left angle=60, trapezium right angle=120,text centered, rounded corners, minimum height=1em]
\tikzstyle{decision} = [diamond, draw, fill=white!20, 
text width=4em, text badly centered, node distance=2cm, inner sep=0pt]
\tikzstyle{block} = [rectangle, draw, fill=white!20, 
text width=13em, text centered, rounded corners, minimum height=1em]
\tikzstyle{block1} = [rectangle, draw, fill=white!20, 
text width=15em, text centered, rounded corners, minimum height=1em]
\tikzstyle{line} = [draw, -latex']
\tikzstyle{cloud} = [draw, ellipse,fill=white!20, node distance=3cm,
minimum height=2em]
\begin{figure}[h]
	\begin{tikzpicture}[node distance = 1.3cm, auto]
	\tikzstyle{every node}=[font=\scriptsize]
	\node [inputoutput,node distance=1cm] (init){Initialize $m$, err, $\rho:=1$ and\\ ${\boldsymbol{\alpha}}^0,{\boldsymbol{\beta}}^0, {\boldsymbol{\lambda}}^0, {n}:=0$}; 
	\node [block, below of=init, node distance=1.25cm] (compute) { Assemble the matrices in ${\bf{J_G}}$ and $\bf{F}$;\\ Compute $Ac^{1}$ and $I^1$ from \eqref{active.inactive}};
	\node [decision, below of=compute,node distance=2cm] (decide) { err>$10^{-7}$?};
	\node [block1, right of=decide, node distance=5cm] (set) {${\boldsymbol{\alpha}}^m(Ac^m):=\chi(Ac^m)$, ${\boldsymbol{\lambda}}^m(I^m):=0$; Solve \eqref{MorleybiharmonicOP} for ${\boldsymbol{\alpha}}^m(I^m)$, ${\boldsymbol{\lambda}}^m(Ac^m)$; \\Update err:=$\|\boldsymbol{\alpha}^m-\boldsymbol{\alpha}^{m-1}\|_{\infty}$, $m:=m+1$};
	\node [decision, right of=set, node distance=5cm] (break) {$Ac^{m-1}=Ac^{m}$?};
	\node [block1, below of=decide, node distance=2cm] (set1) {${\boldsymbol{\alpha}}^0:={\boldsymbol{\alpha}}^{m-1}$, ${\boldsymbol{\lambda}}^0:={\boldsymbol{\lambda}}^{m-1}$, $m$, err := 1;\\Compute $Ac^{1}$ and $I^1$ from \eqref{active.inactive}};
	\node [decision, below of=set1,node distance=2cm] (decide1) {err>$10^{-7}$?}; 
	\node [block, right of=decide1, node distance=4.3cm] (set2) {${\boldsymbol{\alpha}}^m(Ac^m):=\chi(Ac^m)$, ${\boldsymbol{\lambda}}^m(I^m):=0$};
	\node [decision, below of=set2,node distance=1.9cm] (decide2) { $\rho>$ $10^{-7}$?};
	\node [block, below of=decide2,node distance=2.2cm] (compute2) { Update ${\bf{S}}^{{n}+1} = {\bf{S}}^{n}-{\bf{\Delta}\bf { S}}^{n}$, $\rho:=\|{\bf{\Delta} {S}}^{n}\|_{l^2(\R^{2N})}$, $n:=n+1$ };
	\node [block, right of=decide2, node distance=4.5cm] (compute3) {Update $\boldsymbol{\alpha}^m(I^m)={\bf{S}}^{n}(1:{\rm length}(I^m))$, err:=$\|\boldsymbol{\alpha}^m-\boldsymbol{\alpha}^{m-1}\|_{\infty}$, $m:=m+1$};
	\node [decision, below of=compute3, node distance=3cm] (break1) {$Ac^{m-1}=Ac^{m}$?};
	\node [block, below of=compute2, node distance=2.3cm] (output){Compute $u_\M$, $v_\M$ using the basis functions, ${\boldsymbol{\alpha}}^{m-1}$, and ${\boldsymbol{\beta}}^{m-1}$};

	\path [line] (init) -- (compute);
	\path [line] (compute) -- (decide);
	\path [line] (decide) -- node [near start] {yes}(set);
	\path [line] (set) -- (break);
	\path [line] (break) |- node[xshift=0.3cm]{no}(0,-2)--(decide.north);
	\path [line] (decide) -- node [near start] {no}(set1);
	\path [line] (break) |- node [near start] {yes}(set1);
	\path [line] (set1) -- (decide1);
	\path [line] (decide1) -- node [near start] {yes}(set2);
	\path [line] (set2) -- (decide2);
	\path [line] (decide2) --node [near start] {yes} (compute2);
	\path [line] (compute2.west) --++(-1,0)|- (decide2.west);
	\path [line] (decide2) -- node [near start] {no}(compute3);
	\path [line] (compute3) -- (break1);
	\path [line] (break1) -| node[xshift=3.5cm] [near start] {no}(decide1);
	\path [line] (break1) |- node [near start] {yes}(output);
	\path [line] (decide1.west)--++(-1,0)|- node[yshift=3.45cm,xshift=0.3cm] [near start] {no}(output);
	\end{tikzpicture}
	\caption{Flowchart for the primal-dual active set strategy with the Newtons method}\label{flowchart}
\end{figure}
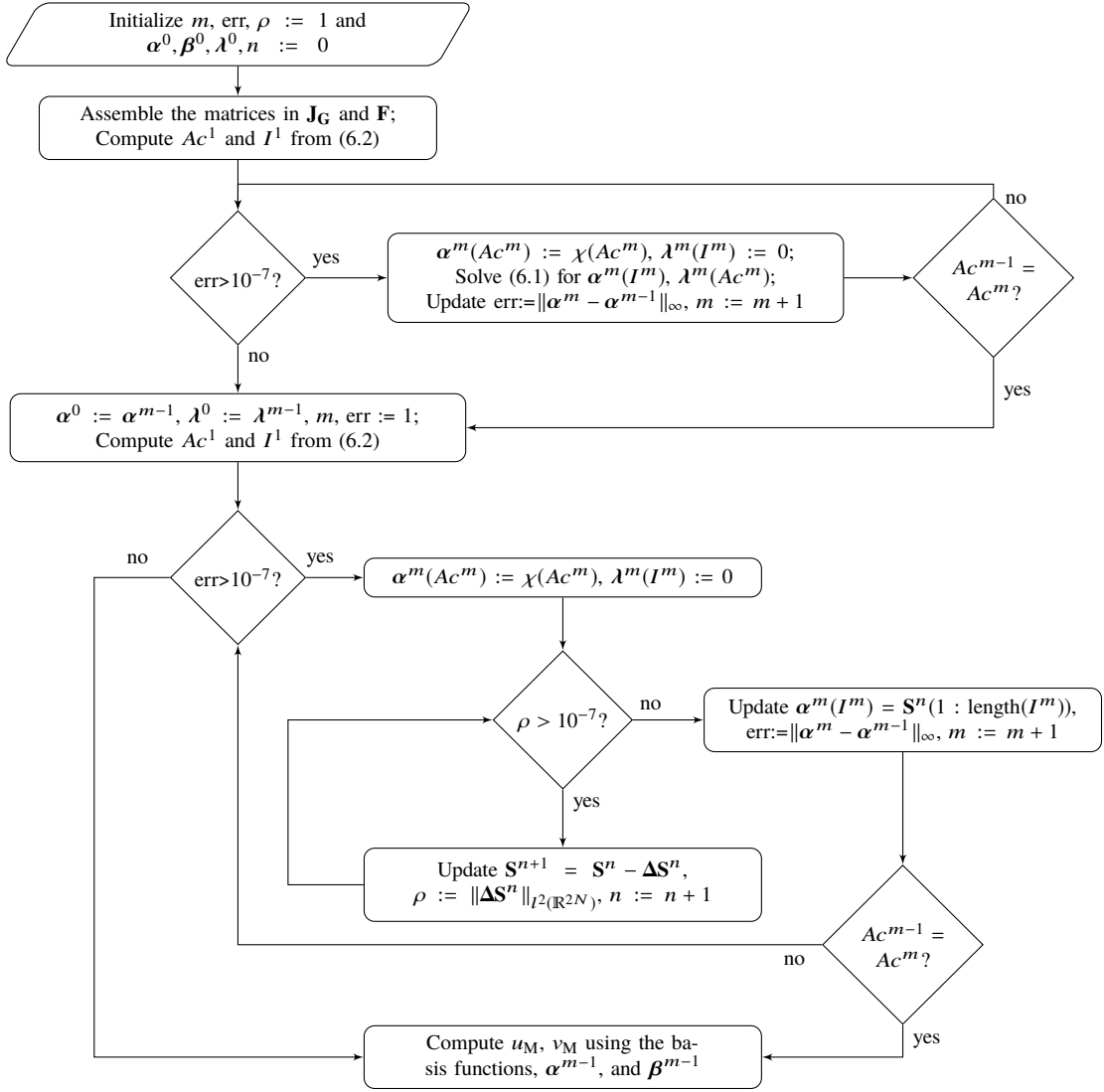

\noindent We observe in the examples of this paper (for small $f$ and $\chi$) that at each iteration of primal dual active set algorithm, the Newtons' method converges in four iterations. In this case, we notice that the error between final level and the previous level of the nodal and edge-oriented values in Euclidean norm of $\R^{2N}$ is less than $10^{-9}$. Also, the primal dual active set algorithm terminates within three steps.
\medskip

\noindent 
The uniform mesh refinement has been done by red-refinement criteria, where each triangle is subdivided into four sub-triangles by connecting the midpoints of the edges. Let $u_\ell$ (resp.$v_\ell$) be the discrete solution at the $\ell$th level for $\ell=1,2,3,..$, $L$ and define
$$e_\ell(u):=\trinl u_L-u_\ell \trinr_{\pw} \mbox{ and }  \widetilde{e}_\ell(u):=\max_{p \in \mathcal{V}_{\ell}} |u_L(p)-u_\ell(p)|.$$
$$\Big(\mbox{resp. }e_\ell(v):=\trinl v_L-v_\ell \trinr_{\pw} \mbox{ and }  \widetilde{e}_\ell(v):=\max_{p \in \mathcal{V}_{\ell}} |v_L(p)-v_\ell(p)|\Big).$$
The order in $H^2$ norm (resp. $L^\infty$ norm) at $\ell$th level for $u$ is approximated by ${\rm{EOC(\ell)}}:={\rm{log}}\big({e_\ell(u)}/{e_{L-1}(u)}\big)/{\rm{log}}(2^{L-1-\ell})$ (resp. ${\rm{log}}\big({\widetilde{e}_\ell(u)}/{\widetilde{e}_{L-1}(u)}\big)/{\rm{log}}(2^{L-1-\ell})$) for $\ell=1,\dots,L-2$. The discrete coincidence set is $\mathcal{C}_\ell:=\big\{p \in \mathcal{V}_\ell;u_\ell(p)-\chi(p)\le\widetilde{e}_\ell(u) \big\}$ for the level $\ell$.

\subsection{The von K\'{a}rm\'{a}n obstacle on the square domain}\label{eg.square}
Let the computational domain be $\Omega=0.5(-1,1)^2$. The criss-cross mesh with $h=1$ is taken as the initial triangulation $\cT_0$ of $\Omega$.  Consider the \vk\, obstacle problem \eqref{wform} for the three examples in this section. Examples 1 and 2 take $f=0$ with different obstacles; Example 3 concerns a significantly huge function $f$. 


%
%

\medskip
\noindent {\bf Example 1 (Coincidence set with non-zero measure). }Let the obstacle be given by $\chi(\boldsymbol{x})=1-5|\boldsymbol{x}|^2+|\boldsymbol{x}|^4, \boldsymbol{x} \in \Omega=0.5(-1,1)^2.$ This example is taken from \cite{brenner2013morley}. The discrete coincidence $\mathcal{C}_6$ and $\mathcal{C}_7$ are displayed in Figure \ref{fig.eg1}. Since $\Delta^2 \chi =64>0$ in this example, it is known from \cite[Section 8]{caffarelli1979obstacle} that the non-coincidence set $\O \setminus \mathcal{C}$ is connected. This behaviour of the non-coincidence set can be seen in Figure \ref{fig.eg1} for levels 6 and 7.
\begin{table}[h!!]  
	\caption{\small{Convergence results for Example 1 on the square domain}}
	{\small{\scriptsize
			\begin{center}
				\begin{tabular}{ ||c|c||c|c||c|c||c | c || c|c||c|c||}
					\hline
					$\ell$ &$h$     & $  \widetilde{e}_\ell(u)$ & EOC &  $  \widetilde{e}_\ell(v)$ & EOC &$e_\ell(u)$ & EOC  &$e_\ell(v)$ & EOC  \\	
					\hline
					1&0.5000&	0.013222&1.2098& 0.125162&1.9151& 16.496069&0.7666& 1.409870&0.9561\\
					2&0.2500& 0.013222&1.5123&0.045884& 2.0319& 12.963642&0.8714&1.025239& 1.0802\\ 
					3&0.1250&0.011327&1.9419& 0.012143&2.0699 &8.621491 &0.9657&0.493374&1.0885\\
					4&0.0625&0.003404&2.0456& 0.003205&2.1440  &4.927900 &1.0450& 0.235687&1.0999\\ 
					5&0.0313&0.000909&2.1862 & 0.000808&2.3000&2.541191& 1.1345& 0.114679& 1.1605\\
					6&0.0156&0.000200&-&0.000164&-& 1.157459& -&0.051304 &- \\
					\hline	
				\end{tabular}
			\end{center}
		}}\label{table.eg1}
	\end{table}
	\begin{figure}[h]
		\begin{center}
			\begin{minipage}[H]{0.45\linewidth}
				{\includegraphics[width=13cm]{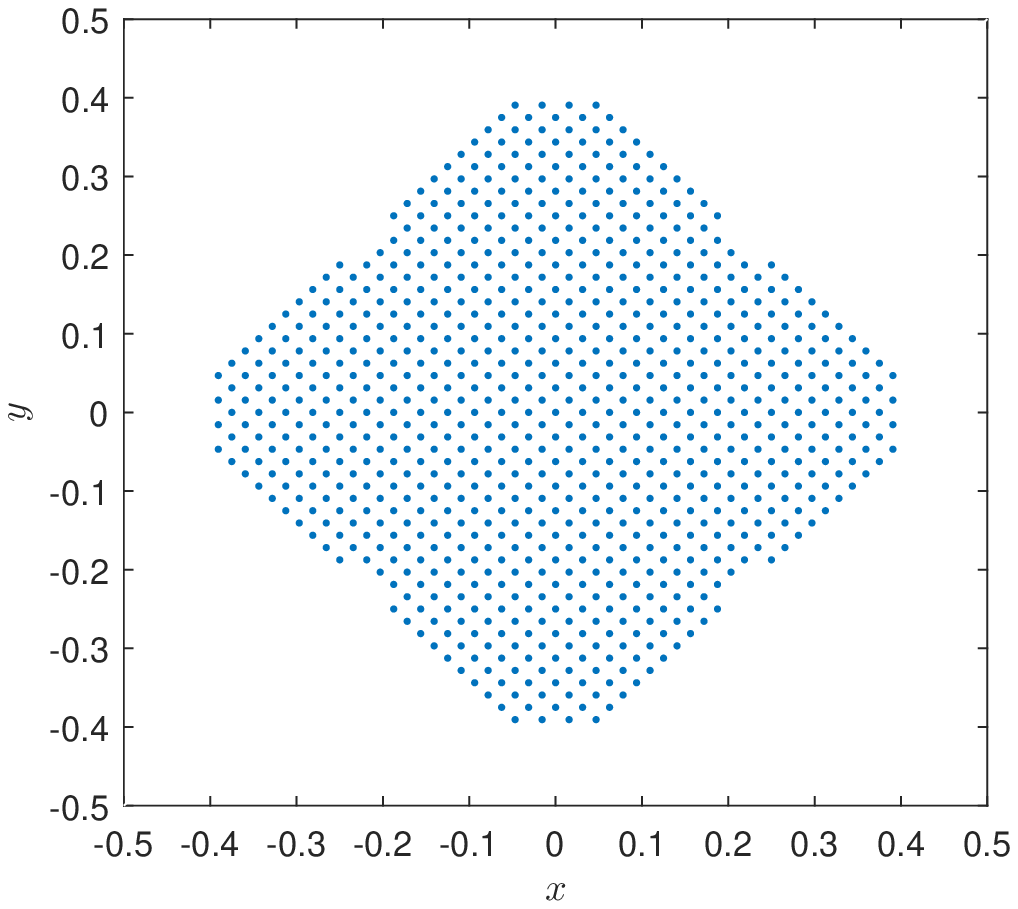}}
			\end{minipage}
			\begin{minipage}[H]{0.45\linewidth}
				{\includegraphics[width=13cm]{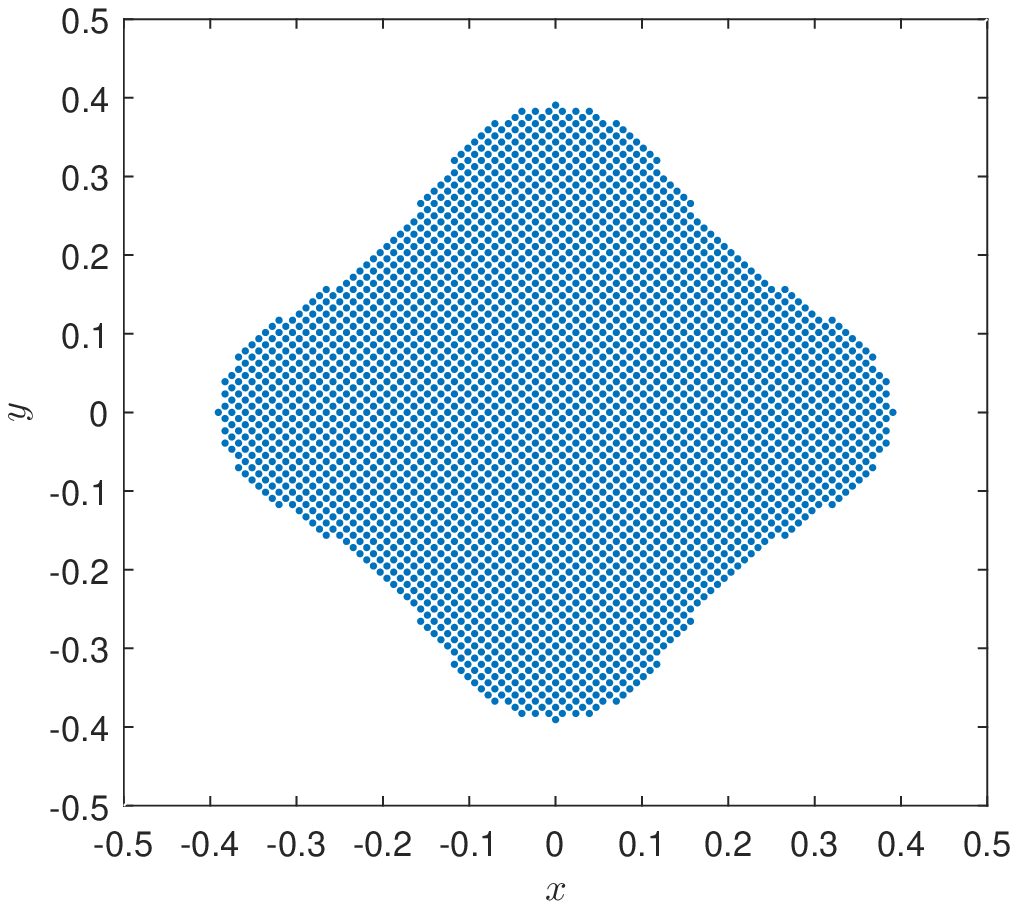}}
			\end{minipage}
			\caption{$\mathcal{C}_6$ and $\mathcal{C}_7$, Example 1}\label{fig.eg1}
		\end{center}
	\end{figure}	
	
	\noindent Table \ref{table.eg1} shows  errors and orders of convergence for the displacement $u$ and the Airy-stress function $v$. Observe that linear order of convergences are obtained for $u$ and $v$ in the energy norm, and quadratic order of convergence in $L^\infty$  norm. These numerical order of convergence in energy norm clearly matches the expected order of convergence given in Theorem \ref{thm.err}. Though the theoretical rate of convergence in $L^\infty$ norm is not analysed, the numerical rates are obtained similar to that in \cite{brenner2013morley} for the biharmonic obstacle problem.
	
	\medskip
	
	\noindent {\bf Example 2. (Coincidence set with zero measure) } In this example taken from \cite{brenner2013morley}, $\chi(\boldsymbol{x})=1-5|\boldsymbol{x}|^2-|\boldsymbol{x}|^4, \boldsymbol{x} \in \Omega=0.5(-1,1)^2$ with $\Delta^2 \chi =-64<0$ in $\O$, and hence, the interior of the coincidence set must be empty, since $\Delta^2 u$ (in the sense of distributions)
	is a nonnegative measure (\cite[Section 8]{caffarelli1979obstacle}). This can be observed in the pictures of the discrete coincidence sets displayed in Figure \ref{fig.eg2}.
	\begin{table}[h!!] 
		\caption{\small{Convergence results for Example 2 on the square domain}}
		{\small{\scriptsize
				\begin{center}
					\begin{tabular}{ ||c|c||c|c||c|c||c | c || c|c||c|c||}
						\hline
						$\ell$ &$h$     & $  \widetilde{e}_\ell(u)$ & EOC & $  \widetilde{e}_\ell(v)$ & EOC &$e_\ell(u)$ & EOC &$e_\ell(v)$ & EOC  \\	\hline
						1&0.5000&0.028792&1.4917&0.136864& 1.8793 & 15.510398&0.7999& 1.493256&0.9636\\
						2&0.2500&0.028792&1.8646&0.050539&1.9898& 11.837363&0.9024& 1.070278 &1.0843\\ 
						3&0.1250&0.009347& 1.9451& 0.014530&2.0535 &7.563740&0.9878& 0.510661&1.0899\\ 
						4&0.0625&0.003116&2.1252&0.003980&2.1462& 4.210097& 1.0591& 0.244868& 1.1047\\ 
						5&0.0312&0.000843&2.3636&0.001030&2.3427 & 2.138703& 1.1411&0.118649& 1.1642\\ 
						6&0.0156&0.000164&-&0.000203&-& 0.969687&-&0.052944&-\\
						\hline				
					\end{tabular}
				\end{center}
			}}\label{table.eg2}
		\end{table}
		\begin{figure}[h]
			\begin{center}
				\begin{minipage}[H]{0.4\linewidth}
					{\includegraphics[width=12cm]{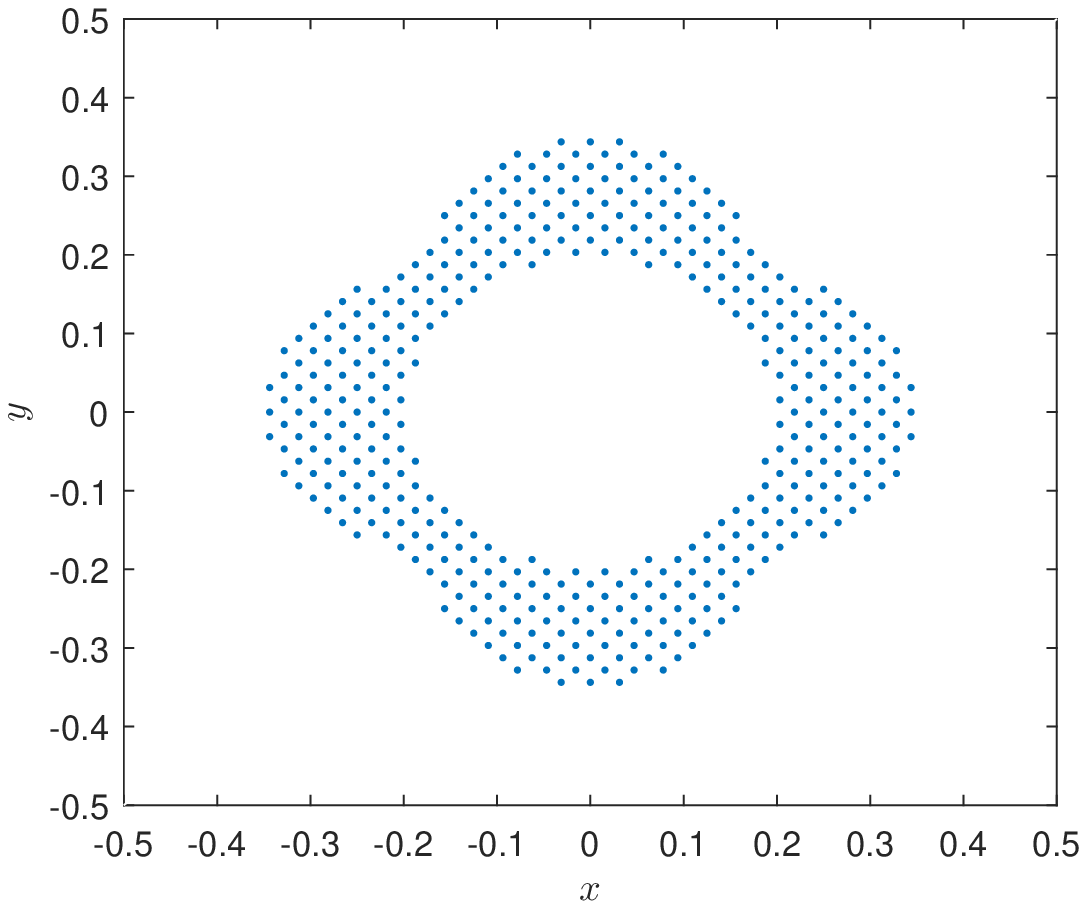}}
				\end{minipage}
				\begin{minipage}[H]{0.4\linewidth}
					{\includegraphics[width=12cm]{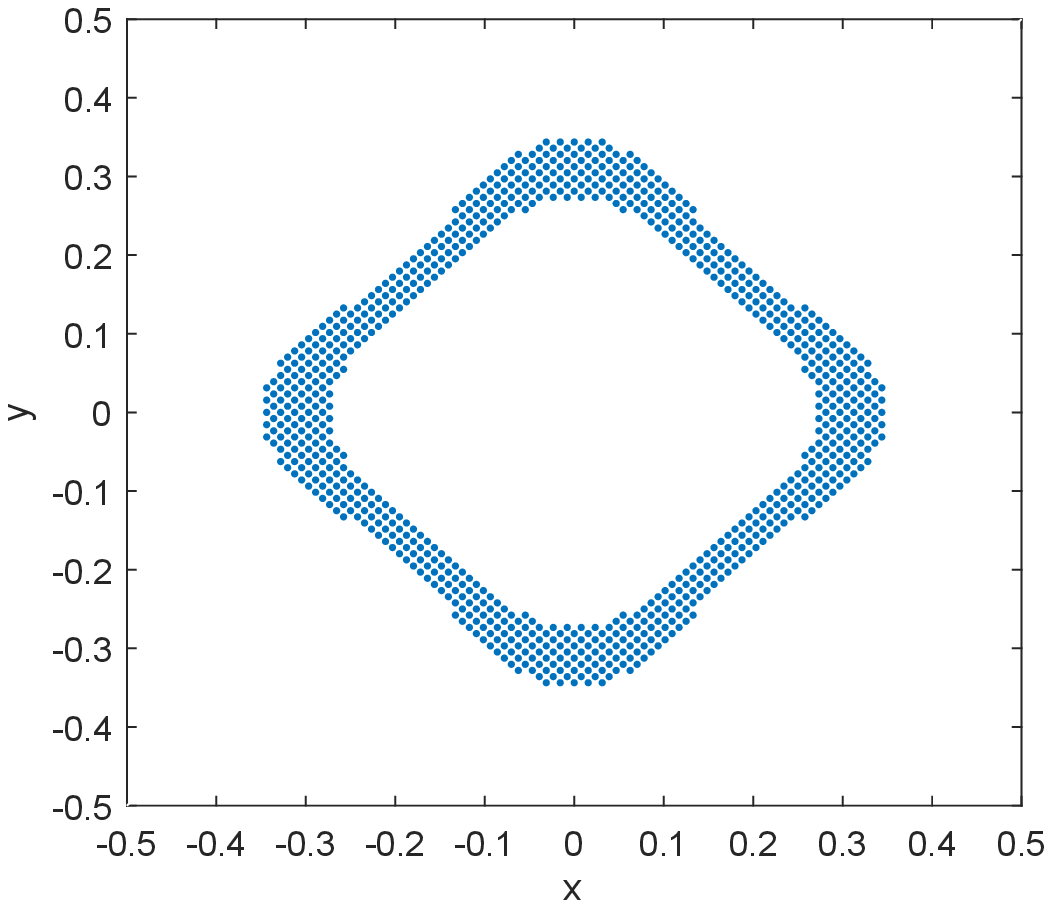}}
				\end{minipage}
				\caption{$\mathcal{C}_6$ and $\mathcal{C}_7$, Example 2}\label{fig.eg2}
			\end{center}
		\end{figure}	
		
		\noindent  The errors and orders of convergence for the displacement and the Airy-stress function are presented in  Table \ref{table.eg2}.  The orders of convergence results are similar to those obtained in Example 1. Note that Examples 1 and 2 are similar except in the sign of the term $|\boldsymbol{x}|^4$ that appears in the obstacle function.

		\medskip
		
		\noindent {\bf Example 3. (Violation of smallness assumption)}	It is interesting to observe that for $\chi$ and $\O$ from Example 1 with the source term $f=(x+3)^2(x-3)^2(y+3)^2(y-3)^2$, the primal dual active set algorithm is not convergent in 100 iterations of the algorithm. Consider $w(x,y)=(x+0.5)^2(y+0.5)^2(0.5-x)^2(0.5-y)^2 \in H^2_0(\O)$. Then $\frac{\|w\|_{L^2(\O)}}{\trinl w\trinr}=0.0278$ and  $\frac{\|w\|_{L^\infty(\O)}}{\trinl w\trinr}=0.0683$. Since $C_{\rm F}$ (resp. $C_{\rm S}$) is the supremum of  $\frac{\|z\|_{L^2(\O)}}{\trinl z\trinr}$ (resp. $\frac{\|z\|_{L^\infty(\O)}}{\trinl z\trinr}$) for all $z \in H^2_0(\O)$, this implies $C_{\rm F}\ge 0.0278$ (resp. $C_{\rm S}\ge 0.0683$). Use the definition of $M_{}(f,\chi)$ to obtain $C_{S}M_{}(f,\chi)\ge 20.7972$. Therefore the sufficient condition in Theorem \ref{thm.err} is violated.
		
		\smallskip
		\noindent
		For Example 1 with obstacle $\chi$ replaced by $\lambda \chi$, where $\lambda \in \mathbb{R}$, we noticed that the algorithm fails to converge for $\lambda\ge 4$ on $\cT_4$ and $\cT_5$. This illustrates the requirement of smallness assumption on the obstacle for optimal convergence rate.
		%
		%
		%
		\subsection{The von K\'{a}rm\'{a}n obstacle problem on the L-shaped domain}\label{eg.Lshaped}
		\noindent Consider L-shaped domain $\Omega=(-0.5,0.5)^2 \setminus[0,0.5]^2$, $f=0$ and
		$$\chi(\boldsymbol{x})=1-\frac{(x+0.25)^2}{0.2^2}-\frac{y^2}{0.35^2}$$
		as in \cite{brenner2013morley}. Choosing the initial mesh size as $h=0.7071$, the successive red-refinement algorithm computes $\T_1,\dots,\T_5$.
		
		%
		\begin{table}[h!!]   
			\caption{\small{Convergence results for the L-shaped domain}}
			{\small{\scriptsize
					\begin{center}
						\begin{tabular}{ ||c|c||c|c||c|c||c | c || c|c||c|c||}
							\hline
							$\ell$ &$h$     & $  \widetilde{e}_\ell(u)$ & EOC  & $  \widetilde{e}_\ell(v)$ & EOC  &$e_\ell(u)$ & EOC   &$e_\ell(v)$ & EOC  \\	
							\hline
							1&0.3536&0.046700&0.8276&0.141271& 1.8003&23.203954&0.7177& 2.260261&0.9584\\
							2&0.1768&0.021021&0.7196&0.056794& 1.9621&18.313668&0.8431&1.530842&1.0905\\	3&0.0884&0.025796&1.2271&0.017919& 2.1111&11.746209&0.9442& 0.761967& 1.1324\\
							4&0.0442&0.014152&1.5879&0.004655& 2.2774& 6.556709&1.0473& 0.352575&1.1531\\ 
							5&0.0221&0.004708&-&0.000960&-&3.172522&-&0.158538&-\\
							\hline									
							
						\end{tabular}
					\end{center}
				}}\label{table.eg3}
			\end{table}
			\begin{figure}[h!!]
				\begin{center}
					\begin{minipage}[H]{0.4\linewidth}
						{\includegraphics[width=12.65cm]{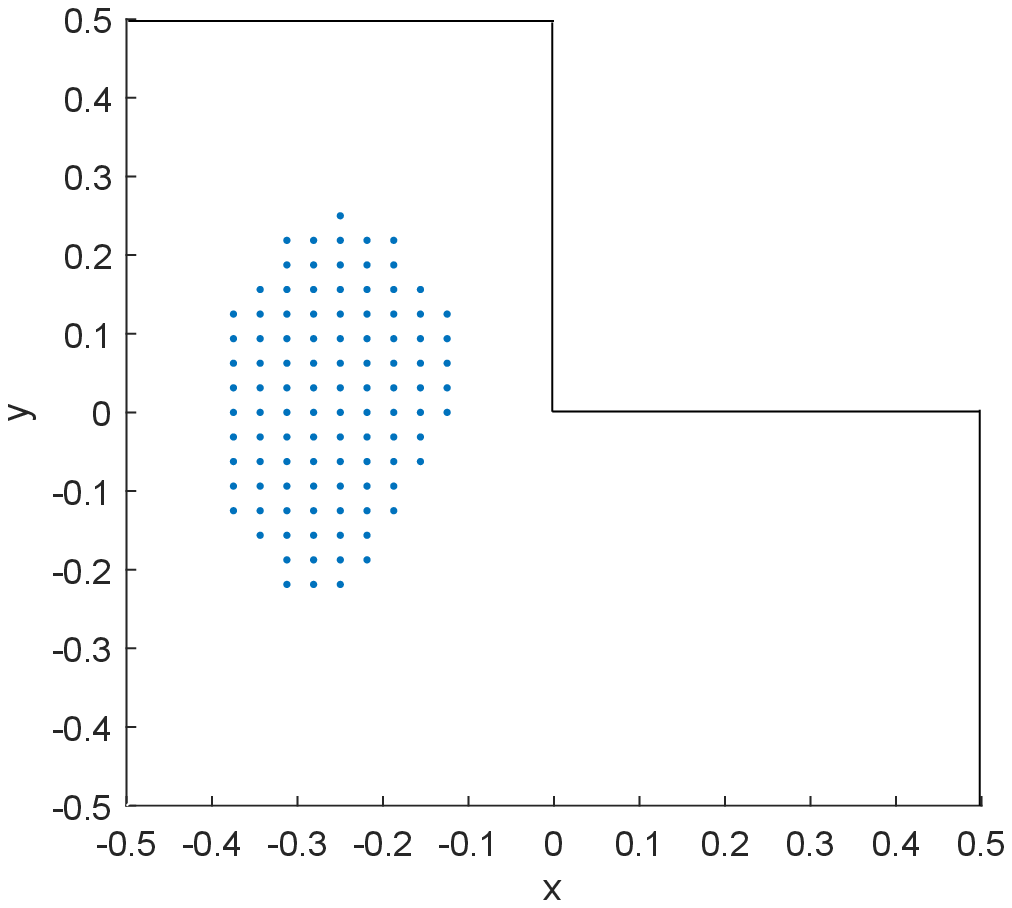}}
					\end{minipage}
					\begin{minipage}[H]{0.4\linewidth}
						{\includegraphics[width=13cm]{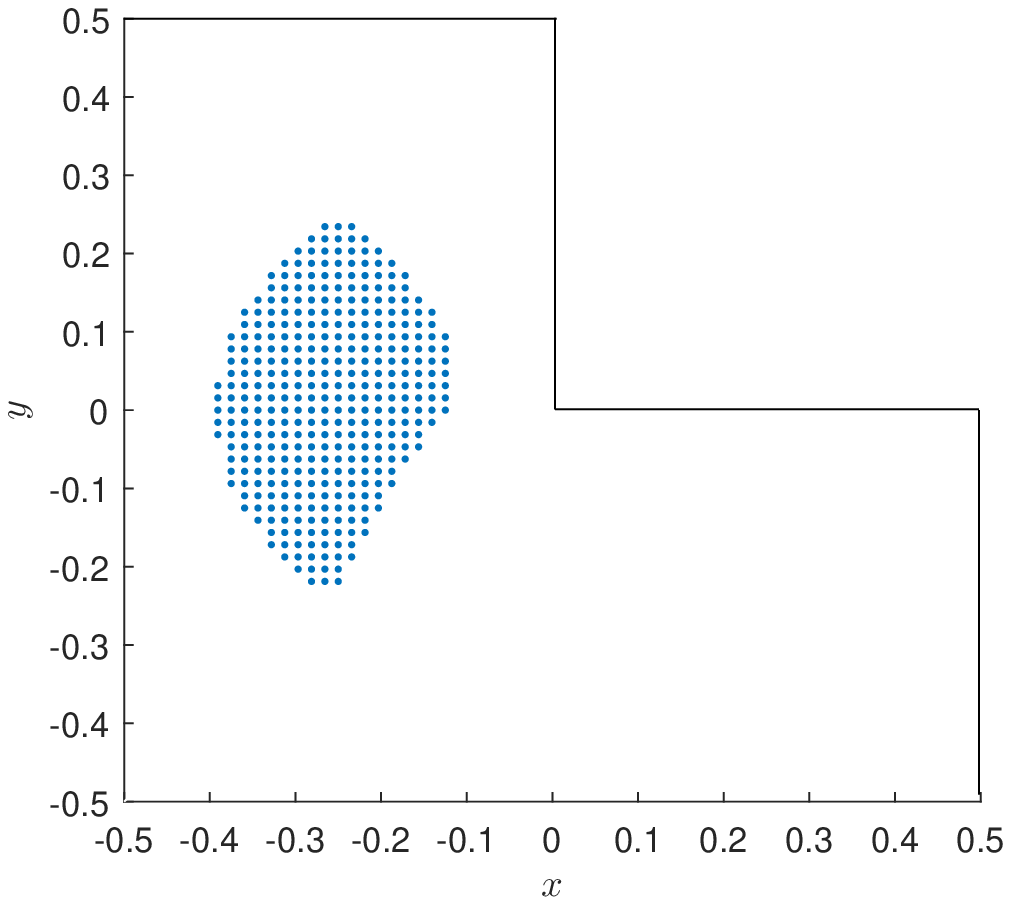}}
					\end{minipage}
					\caption{$\mathcal{C}_5$ and $\mathcal{C}_6$}\label{fig.eg3}
				\end{center}
			\end{figure}
			\noindent  Since $\O$ is non-convex (reduced elliptic regularity $\alpha = 0.5445$, \cite[Example 4]{brenner2013morley}), we expect only sub-optimal order of convergences in  energy norm and $L^\infty$ norm, that is, $\mathcal{O}(h^\alpha)$ convergence rate in the energy norm (see, Theorem \ref{thm.err}). However, linear order of convergence is preserved in the energy norm which indicates that the numerical performance is carried out in the non-asymptotic region. The discrete coincidence sets for last two levels are depicted in Figure \ref{fig.eg3}. The non-coincidence set is connected, which agrees with the result in \cite{caffarelli1979obstacle} since $\Delta^2\chi=0$ in $\O$ in this example. 	\smallskip
			
			\noindent The convergence rates in Table \ref{table.eg3} are not in direct contradiction to Theorem \ref{thm.err} but the reduced elliptic regularity suggests a lower rate $\alpha = 0.5445$ for L-shaped domain. A similar observation is in \cite[Table 5.5]{brenner2013morley} with orders of convergence $\approx$ 0.8 (resp. 1) for energy (resp. $L^\infty$) norm. In \cite{brenner2013morley}, the numbers are computed with the alternative definitions for error $e_\ell(u):=\trinl u_{\ell-1}-u_\ell \trinr_{\pw}$  (resp. $\widetilde{e}_\ell(u):=\max_{p \in \mathcal{V}_{\ell-1}} |u_{\ell-1}(p)-u_\ell(p)|$) and order of convergence ${\rm{EOC(\ell)}}:={\rm{log}}\big({e_{\ell-1}(u)}/{e_{\ell}(u)}\big)/{\rm{log}}(2)$       (resp. ${\rm{log}}\big({\widetilde{e}_{\ell-1}(u)}/{\widetilde{e}_{\ell}(u)}\big)/{\rm{log}}(2)$). With these definitions, undisplayed numerical experiments confirm the numbers displayed in \cite[Table 5.5]{brenner2013morley} precise up to the last digit. This numerical experiment suggests that our implementation is at least consistent with the one in \cite{brenner2013morley}. One possible explanation is that the corner singularity affects the asymptotic convergence rate for very small mesh-sizes only. This is known, for instance, for the L-shaped domain and the Poisson model problem with constant right hand side in the Courant ($\mathcal{P}_1$ conforming) finite element method. The expected rate 2/3 is visible only beyond $2\times 10^6$ triangles with far better empirical convergence rates before that. 
			\subsection{Conclusions}	\label{conclusion}
			The numerical results for the Morley FEM in the von K\'{a}rm\'{a}n obstacle problem are presented for square domain and L-shaped domain in Sections \ref{eg.square} and \ref{eg.Lshaped}. The outputs obtained for the square domain confirm the theoretical rates of convergence given in Theorem \ref{thm.err} for $\alpha=1$. Example 3 in Section \ref{eg.Lshaped} illustrates the requirement of smallness assumption on the obstacle for optimal convergence rate. 
			For the L-shaped domain, we expect reduced convergence rates in energy and $L^\infty$ norms from the elliptic regularity. However, linear order of convergence is preserved in the energy norm which indicates that the numerical performance is carried out in the non-asymptotic region.

			\medskip
			\noindent
			{\bf Acknowledgements.} The authors thankfully acknowledge the support from the MHRD SPARC project (ID 235) titled "The Mathematics and Computation of Plates" and the second to fifth authors also thank the hospitality of the Humboldt-Universit\"at zu Berlin for the corresponding periods 1st June 2019-31st August 2019 (second and fifth authors), 24th June 2019-30th June 2019 (third author), and 1st July 2019-31st July 2019 (fourth author). 
			
			\bibliographystyle{plain}
			\bibliography{vKeBib}

\end{document}